\newcommand{\lf}{\lfloor}
\newcommand{\rf}{\rfloor}
\newtheorem{thm}{Theorem}[section]
\newtheorem {asp}{Assumption}[section]
\newtheorem{lm}{Lemma}[section]
\theoremstyle{definition}
\newtheorem*{conv*}{Convention}
\theoremstyle{remark}
\numberwithin{equation}{section}
\newcommand{\eps}{\varepsilon}
\newcommand{\M}{\mathcal{M}}
\newcommand{\K}{\mathcal{K}}
\newcommand{\F}{\mathcal{F}}
\newcommand{\E}{\mathbb{E}}
\newcommand{\N}{\mathbb{N}}
\newcommand{\PP}{\mathbb{P}}
\newcommand{\R}{\mathbb{R}}
\newcommand{\cd}{(\cdot)}
\numberwithin{equation}{section}
\newcommand{\1}{\boldsymbol{1}}
\newcommand{\bed}{\begin{displaymath}}
\newcommand{\eed}{\end{displaymath}}
\newcommand{\bea}{\bed\begin{array}{rl}}
\newcommand{\eea}{\end{array}\eed}
\newcommand{\ad}{&\!\!\!\disp}
\newcommand{\aad}{&\disp}
\newcommand{\barray}{\begin{array}{ll}}
\newcommand{\earray}{\end{array}}
\def\disp{\displaystyle}
\def\trace{\text{trace}}
\def\bar{\overline}
\def\a.s{\text{\;a.s.\;}}
\begin{document}
\title{Limit Cycles of Dynamic Systems under Random Perturbations
with Rapid Switching and Slow Diffusion: A Multi-Scale Approach}
\author{Dang Hai Nguyen,\thanks{Department of Mathematics, Wayne State University, Detroit, MI
48202, USA,
dangnh.maths@gmail.com.
Research of this author was supported in part by the National Science Foundation under Grant DMS-1207667.} \and
Nguyen Huu Du,\thanks{Department of Mathematics, Mechanics and
Informatics, Hanoi National University,
 334 Nguyen Trai, Thanh Xuan, Hanoi Vietnam, dunh@vnu.edu.vn. This research was
supported in part by
NAFOSTED
n$_0$ 101.02 - 2011.21.}
\and
 George Yin\thanks{Corresponding author: Department of Mathematics, Wayne State University, Detroit, MI
48202, USA, gyin@math.wayne.edu. Research of this author was supported in part by the National Science Foundation under Grant DMS-1207667.}}
\maketitle

\begin{abstract}
 This work is devoted to examining qualitative properties of dynamic systems, in particular, limit cycles of 
stochastic differential equations with both rapid switching and small diffusion.
The systems are featured by multi-scale formulation, highlighted
by the presence
 of two small parameters $\eps$ and $\delta$.
 Associated with the underlying systems, there are averaged or limit systems.
 Suppose that for each pair of the parameters, the solution of the corresponding equation has an invariant probability measure $\mu^{\eps,\delta}$,
and that the averaged equation has a limit cycle in which there is an averaged occupation measure $\mu^0$ for the averaged equation. Our main
effort is to prove that $\mu^{\eps,\delta}$  converges weakly to $\mu^0$ as $\eps \to 0$ and $\delta \to 0$ under suitable conditions.
Moreover, our
results are applied to a stochastic predator-prey model together with
numerical examples for demonstration.

\bigskip
\noindent {\bf Keywords.} Invariant probability measure;  hybrid diffusion, rapid switching; limit cycle; predator-prey model.

\bigskip
\noindent{\bf Mathematics Subject Classification.} 34C05, 60H10, 92D25.

\end{abstract}

\newpage

\section{Introduction}\label{sec:int}
It is
widely recognized that the traditional dynamical systems
 given by deterministic differential equations are
 often inadequate to model
  many natural phenomena because more often than not the systems are  affected by a variety of
  random perturbations.
  For this reason,
  various random noise processes
  have  been taken into consideration.
  Among them, diffusion processes are one of the most popular models.
 If the intensity of the diffusion is small,
   the diffusion process can be approximated by an ordinary differential
  equation.
Consider  a stochastic differential equation
 \begin{equation}\label{eq1.1}			
dx^\eps(t)=f(x^\eps(t))dt+\sqrt{\eps}\sigma(x^\eps(t))dW(t)
\end{equation}
for appropriate functions $f\cd$ and $\sigma\cd$,
where $W(\cdot)$ is a standard Brownian motion and $\eps$ is a small parameter.
It is intuitive that as $\eps\to 0$, \eqref{eq1.1} should have a limit that is
represented by a purely deterministic differential equation.
Building on
using averaging ideas,
 in \cite{WF},
  Fleming considered the associated
 asymptotic expansions of the expectation $\E_x\Phi(x^\eps(t))$ for a suitable
 function $\Phi\cd$
  under appropriate conditions.
 If the process $x^\eps(t)$ has a unique ergodic measure $\mu^\eps$ for each $\eps>0$ and  the origin of the corresponding deterministic equation
\begin{equation}\label{eq1.2}
dx=f(x)dt,
\end{equation}
is a globally asymptotic equilibrium point,
Holland \cite{CH1} established asymptotic expansions of the expectation of
 the underlying functionals with respect to the unique ergodic measure $\mu^\eps$.
 Furthermore, in  \cite{CH}, he considered the case that
 \eqref{eq1.2} has an asymptotically stable limit cycle and proved the weak convergence of the unique ergodic measure
 $\mu^\eps$ of $x^\eps(t)$ to the stationary distribution of \eqref{eq1.2} concentrated on the limit cycle.

In recent years, to enlarge the applicability, resurgent efforts have been devoted to modeling dynamic systems  in which
 continuous dynamics and discrete events coexist.
  Such  ``hybrid'' systems
 arise from
 traditional applications in engineering, operations research,  biological, and physical sciences
 as well as from emerging applications in wireless
communications,
internet traffic modeling, and financial engineering
among others; see \cite{YZ} and references therein.
As a result,
 switching diffusion processes have received much attention lately.
For such models, it is important to consider perturbed dynamic systems similar to that of the aforementioned paragraphs.
In the literature,
to take advantages of the time-scale separation,
Simon
and Ando \cite{SimonA} introduced the
so-called hierarchical decomposition and aggregation as well as nearly decomposable
models with economics applications; Sethi and
Zhang \cite{SethiZ94} initiated the study of near-optimal controls
for flexible manufacturing systems. A multi-scale approach with applications to computing and  Monte
Carlo simulation was treated in Liu \cite{Liu};
 applications of controlled dynamic systems can also be found in \cite{HYZ} and references therein.
 Recently,  effort has also been devoted to treat
 non-autonomous lattice systems with switching in Han and Kloeden \cite{HK16}.
 Related works have also been devoted to attraction, stability, and robustness
 of functional stochastic differential equations in Wu and Hu \cite{WH11} and to
 chemical reaction systems with two-time scales in Han and Najm \cite{HN14}.

In this paper, we consider dynamic systems represented by switching diffusions, where
the switching is rapidly varying whereas the diffusion is slowly changing.
To be more precise, let $(\Omega, \F, \{\F_t\}, \PP)$ be a filtered probability space satisfying the usual condition.
Consider the process
\begin{equation}\label{eq2.1}
dX^{\eps,\delta}(t)=f(X^{\eps,\delta}(t), \alpha^\eps(t))dt+\sqrt{\delta}\sigma(X^{\eps,\delta}(t), \alpha^\eps(t))dW(t)
, \ X^\eps(0)=x,\end{equation}
 where $W(t)$ is an $m$-dimensional
 standard Brownian motion,  $\alpha^\eps(t)$ is
 a Markov chain that is
 independent of $W(t)$
 and that has  a finite state space $\M=\{1,..., m_0\}$ and generator $Q/\eps=\big(q_{ij}/\eps\big)_{m_0\times m_0}$,
 $X^{\eps,\delta}$ is an $\R^d$-valued process, $f: \R^d\times\M\to\R^d, \sigma: \R^d\times\M\to\R^{d\times m},$ and
 $\eps>0$ and $\delta>0$ are two small parameters.
 Assume that $Q$ is irreducible. The irreducibility of $Q$ implies that the Markov chain  associated
 with $Q$ (denoted by $\tilde \alpha(t)$)
  is ergodic, with a unique stationary distribution $(\nu_1,...,\nu_{m_0})$.
Intuitively, when $\eps$ and $\delta$ are very small,
$\alpha^\eps(t)$ converges rapidly to its stationary distribution and the intensity of the diffusion is negligible. As a result,
on each finite interval of $t$, a solution of equation \eqref{eq2.1} can be approximated by
\begin{equation}\label{eq2.2}
dX(t)=\bar f(X(t))dt, \ X(0)=x,
\end{equation}
where $\bar f(x)=\sum_{i=1}^{m_0}f(x, i)\nu_i$.
However, if in lieu of a finite interval,  we consider the process in
the infinite interval $[0,+\infty)$.
The corresponding results
could be considerably different.
Suppose that equation \eqref{eq2.2} has a stable limit cycle, a natural question is whether invariant measures to
 the process represented by  solution of
 \eqref{eq2.1} converge weakly to the
  measure concentrated on the limit cycle. To address this question, we
substantially extend the results of \cite{CH} by considering the presence of both small diffusion and rapid switching.
Moreover,
because of the complexity due to the presence of the switching and coupling, new mathematical techniques need to be introduced to deal with the
current problem.
In addition, even we only consider the problem as that of \cite{CH},
it will be seen later that some of our assumptions are
weaker
than those used   in \cite{CH}.

The rest of the paper is organized as follows.
The main assumptions are given in Section \ref{sec:2}. Some auxiliary results are also presented. Section \ref{sec:3} takes
up the issue of estimation for the exit time of the solution from critical points that is needed to prove the main result. In Section \ref{sec:4}, we consider three cases. In each of the cases, we
address the aforementioned question under suitable conditions. Section
\ref{sec:5} deals with applications of our results to a general predator-prey model. It should be mentioned that the applications are not straightforward.
It is not easy to prove the existence and tightness of the family of invariant probability measures that are usually obtained using Lyapunov-type functions. However, the Lyapunov-type method does not work for our model. We  introduce certain tools to overcome the difficulties. Finally, we provide some numerical examples in Section \ref{sec:6} to illustrate our results.

\section{Assumptions and Preliminary Results}\label{sec:2}
 Throughout the paper, we denote by $A'$ the transpose of a matrix $A$,
   $|\cdot|$ the Euclidean norm of vectors in $\R^d$, and $\|A\|:=\sup\{|Ax|: x\in\R^d, |x|=1\}$ the operator norm of a matrix $A\in \R^{d\times d}$.
We use notations $a\wedge b=\min\{a, b\}$ and $a\vee b=\max\{a, b\}$.
Assuming in this paper that $\delta$ depends on $\eps$ ($\delta= \delta(\eps)$) and $\lim\limits_{\eps\to0}\delta (\eps)=0$,
we impose the following assumptions for equations \eqref{eq2.1} and \eqref{eq2.2}.
\begin{asp}\label{asp1} {\rm
\begin{enumerate}[(i)]
  \item
  For each $i\in \M$,
  $f(\cdot, i)$ and  $\sigma(\cdot, i)$ are locally Lipschitz continuous.
  \item There is an $a>0$ and a  twice continuously differentiable
   real-valued function $\Phi(x)\geq0$ such that $\lim\limits_{R\to\infty}\inf\{\Phi(x): |x|\geq R\}=\infty$ and that
  $\nabla \Phi'(x)f(x,i) \leq a(\Phi(x)+1)$,
   $\forall\, (x, i)\in\R^d\times\M$.
\item Equation \eqref{eq2.2} has a unique limit cycle denoted by $\Gamma.$
\item In any compact subset of $\R^d$, there is a finite number of critical
points of $\bar f(\cdot)$.
\item For each compact set $\K$ not containing any critical point of $\bar f$, and $\gamma>0$, there exists a $T=T_{\K,\gamma}>0$ such that $\inf\limits_{y\in\Gamma}|\bar X_x(t)- y|<\gamma$ for all $t\geq T$.	
\item There is an $\eps_0>0$ such that for all $0<\eps<\eps_0$, there exists a unique solution
to equation \eqref{eq2.1}.
The   process $(X(t), \alpha^{\eps,\delta}(t))$ has the strong Markov property and has an invariant measure $\mu^{\eps,\delta}$.
As a convention, given $A\subset\R^d$, we denote $\mu^{\eps,\delta}(A\times\M)$ by $\mu^{\eps,\delta}(A)$ for convenience.
The family $\{\mu^{\eps,\delta}: 0<\eps<\eps_0\}$ is tight in the sense that for any $\gamma>0$, there exists an $R>0$ such that $\mu^{\eps,\delta}\{B_R\}>1-\gamma$ for all $0<\eps<\eps_0$, where $B_R=\{x:|x|\leq R\}$.
\end{enumerate}
}\end{asp}

For simplicity, we
 have assumed that
 \eqref{eq2.1} has a unique solution; we have also assumed
  that the associated process is strong Markov.
  Sufficient conditions ensuring these can be provided (see \cite{MY, YZ}).
  Since these are already
known and are not our main concern here,
  we  use the conditions  as stated above.
Let $T_\Gamma$ be the period of the limit cycle $\Gamma$. For any $y\in\Gamma$, we can define a probability measure $\mu^0$ (not dependent on $y$) by
$$\mu^0(A)=\dfrac1{T_\Gamma}\int_0^{T_\Gamma} \1_{\{\bar X_y(s)\in A\}}ds,$$ where $\bar X_y(t)$ is the solution to equation \eqref{eq2.2} starting at $ \bar X(0)=y$ and $\1_{\{\cdot\}}$ is the indicator function.
Thus $\mu^0\cd$ is simply an averaged occupation measure.
Recall that $\delta = \delta_\eps$. We shall  address
asymptotic behavior
of $\mu^{\eps,\delta}$
for the following three cases:
\begin{equation}\label{eq:ep-dl}
\lim\limits_{\eps\to0}\dfrac{\delta}\eps=\left\{\begin{array}{ll}l\in(0,\infty), &\mbox{ case 1}\\0, &\mbox{ case 2}\\\infty, &\mbox{ case 3}.\end{array}\right.
\end{equation}
The multi-scale  modeling point was similar to \cite{HY14} in which large deviations were considered.
Here, we use such setting to study limit cycles of the associated dynamic systems.
We
impose  additional conditions corresponding to each of the cases in \eqref{eq:ep-dl}.
In case 2, since $\delta$ tends to $0$ much faster than $\eps$, if $\delta$ is sufficiently small, then intuitively,
the behavior of $X^{\eps,\delta}(t)$ should be similar to $\xi^\eps(t)$,
% that is
the solution to
\begin{equation}\label{eq2.3}
d\xi^\eps(t)=f\big(\xi^\eps(t),\alpha^\eps(t)\big)dt.
\end{equation}
Hence, if for each $i\in \M$, $f(x^*, i)=0$  at a critical point $x^*$ of $\bar f$, the Dirac distribution $\delta(x-x^*)$ is an invariant
measure for $\xi^\eps(t)$. Due to the above argument, the
 sequence of invariant probability measures $\mu^{\eps,\delta}$ (or one of its subsequences) may converge to $\delta(x-x^*)$. Consequently, we need to suppose that there is an $i^*\in\M$ such that $f(x^*, i^*)\ne0$ in order to obtain the convergence of $\mu^{\delta, \eps}$ to the measure $\mu^0$. Analogously, in case 3, we suppose that there exists an $i^*\in\M$ such that $\sigma(x^*, i^*)\ne0$.
 For case 1,  it is assumed that there is an $i^*\in\M$ satisfying either $\sigma(x^*, i^*)\ne0$ or $f(x^*, i^*)\ne0$.

The following formula is the well-known  exponential martingale inequality, which will be used several times in our proofs.
It asserts that
$$\PP\left\{\sup_{t\in[0, T]}\Big[\int_0^tg(s)dW(s)-\dfrac{a}{2}\int_0^tg^2(s)ds\Big]>b\right\}\leq e^{-ab},$$
if $g(t)$ is a real-valued $\F_t$-adapted process and $\int_0^Tg^2(t)dt<\infty$ almost surely (see \cite[Theorem 1.7.4]{XM}).

\begin{lm}\label{lm2.1}
For any $R, T, \gamma>0$, there is a $k_1=k_1(R, T, \gamma)>0$ such that for sufficiently small $\delta$,
$$\PP\{|X^{\eps,\delta}_{x, i}(t)-\xi^{\eps}_{x, i}(t)|\geq\gamma, \ \forall 0\leq t\leq T\}<\exp\Big(-\dfrac{k_1}{\delta}\Big), \ \forall |x|\leq R,$$
where $X^{\eps,\delta}_{x, i}(t)$ and $\xi^{\eps}_{x, i}(t)$ are solutions to
equations \eqref{eq2.1} and \eqref{eq2.3} with the same initial value $(x, i)$, respectively.
\end{lm}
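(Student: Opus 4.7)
The plan is a classical Gronwall plus exponential martingale argument in a localized region. Write $e(t):=X^{\eps,\delta}_{x,i}(t)-\xi^{\eps}_{x,i}(t)$; subtracting \eqref{eq2.1} and \eqref{eq2.3} gives
$$e(t)=\int_0^t\bigl[f(X^{\eps,\delta}(s),\alpha^\eps(s))-f(\xi^\eps(s),\alpha^\eps(s))\bigr]ds+M(t),$$
where $M(t):=\sqrt\delta\int_0^t\sigma(X^{\eps,\delta}(s),\alpha^\eps(s))dW(s)$. First I would \emph{localize}: applying the Lyapunov bound in Assumption~\ref{asp1}(ii) together with Gronwall to $\Phi(\xi^\eps(t))$ gives $\Phi(\xi^\eps(t))\leq(\Phi(x)+1)e^{aT}-1$ for all $t\in[0,T]$, and since $|x|\leq R$ and $\Phi\to\infty$ at infinity, this yields a deterministic radius $R'=R'(R,T)$ such that $|\xi^\eps(t)|\leq R'$ on $[0,T]$. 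Introduce $\tau:=\inf\{t\geq 0:|e(t)|\geq\gamma\}$. On $[0,\tau\wedge T]$ the process $X^{\eps,\delta}$ lies in the compact set $\{|x|\leq R'+\gamma\}$, on which $f(\cdot,i)$ is Lipschitz with a common constant $L$ and $\|\sigma(\cdot,i)\|$ is bounded by some $K_\sigma$, uniformly in $i\in\M$ (by Assumption~\ref{asp1}(i) together with finiteness of $\M$).

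On $[0,\tau\wedge T]$ the drift contribution to $|e|$ is bounded by $L\int_0^t|e(s)|ds$, so Gronwall's inequality applied to $t\mapsto|e(t\wedge\tau)|$ yields
$$\sup_{0\leq t\leq T}|e(t\wedge\tau)|\leq e^{LT}\sup_{0\leq s\leq T}|M(s\wedge\tau)|.$$
Since $|e(\tau)|=\gamma$ on $\{\tau\leq T\}$, this gives the inclusion
$$\{\tau\leq T\}\subseteq\Bigl\{\sup_{0\leq s\leq T}|M(s\wedge\tau)|\geq\gamma e^{-LT}\Bigr\}.$$
Each scalar entry of $M(\cdot\wedge\tau)$ is a stochastic integral whose integrand is bounded in modulus by $\sqrt\delta K_\sigma$, so I would apply the exponential martingale inequality cited just above the lemma, entry by entry, with $g(s)=\sqrt\delta\,\sigma_{jk}(X^{\eps,\delta}(s),\alpha^\eps(s))\1_{\{s\leq\tau\}}$, $a=c/\delta$, and $b$ a suitable fraction of $\gamma e^{-LT}$. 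Optimising the constant $c>0$ renders the correction term $\tfrac{a}{2}\int g^2 ds$ bounded by a fixed constant (independent of $\delta$), while the exponent becomes $-k_1/\delta$. A union bound over the $d\cdot m$ components of $M$ and over the sign yields the desired estimate, with $k_1=k_1(R,T,\gamma)>0$, for all sufficiently small $\delta$.

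The main obstacle is essentially bookkeeping rather than any deep difficulty: one must arrange the localization so that single constants $L$ and $K_\sigma$ control everything up to $\tau$, and then tune the parameters of the exponential martingale inequality so that the quadratic-variation correction is absorbed without degrading the $1/\delta$ rate in the final exponent. Beyond the exponential martingale inequality, the Lipschitz and growth hypotheses in Assumption~\ref{asp1}(i)--(ii), and standard Gronwall, no further tools are required.
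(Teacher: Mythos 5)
Your argument is correct and follows essentially the same strategy as the paper: localize so that the coefficients are Lipschitz and bounded, apply Gronwall to the error, and invoke the exponential martingale inequality with the parameter $a$ of order $1/\delta$ to produce the $e^{-k_1/\delta}$ bound. The differences are only in implementation: the paper localizes by truncating the coefficients with a smooth cutoff $h$ and runs It\^o/Gronwall on the squared error $|Y^{\eps,\delta}-\xi^{\eps}|^2$, absorbing the quadratic-variation correction into the Gronwall kernel, whereas you localize with the stopping time $\tau$ and bound $\sup_{t\le T}|M(t\wedge\tau)|$ componentwise with a bounded integrand; both routes give the same (in fact the stronger, uniform-in-$t$) conclusion with $k_1=k_1(R,T,\gamma)$.
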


\begin{proof}
By (i) and (ii) of Assumption \ref{asp1}, we can deduce the existence and boundedness of a unique solution to equation \eqref{eq2.3}
using
the
Lyapunov function method. Moreover, we can find an $R_T>0$ such that $|\xi^{\eps}_{x, i}(t)|<R_T-\gamma, \ \forall\, t\in[0, T]$ a.s. $\,\forall\,|x|\leq R$.
Let $h(x)$ be a twice differentiable function with compact support and $h(x)=1$ if $|x|\leq R_T$ and $h(x)=0$
if $|x|\geq R_T+1$.
Put $f_h(x, i)=h(x)f(x, i)$, $\sigma_h(x, i)=h(x)\sigma(x, i)$.
Let $Y^{\eps,\delta}_{x, i}(t)$ be the solution starting at $(x, i)$ to
\begin{equation}
d Y(t)= f_h(Y(t),\alpha^\eps(t))dt+\sqrt{\delta}\sigma_h(Y(t), \alpha^\eps(t)dW(t)
\end{equation}
Note that $Y^{\eps,\delta}_{x, i}(t)=X^{\eps,\delta}_{x, i}(t)$ up to the time $\zeta=\inf\{t>0: |X^{\eps,\delta}_{x, i}(t)|>R_T\}$.
Meanwhile $\xi^{\eps}_{x, i}(t)$,
the solution to \eqref{eq2.3},  is also the solution to
$$ d Z(t)= f_h(Z(t),\alpha^\eps(t))dt$$
for $|x|\leq R$ and $0\leq t\leq T$.
We have from the generalized It\^o's formula that
\begin{equation}
\begin{aligned}
|Y&^{\eps,\delta}_{x, i}(t)-\xi^{\eps}_{x, i}(t)|^2\\
\leq&2\int_0^{t}|Y^{\eps,\delta}_{x, i}(s)-\xi^{\eps}_{x, i}(s)|'|f_h(Y^{\eps,\delta}_{x, i}(s),\alpha^\eps(s))-f_h(\xi^{\eps}_{x, i}(s),\alpha^\eps(s))|ds\\
&+\int_0^{t}\delta\trace\big((\sigma_h\sigma_h')(Y^{\eps,\delta}_{x, i}(s),\alpha^\eps(s))\big)ds\\
&+2\sqrt{\delta}\left|\int_0^{t}\big(Y^{\eps,\delta}_{x, i}(s)-\xi^{\eps}_{x, i}(s)\big)'\sigma_h(Y^{\eps,\delta}_{x, i}(s),\alpha^\eps(s)\big)dW(s)\right|.
\end{aligned}
\end{equation}
By the exponential martingale inequality, for any $\delta<k_1$,
\begin{align*}
\PP(A)\geq 1-2\exp\Big(-\dfrac{2k_1}\delta\Big)\geq1-\exp\Big(-\frac{k_1}\delta\Big),
\end{align*}
where
\begin{align*}
A=\bigg\{\omega: &\Big|\int_0^{t}\sqrt{\delta}\big(Y^{\eps,\delta}_{x, i}(s)-\xi^{\eps}_{x, i}(s)\big)'\sigma_h(Y^{\eps,\delta}_{x, i}(s),\alpha^\eps(s)\big)dW(s)\Big|\\
&\qquad-\dfrac1\delta\int_0^{t}\delta\big|Y^{\eps,\delta}_{x, i}(s)-\xi^{\eps}_{x, i}(s)\big|^2\|\sigma_h\sigma_h'(Y^{\eps,\delta}_{x, i}(s),\alpha^\eps(s)\big)\|ds\leq k_1\, \text{ for all }\, t\in[0,T]\bigg\}.
\end{align*}
Since $f_h$ is Lipschitz and $\sigma_h$ is bounded, we have for $\omega\in A$ and some sufficiently large $M_1$,
\begin{equation}
\begin{aligned}
|Y^{\eps,\delta}_{x, i}&(t)-\xi^{\eps}_{x, i}(t)|^2\\
\leq & 2 \int_0^{t}|Y^{\eps,\delta}_{x, i}(s)-\xi^{\eps}_{x, i}(s)|'|f_h(Y^{\eps,\delta}_{x, i}(s),\alpha^\eps(s))-f_h(\xi^{\eps}_{x, i}(s),\alpha^\eps(s))|ds\\
&\ +2\int_0^{t}\big|Y^{\eps,\delta}_{x, i}(s)-\xi^{\eps}_{x, i}(s)\big|^2\|\sigma_h\sigma_h'(Y^{\eps,\delta}_{x, i}(s),\alpha^\eps(s)\big)\|ds\\
&\ +\int_0^{t}\delta\trace\big((\sigma_h\sigma_h')(Y^{\eps,\delta}_{x, i}(s),\alpha^\eps(s))\big)ds+2\int_0^tk_1ds\\
\leq & M_1\int_0^t|Y^{\eps,\delta}_{x, i}(t)-\xi^{\eps}_{x, i}(t)|^2ds+(2k_1+M_1\delta) t
.\end{aligned}
\end{equation}
 For each $t\in[0, T]$,
  Gronwall's inequality leads to
$$|Y^{\eps,\delta}_{x, i}(t)-\xi^{\eps}_{x, i}(t)|^2\leq (2k_1+M_1\delta)T\exp(M_1T)<\gamma^2$$ for $0<\delta<k_1$ sufficiently small.
It also follows from this inequality that for $\omega\in A$ and $0<\delta<k_1$ sufficient small, we have $\zeta>T$ which implies $$|X^{\eps,\delta}_{x, i}(t)-\xi^{\eps}_{x, i}(t)|^2=|Y^{\eps,\delta}_{x, i}(t)-\xi^{\eps}_{x, i}(t)|^2<\gamma^2,$$ $\forall 0\leq t\leq T.$
\end{proof}

\begin{lm}\label{lm2.1b}
For each $x$ and $\gamma$, we can find $k_{\gamma,x}=k_{\gamma,x}(T)>0$ such that
$$\PP\left\{|\xi^{\eps}_{x, i}(t)-\bar X_x(t)|\geq\gamma , \ \forall 0\leq t\leq T\right\}\leq\exp\Big(-\frac{k_{\gamma,x}}\eps\Big),$$
where $\bar X_x(t)$ is the solution to equation \eqref{eq2.2} with the initial value $x$.
\end{lm}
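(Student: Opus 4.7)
The plan is to mimic the structure of Lemma~\ref{lm2.1}, replacing the diffusive exponential martingale by a pure-jump martingale generated by the fast chain $\alpha^\eps$. First, since $\bar X_x(\cdot)$ is continuous on $[0,T]$ and $\xi^\eps_{x,i}(\cdot)$ is uniformly bounded on $[0,T]$ by Assumption~\ref{asp1}(i)--(ii) (via the Lyapunov function $\Phi$), I localize exactly as in Lemma~\ref{lm2.1}: cut off $f$ outside a sufficiently large ball $B_{R_T}$ containing both trajectories, so that $f(\cdot,i)$ becomes globally Lipschitz with a common constant $L$ and the original trajectories are unaffected on $[0,T]$. Writing $\hat f(x,i):=f(x,i)-\bar f(x)$, the identity
$$
\xi^\eps_{x,i}(t)-\bar X_x(t)=\int_0^t \hat f(\xi^\eps_{x,i}(s),\alpha^\eps(s))\,ds + \int_0^t\bigl[\bar f(\xi^\eps_{x,i}(s))-\bar f(\bar X_x(s))\bigr]\,ds,
$$
the Lipschitz property of $\bar f$, and Gronwall's inequality reduce the lemma to an exponential tail estimate on $\sup_{t\in[0,T]}\bigl|\int_0^t \hat f(\xi^\eps_{x,i}(s),\alpha^\eps(s))\,ds\bigr|$.

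To bound the fluctuation integral I invoke the Poisson equation attached to $Q$. Irreducibility of $Q$ together with $\sum_i\hat f(x,i)\nu_i=0$ for every $x$ yields, via the Fredholm alternative, a function $g(x,\cdot):\M\to\R^d$ (as regular in $x$ as $f$ is) satisfying $\sum_j q_{ij}g(x,j)=-\hat f(x,i)$ for each $i$. Applying the generalized Dynkin formula for the switching process to $\eps g(\xi^\eps_{x,i}(t),\alpha^\eps(t))$ then produces
$$
\int_0^t \hat f(\xi^\eps_{x,i}(s),\alpha^\eps(s))\,ds = \eps\bigl[g(x,i)-g(\xi^\eps_{x,i}(t),\alpha^\eps(t))\bigr] + \eps\int_0^t \partial_x g(\xi^\eps_{x,i}(s),\alpha^\eps(s))\cdot f(\xi^\eps_{x,i}(s),\alpha^\eps(s))\,ds + \eps M_\eps(t),
$$
where $M_\eps$ is the compensated pure-jump martingale generated by $\alpha^\eps$. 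The first two right-hand terms are uniformly bounded by $C_0\eps$ on $[0,T]$ since $g$ and $\partial_x g$ are bounded on $B_{R_T}\times\M$. The jumps of $M_\eps$ are bounded by $2\|g\|_\infty$ and its predictable quadratic variation is of order $T/\eps$ (because the jump intensities are $O(1/\eps)$), so $\eps M_\eps$ has jumps of order $\eps$ and quadratic variation of order $\eps T$.

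The final step is a Bernstein-type exponential inequality for the jump martingale $\eps M_\eps$. Forming the exponential supermartingale $\exp(\theta\eps M_\eps(t)-\Lambda_\eps^\theta(t))$, using the elementary bound $e^x-1-x\leq x^2 e^{|x|}$ to control the compensator $\Lambda_\eps^\theta(t)\leq C_1\theta^2\eps T$ for $\theta\eps$ bounded, applying Doob's maximal inequality, and optimizing over $\theta$ give
$$
\PP\Bigl\{\sup_{t\in[0,T]}|\eps M_\eps(t)|\geq \gamma''\Bigr\}\leq 2\exp\!\left(-\frac{c(\gamma'')^2}{\eps}\right)
$$
for some $c=c(T,R_T)>0$. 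Choosing $\gamma''$ small enough that the Gronwall factor $e^{LT}$ together with the deterministic $O(\eps)$ terms produces a cumulative error below $\gamma$ completes the proof, with $k_{\gamma,x}(T):=c(\gamma'')^2$. The main obstacle is precisely this exponential bound on $M_\eps$: unlike Lemma~\ref{lm2.1}, no Brownian exponential martingale identity applies, so the exponential supermartingale for the pure-jump process must be constructed by hand, and it is the solvability and Lipschitz regularity of the Poisson equation that render the estimate quantitative.
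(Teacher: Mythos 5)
Your argument is correct in substance, but it takes a genuinely different route from the paper: the paper disposes of this lemma in one line by citing the large deviation principle for two-time-scale Markovian switching systems established in \cite{HYZ} (plus the remark that Assumption \ref{asp1}(i)--(ii) gives existence and boundedness of $\xi^\eps$ on $[0,T]$), whereas you give a self-contained proof by the perturbed-test-function method: localize, write $\xi^\eps-\bar X$ as the sum of an averaged-out fluctuation $\int_0^t\hat f(\xi^\eps,\alpha^\eps)\,ds$ and a Lipschitz term handled by Gronwall, solve the Poisson equation $Qg(x,\cdot)=-\hat f(x,\cdot)$ (solvable since $\hat f(x,\cdot)\perp\nu$ by irreducibility), and reduce the fluctuation to an $O(\eps)$ boundary term plus $\eps$ times a bounded-jump martingale whose tail you control with a Bernstein/exponential-supermartingale bound, yielding $\exp(-c\gamma''^2/\eps)$. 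What your approach buys is an elementary, quantitative estimate with explicit constants (and, if you track them, uniformity in $x$ over compact sets, which is exactly what Lemma \ref{lm2.2} later extracts from compactness plus continuous dependence); what the paper's citation buys is brevity, at the cost of invoking the full LDP machinery. Two small points to tidy: since $f(\cdot,i)$ is only locally Lipschitz, after truncation $g(\cdot,i)$ is Lipschitz but need not be differentiable, so either mollify $f$ or note that $t\mapsto g(\xi^\eps(t),i)$ is absolutely continuous and apply the chain rule a.e.\ (the bound $|\tfrac{d}{dt}g|\le \mathrm{Lip}(g)\,\|f\|_\infty$ is all you use); and the prefactor $2$ and the restriction to small $\eps$ should be absorbed into $k_{\gamma,x}$, which is harmless since the lemma is only used in that regime. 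Finally, the event in the statement is ``$\ge\gamma$ for all $t$,'' and you bound the larger supremum event, which of course suffices.
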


\begin{proof}
It follows immediately from the large deviation principle shown in \cite{HYZ} with a note that the existence and boundedness of a unique solution to equation \eqref{eq2.3} follows from (i) and (ii) of Assumption \ref{asp1}.
\end{proof}

Combining Lemmas \ref{lm2.1} and \ref{lm2.1b}, we obtain the following lemma.

\begin{lm}\label{lm2.2}
For any $R$, $T$, $\gamma>0$, there is a $k=k(R,\gamma, T)>0$ such that
$$\PP\left\{|X^{\eps,\delta}_{x,i}(t)-\bar X_x(t)|\geq\gamma, \ \forall  0\leq t\leq T\right\}<\exp\Big(-\frac{k}{\eps+\delta}\Big), \ \forall  |x|\leq R.$$
\end{lm}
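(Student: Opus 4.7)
The plan is to combine Lemmas~\ref{lm2.1} and~\ref{lm2.1b} by inserting $\xi^{\eps}_{x,i}(t)$ as an intermediate term and invoking the triangle inequality. Writing
\[
|X^{\eps,\delta}_{x,i}(t)-\bar X_x(t)|
\leq |X^{\eps,\delta}_{x,i}(t)-\xi^{\eps}_{x,i}(t)|+|\xi^{\eps}_{x,i}(t)-\bar X_x(t)|,
\]
one sees that on the event where the left-hand side exceeds $\gamma$ at some $t\in[0,T]$, at least one of the two terms on the right must exceed $\gamma/2$. A union bound then reduces the problem to controlling the diffusion-to-ODE (small-$\delta$) error and the switching-to-averaged (small-$\eps$) error separately.

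\textbf{Step 1: Diffusion error.} Apply Lemma~\ref{lm2.1} with tolerance $\gamma/2$. This already gives a constant $k_1=k_1(R,T,\gamma/2)$ such that, uniformly in $|x|\leq R$ and $i\in\M$,
\[
\PP\bigl\{\sup_{0\leq t\leq T}|X^{\eps,\delta}_{x,i}(t)-\xi^{\eps}_{x,i}(t)|\geq\gamma/2\bigr\}<\exp(-k_1/\delta).
\]

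\textbf{Step 2: Switching error, uniform in $x$.} Lemma~\ref{lm2.1b} is stated for each fixed $x$, so the constant $k_{\gamma,x}$ depends on $x$. To upgrade this to a bound uniform on $B_R$, I would use a compactness argument. By Assumption~\ref{asp1}(i), $\bar f$ is locally Lipschitz, so $\bar X_x(t)$ depends continuously on $x$ uniformly for $t\in[0,T]$. Hence I can pick finitely many points $x_1,\dots,x_N\in B_R$ whose $\eta$-balls cover $B_R$, with $\eta$ chosen small enough that $\sup_{0\leq t\leq T}|\bar X_x(t)-\bar X_{x_j}(t)|<\gamma/8$ and $\sup_{0\leq t\leq T}|\xi^{\eps}_{x,i}(t)-\xi^{\eps}_{x_j,i}(t)|<\gamma/8$ whenever $|x-x_j|<\eta$ (the latter is a deterministic Gronwall estimate on the ODE driven by the fixed realization of $\alpha^\eps$, using the local Lipschitz bound on $f(\cdot,i)$ together with the a priori bound from Assumption~\ref{asp1}(ii)). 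Apply Lemma~\ref{lm2.1b} at each $x_j$ with tolerance $\gamma/4$ and take $k_2:=\min_{1\leq j\leq N,\,i\in\M}k_{\gamma/4,x_j}>0$. Combining the two $\gamma/8$ deterministic bounds with the $\gamma/4$ probabilistic bound yields
\[
\PP\bigl\{\sup_{0\leq t\leq T}|\xi^{\eps}_{x,i}(t)-\bar X_x(t)|\geq\gamma/2\bigr\}\leq\exp(-k_2/\eps),
\qquad\forall\,|x|\leq R,\ i\in\M.
\]

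\textbf{Step 3: Combine.} Since $\delta\leq\eps+\delta$ and $\eps\leq\eps+\delta$,
\[
\exp(-k_1/\delta)+\exp(-k_2/\eps)
\leq 2\exp\!\bigl(-\min(k_1,k_2)/(\eps+\delta)\bigr).
\]
Choosing any $0<k<\min(k_1,k_2)$ and shrinking $\eps_0$ so that $2\leq\exp((\min(k_1,k_2)-k)/(\eps+\delta))$ for all $0<\eps<\eps_0$, we obtain the stated bound $\exp(-k/(\eps+\delta))$ with $k=k(R,\gamma,T)$.

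The only nonroutine step is Step~2: promoting the pointwise large-deviation estimate in Lemma~\ref{lm2.1b} to one uniform in $|x|\leq R$. Once that compactness/continuous-dependence argument is in place, Step~3 is purely an inequality manipulation, and the rest is bookkeeping.
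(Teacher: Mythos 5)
Your proposal is correct and follows essentially the same route as the paper: a triangle-inequality decomposition through $\xi^{\eps}_{x,i}$, Lemma~\ref{lm2.1} for the small-$\delta$ error, and a compactness/continuous-dependence (Gronwall) argument to upgrade the pointwise large-deviation bound of Lemma~\ref{lm2.1b} to one uniform over $B_R$, before summing the two exponentials and absorbing the constant into a slightly smaller $k$. The only differences are cosmetic (your $\gamma/8+\gamma/8+\gamma/4$ split versus the paper's $\gamma/6$ with the $\exp(M_2T)$ factor, and your explicit handling of the factor $2$).
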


\begin{proof}
By virtue of Lemma \ref{lm2.1b}, for each $x$ and $\gamma$, we have
$$\PP\left\{|\xi^{\eps}_{x, i}(t)-\bar X_x(t)|\geq\dfrac\gamma6, \ \forall  0\leq t\leq T\right\}\leq\exp\Big(-\frac{k_{\gamma/6,x}}\eps\Big).$$
Using the Lyapunov method to \eqref{eq2.3} in view of $(ii)$ of Assumption \ref{asp1},  there is an $H_{R, T}>0$ such that $|\xi^{\eps}_{x, i}(t)|\leq H_{R, T}$ and $|\bar X_x(t)|\leq H_{R, T}$ for all $|x|\leq R$ and $0\leq t\leq T$. Since $f(\cdot, i)$ is locally Lipschitz for all $i\in\M$, there is a constant $M_2>0$ such that $|f(u, i)-f(v, i)|\leq M_2|u-v|$ for all $|u|\vee|v|\leq H_{R, T}$ and $i\in\M$. Using the Gronwall
inequality, we have for $|x|\vee|y|\leq R$ and $i\in\M$ that
\bea \ad |\xi^{\eps}_{x, i}(t)-\xi^{\eps}_{y,i}(t)|\leq|x-y|\exp(M_2T)\mbox{ and}\\
\ad
|\bar X_x(t)-\bar X_y(t)|\leq|x-y|\exp(M_2T), \ \forall  t\in[0, T].\eea
Let $\lambda=\dfrac{\gamma}6\exp(-M_2T)$. It is easy to see that for $|x-y|<\lambda$,
$$\PP\left\{|\xi^{\eps}_{y,i}(t)-\bar X_y(t)|\geq\dfrac\gamma2, \ \forall 0\leq t\leq T\right\}\leq\exp\Big(-\frac{k_{\gamma/6,x}}\eps\Big).$$
By the compactness of $\{x: |x|\leq R\}$, for $\gamma>0$, there is a $k_2=k_2(R, T,\gamma)>0$ such that for $|x|\leq R$,
$$\PP\left\{|\xi^{\eps}_{x, i}(t)-\bar X_x(t)|\geq\dfrac\gamma2, \ \forall  0\leq t\leq T\right\}\leq \exp\Big(-\frac{k_2}\eps\Big).$$
Combining with Lemma \ref{lm2.1}, we have
\bea \disp \PP\left\{|X^{\eps,\delta}_{x,i}(t)-\bar X_x(t)|\geq\gamma, \ \forall  0\leq t\leq T\right\}\ad <\exp\Big(-\frac{k_1(R, T,\gamma/2)}\delta\Big)+\exp\Big(-\frac{k_2}\eps\Big)\\
\ad <\exp\Big(-\frac{k}{\eps+\delta}\Big)\eea
for a suitable $k$ and sufficiently small $\eps$ and $\delta$.
\end{proof}

\section{Probability Estimate for the First Exit Time}\label{sec:3}
Let $x^*$ be a critical point of $\bar f(\cdot)$. In this section, we  consider a sufficiently small neighborhood $N$ of $x^*$ and estimate the first time $X_{x,i}^{\eps, \delta}(\cdot)$ exits from $N$, say $\tau_{x,i}^{\eps, \delta}$.
Since we mainly consider the time that the process exits a small neighborhood, by arguments similar to those in the proof of Lemma \ref{lm2.1}, we can suppose that
$f$ and $\sigma$ are bounded.
Let $x^*=0$ for convenience.
Since we will assume later that there are $i^*\in \M$ and $\beta\in\R^d, |\beta|=1$ such that $\beta'f(0, i^*)>0$ or $\beta'\sigma(0, i^*)>0$, we choose $c_1>0$ sufficient small such that $S=\{y: |y|< c_1\}$ contains only one critical point 0 and that for all $x\in \bar S=\{y: |y|\leq c_1\}$ we have
$\beta'f(x, i^*)> 0$ or $\beta'\sigma(x, i^*)>0$ depending on which function we impose the condition on.
In view of (v) of Assumption \ref{asp1}, there is a $T_{c_1}>0$ such that $\bar X_x(t)\notin S$ for all $t>T_{c_1}$ and $x\in\partial S:=\{y: |y|=c_1\}$. It follows from the continuous dependence of solutions on initial
%values
data that there is $0<c_2<c_1$ such that
$\inf\{|\bar X_x(t)|: x\in\partial S, t\in[0, T_{c_1}]\}>c_2.$
Consequently, $\inf\{\bar X_x(t): x\notin S, t\geq0\}> c_2$.
Similarly, we can find $0<c_3<\dfrac{c_2}2$ such that
$\inf\{\bar X_x(t): |x|\geq c_2, t\geq0\}\geq 2c_3$.
Put $N=\{y: |y|<c_3\}$, $G =\{y: |y|<c_2\}$, and let $\gamma\in\big(0,\min\{c_1-c_2, c_2-2c_3, c_3\}\big)$.
Then, the following assertions are true:
\begin{itemize}
  \item $N\subset G\subset S.$
  \item For any $x\notin S$, $\bar X_x(t)\notin G $ for all $t\geq0$.
  \item For any $x\notin G $, $y\in N$, we have $|\bar X_x(t)-y|>\gamma, \ \forall \,t\geq0$.
\end{itemize}

\begin{lm}\label{lm3.1}
Suppose that there is an $\ell>0$ such that $\PP\{\tau_{x, i}^{\eps, \delta}<\ell\}\geq a^{\eps,\delta}>0, \ \forall  (x, i)\in N\times\M$, where $\lim\limits_{\eps\to0}a^{\eps,\delta}=0$.
Then $\PP\left\{\tau_{x, i}^{\eps, \delta}<\dfrac{\ell}{a^{\eps,\delta}}\right\}>1/2$ for sufficiently small $\eps$.
\end{lm}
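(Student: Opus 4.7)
The plan is a standard geometric-trials argument driven by the strong Markov property of $(X^{\eps,\delta}(\cdot),\alpha^\eps(\cdot))$. The hypothesis supplies a uniform lower bound $a^{\eps,\delta}$ on the one-window escape probability from any starting state in $N\times\M$; iterating this bound over successive windows of length $\ell$ should yield a geometric upper bound on the survival probability, and a final choice of roughly $1/a^{\eps,\delta}$ windows combined with $(1-a)^{1/a}\to e^{-1}$ will push that probability below $1/2$.

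The first main step is to establish, by induction on $n\geq 1$, that
$$\PP\{\tau_{x,i}^{\eps,\delta}\geq n\ell\}\leq (1-a^{\eps,\delta})^n \qquad \text{for every } (x,i)\in N\times\M.$$
The base case is simply the hypothesis. For the inductive step, write
$$\{\tau_{x,i}^{\eps,\delta}\geq n\ell\}=\{\tau_{x,i}^{\eps,\delta}\geq (n-1)\ell\}\cap\{X^{\eps,\delta}(t)\in N,\ t\in[(n-1)\ell,n\ell]\},$$
condition on $\F_{(n-1)\ell}$, and apply the strong Markov property at the deterministic time $(n-1)\ell$. On the event $\{\tau_{x,i}^{\eps,\delta}\geq (n-1)\ell\}$ the trajectory still lies in $\bar N$ at time $(n-1)\ell$, so the conditional probability of remaining in $N$ throughout the subsequent window of length $\ell$ is bounded by $\sup_{(y,j)\in N\times\M}\PP\{\tau_{y,j}^{\eps,\delta}\geq \ell\}\leq 1-a^{\eps,\delta}$. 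Multiplying this against the inductive bound closes the induction.

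The second step is just bookkeeping. Set $n_\eps:=\lfloor 1/a^{\eps,\delta}\rfloor$, so that $n_\eps\ell\leq \ell/a^{\eps,\delta}$. Then
$$\PP\{\tau_{x,i}^{\eps,\delta}\geq \ell/a^{\eps,\delta}\}\leq \PP\{\tau_{x,i}^{\eps,\delta}\geq n_\eps \ell\}\leq (1-a^{\eps,\delta})^{n_\eps},$$
and since $a^{\eps,\delta}\to 0$, the elementary limit $(1-a)^{\lfloor 1/a\rfloor}\to e^{-1}<1/2$ as $a\to 0^+$ gives $(1-a^{\eps,\delta})^{n_\eps}<1/2$ for all sufficiently small $\eps$. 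Taking complements yields the claim.

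I do not expect any genuine obstacle. The only subtlety is making sure the Markov restart state at time $(n-1)\ell$ lies in the region where the hypothesis is available; this is handled automatically by conditioning on the survival event, modulo the standard minor issue of passing between $N$ and $\bar N$ which is absorbed into choosing $N$ slightly smaller at the outset if needed.
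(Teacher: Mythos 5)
Your proposal is correct and follows essentially the same route as the paper: iterate the Markov property over successive windows of length $\ell$ to get the geometric bound $(1-a^{\eps,\delta})^n$ on the survival probability, then take roughly $\lfloor 1/a^{\eps,\delta}\rfloor$ windows and use $(1-a)^{\lfloor 1/a\rfloor}\to e^{-1}<1/2$. Your handling of the floor versus ceiling of $1/a^{\eps,\delta}$ and of the restart state lying in $N$ on the survival event is, if anything, slightly more careful than the paper's write-up.
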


\begin{proof}
Let $n^{\eps,\delta}\in\N$ such that $n^{\eps,\delta}-1<\dfrac{1}{a^{\eps,\delta}}\leq n^{\eps,\delta}.$
We consider events $A_k=\{X_x^{\eps, \delta}(t)\in N, \ \forall (k-1)\ell <t\leq k\ell \}$.
We have $\PP(A_1)\leq1-a^{\eps,\delta}.$
By the Markov property,
\bea \disp\PP(A_k|A_1,...,A_{k-1})\ad =\int_{N}\PP\left\{\tau_y^{\eps, \delta}\leq \ell \right\}\PP\Big\{X_x^{\eps, \delta}((k-1)\ell )\in dy\Big|A_1,...,A_{k-1}\Big\}\\
\ad\leq1-a^{\eps,\delta}.\eea
As a result,
$$\PP(A_1A_2\cdots A_n)\leq (1-a^{\eps,\delta})^{n^{\eps,\delta}}$$
Since $\lim\limits_{\eps\to0}a^{\eps,\delta}=0$, we deduce that $\lim\limits_{\eps\to0}(1-a^{\eps,\delta})^{n^{\eps,\delta}}=e^{-1}$, which means that $(1-a^{\eps,\delta})^{n^{\eps,\delta}}<1/2$ for sufficiently small $\eps$.
\end{proof}

\begin{lm}\label{lm3.2}
If  there is an $i^*$ such that $f(0, i^*)\ne 0$, then for any $\Delta>0$, we can find $H^\Delta_1>0$,and $ \eps_{1}(\Delta)$ such that for $\eps<\eps_{1}(\Delta)$,
$$\PP\left\{\tau^{\eps,\delta}_{x, i}\leq H^\Delta_1\exp\Big(\dfrac{\Delta}{\eps}\Big)\right\}>1/2\,\forall\,(x,i)\in N\times\M.$$
\end{lm}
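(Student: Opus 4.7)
The plan is to construct a bounded-time event $A$ of (small) known probability $a^{\eps,\delta}$ on which $X^{\eps,\delta}_{x,i}$ exits $N$, and then apply Lemma \ref{lm3.1} to upgrade this to the desired probability-$>1/2$ bound. The event $A$ forces the chain $\alpha^\eps$ to spend a fixed-length interval $t_0$ in state $i^*$, during which the non-vanishing drift $f(0,i^*)$ carries the process across the diameter of $N$. Concretely, set $v=f(0,i^*)\neq 0$, $\beta=v/|v|$, and $t_0=8c_3/|v|$; by the choice of $c_1$ in the preamble to Section \ref{sec:3}, $\beta'f(x,i^*)\geq|v|/2$ for every $x\in\bar S$. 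Choose a fixed constant $C_0$ (depending only on $Q$) so that $\PP(\alpha^\eps(\eps C_0)=i^*\mid\alpha^\eps(0)=i)\geq\nu_{i^*}/2$ uniformly over initial states $i\in\M$, which is possible by ergodicity of $\alpha^\eps$ on the time scale $\eps$. Define
\[A_1=\{\alpha^\eps(s)=i^*\text{ for all }s\in[\eps C_0,\eps C_0+t_0]\},\]
\[A_2=\bigg\{\sup_{t\in[\eps C_0,\eps C_0+t_0]}\Big|\sqrt{\delta}\int_{\eps C_0}^{t}\beta'\sigma(X^{\eps,\delta}(u),\alpha^\eps(u))dW(u)\Big|\leq c_3\bigg\}.\]
By the Markov property and the exponential holding time of state $i^*$ (rate $|q_{i^*i^*}|/\eps$), $\PP(A_1)\geq(\nu_{i^*}/2)\exp(-|q_{i^*i^*}|t_0/\eps)$. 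By the exponential martingale inequality (with $\sigma$ bounded via the reduction at the top of Section \ref{sec:3}), $\PP(A_2)\geq 1/2$ for all small $\delta$. Hence $\PP(A_1\cap A_2)\geq a^{\eps,\delta}:=(\nu_{i^*}/4)\exp(-|q_{i^*i^*}|t_0/\eps)$ uniformly in $(x,i)\in N\times\M$.

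On $A_1\cap A_2$ the exit time $\tau^{\eps,\delta}_{x,i}$ is at most $\eps C_0+t_0$: arguing by contradiction, if $\tau>\eps C_0+t_0$ then $X^{\eps,\delta}\in N\subset\bar S$ throughout $[\eps C_0,\eps C_0+t_0]$, so $\beta'f(X^{\eps,\delta},i^*)\geq|v|/2$ pointwise, and integrating the SDE for $\beta'X^{\eps,\delta}$ on this interval yields
\[\beta'X^{\eps,\delta}(\eps C_0+t_0)-\beta'X^{\eps,\delta}(\eps C_0)\geq\tfrac{|v|}{2}t_0-c_3=3c_3,\]
whence $\beta'X^{\eps,\delta}(\eps C_0+t_0)\geq-c_3+3c_3=2c_3>c_3$, contradicting $|X^{\eps,\delta}(\eps C_0+t_0)|<c_3$. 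Set $\ell=\eps C_0+t_0\leq 2t_0$ for small $\eps$; then $\PP(\tau^{\eps,\delta}_{x,i}\leq\ell)\geq a^{\eps,\delta}$ uniformly in $(x,i)$, and Lemma \ref{lm3.1} yields $\PP(\tau^{\eps,\delta}_{x,i}\leq\ell/a^{\eps,\delta})>1/2$, i.e.\ $\tau^{\eps,\delta}_{x,i}\leq(8t_0/\nu_{i^*})\exp(|q_{i^*i^*}|t_0/\eps)$ with probability exceeding $1/2$.

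The main obstacle is matching this with the lemma's conclusion for \emph{arbitrary} $\Delta>0$: the exponential rate just produced is the fixed constant $C_\star=|q_{i^*i^*}|t_0=8|q_{i^*i^*}|c_3/|v|$, determined by the radius of $N$, and one needs it bounded by $\Delta$. I would handle small $\Delta$ by shrinking the effective neighborhood: apply the above argument to a sub-ball $B_r\subset N$ with $r$ small enough that $8|q_{i^*i^*}|r/|v|\leq\Delta/2$, obtaining exit from $B_r$ in time $\leq H\exp((\Delta/2)/\eps)$ with probability $>1/2$; then, by the strong Markov property applied at the exit time from $B_r$, the process lies on the compact shell $\{r\leq|x|\leq c_3\}$, which contains no critical point of $\bar f$, so (v) of Assumption \ref{asp1} combined with Lemma \ref{lm2.2} propagates $X^{\eps,\delta}$ out of $N$ in bounded additional time $T(r)$ with probability approaching $1$. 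The total $H\exp((\Delta/2)/\eps)+T(r)\leq H_1^\Delta\exp(\Delta/\eps)$ for an appropriate $H_1^\Delta$ and all $\eps<\eps_1(\Delta)$, which absorbs both the ``for small $\eps$'' qualifiers and the reduction to bounded coefficients.
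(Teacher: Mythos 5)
Your final, ``fixed'' argument is essentially the paper's proof: running the drift-crossing argument on a ball whose radius is proportional to $\Delta$ and then pushing the process out of $N$ with the deterministic flow is exactly the paper's device $h_\Delta=\frac{a_1\Delta}{4|q_{i^*i^*}|}$, followed by propagation via Assumption \ref{asp1}(v) together with Lemma \ref{lm2.2} and the upgrade via Lemma \ref{lm3.1}; your preliminary fixed-radius version and the burn-in of length $\eps C_0$ for the chain are only cosmetic differences from the paper's ``reach $i^*$ within time $1$ with probability $>1/2$'' step. Two steps need tightening. First, the inference $\PP(A_1\cap A_2)\ge\tfrac12\PP(A_1)$ does not follow from $\PP(A_1)\ge(\nu_{i^*}/2)e^{-|q_{i^*i^*}|t_0/\eps}$ and $\PP(A_2)\ge1/2$, because $A_2$ is built from $X^{\eps,\delta}$ and hence is not independent of the chain event $A_1$ (and the crude bound $\PP(A_1)-\PP(A_2^c)$ is useless since $\PP(A_1)$ is exponentially small); you must condition on the chain path, use the independence of $\alpha^\eps$ and $W$, and apply the exponential martingale inequality conditionally, uniformly over chain paths --- this is precisely how the paper phrases it (``given that $\alpha^\eps(t)=i^*$ on the interval, $X^{\eps,\delta}$ has the law of $Z^\delta$''). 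Second, your final assembly multiplies ``exit $B_r$ with probability $>1/2$'' (from the Lemma \ref{lm3.1} upgrade, which incidentally is stated for exit from $N$ and must be re-proved verbatim for $B_r$) by ``propagate out of $N$ with probability $1-o(1)$''; a product of a number barely above $1/2$ with $1-o(1)$ need not exceed $1/2$. The clean repair is the paper's ordering: first combine the crossing and propagation steps by the strong Markov property to get $\PP\{\tau^{\eps,\delta}_{x,i}\le \ell\}\ge c\,e^{-\Delta/(2\eps)}$ for \emph{all} $(x,i)\in N\times\M$, and only then apply Lemma \ref{lm3.1} to the exit time from $N$ itself (alternatively, note that the proof of Lemma \ref{lm3.1} actually yields a probability close to $1-e^{-1}$, which absorbs the $1-o(1)$ factor). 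With these routine adjustments your proof coincides with the paper's.
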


\begin{proof}
Let $\beta\in\R^d$, $|\beta|=1$ such that $\beta'f(0, i^*)>0$. Suppose that $\beta'f(x, i^*)>a_1>0$ for all $x\in S$.
We
 need only
work with a  small $\Delta$, so we can suppose  $h_\Delta:=\dfrac{a_1\Delta}{4|q_{i^*i^*}|}<c_3$.
 First, we consider the case $\alpha^\eps(0)=i^*$. Because of the independence
  of $\alpha^\eps(\cdot)$ and $W(\cdot)$, given that $\alpha^\eps(t)=i^*, \ \forall  t\in\Big[0, \frac{\Delta}{|q_{i^*i^*}|}\Big]$, $X^{\eps, \delta}_{x, i^*}(\cdot)$ has the same distribution on $\Big[0, \frac{\Delta}{|q_{i^*i^*}|}\Big]$ as the solution $Z^{\delta}_x$ of the equation
$$dZ(t)=f(Z(t), i^*)dt+\sqrt{\delta}\sigma(Z(t), i^*)dW(t).$$
Put $$\rho^{\eps,\delta}_x=\dfrac{\Delta}{|q_{i^*i^*}|}\wedge \inf\{t>0: |Z_x^\delta(t))|\geq h_\Delta\}.$$
If $|x|\geq h_\Delta$ then $\rho^{\eps,\delta}_x=0$. Hence, we just consider the case $|x|< h_\Delta$.
We have
$$\beta'Z_x^\delta(\rho_x^{\eps,\delta})=\beta'x+\int_0^{\rho^{\eps,\delta}_x}\beta'f(Z_x^\delta(s), i^*)ds+\int_0^{\rho^{\eps,\delta}_x}\sqrt{\delta}\beta'\sigma(Z_x^\delta(s), i^*)dW(s).$$
By the exponential martingale inequality,
\bea \ad
\PP\Big\{-\int_0^{\rho^{\eps,\delta}_x}\sqrt{\delta}\beta'\sigma(Z_x^\delta(s), i^*)dW(s)
\\ \aad \quad
-\dfrac1{\sqrt{\delta}}\int_0^{\rho^{\eps,\delta}_x}\delta\beta'\sigma(Z_x^\delta(s), i^*)\sigma(Z_x^\delta(s), i^*)'\beta ds>\sqrt{\delta}\Big\}
%\\ \aad \qquad
\leq e^{-2}<1/2.\eea
Thus
\bea \ad \PP\Big\{\beta'Z_x^\delta(\rho_x^{\eps,\delta})>\beta'x+\int_0^{\rho^{\eps,\delta}_x}\beta'f(Z_x^\delta(s), i^*)ds\\
\aad \quad -\dfrac1{\sqrt{\delta}}\int_0^{\rho^{\eps,\delta}_x}\beta'\delta\sigma(Z_x^\delta(s), i^*)'\sigma(Z_x^\delta(s), i^*)\beta ds-\sqrt{\delta}\Big\}
%\\ \aad \quad
> 1/2.\eea
Consequently, there is some positive constant $M_3$ such that
\begin{equation}\label{eq3.1}\barray\ad
\PP\Big\{|Z_x^\delta({\rho^{\eps,\delta}_x})|\geq\beta'Z_x^\delta({\rho^{\eps,\delta}_x})>-|\beta||x|+ \int_0^{\rho^{\eps,\delta}_x}a_1ds-M_3\sqrt{\delta} \\
\aad \qquad \ge -h_\Delta+a_1{\rho^{\eps,\delta}_x}-M_3\sqrt{\delta}\Big\}> 1/2.
\earray\end{equation}
Let $\delta$ be so small that $M_3\sqrt{\delta}<\dfrac{a_1}2\dfrac{\Delta}{|q_{i^*i^*}|}$.
 If $\rho^{\eps,\delta}_x=\dfrac{\Delta}{|q_{i^*i^*}|}$,  we have
$$|Z_x^\delta({\rho^{\eps,\delta}_x})|\leq h_\Delta< -h_\Delta+a_1{\rho^{\eps,\delta}_x}-M_3\sqrt{\delta}.$$
We
%therefore
claim from \eqref{eq3.1} and the above fact that for sufficiently small $\delta$,
$$\PP\Big\{\inf\{t>0: |Z_x^\delta(t)|>h_\Delta\}< \dfrac{\Delta}{|q_{i^*i^*}|}\Big\}>1/2\,\forall x: |x|\leq h_\Delta$$
which implies
$$\PP\bigg\{\eta^{\eps, \delta}_{x, i^*}\leq \dfrac{\Delta}{q_{i^*i^*}}\bigg|\alpha(t)=i^*, \ \forall  t\in\Big[0,\dfrac{\Delta}{|q_{i^*i^*}|}\Big]\bigg\}>1/2\,\forall x: |x|\leq h_\Delta,$$
where $\eta^{\eps, \delta}_{x, i}=\inf\Big\{t: \big|X^{\eps,\delta}_{x, i}(t)\big|>h_\Delta\Big\}.$
Consequently, $$\PP\Big\{\eta^{\eps, \delta}_{x, i^*}\leq \dfrac{\Delta}{|q_{i^*i^*}|}\Big\}> \dfrac12\PP\Big\{\alpha^\eps(t)=i^*, \ \forall  t\in\Big[0, \dfrac{\Delta}{|q_{i^*i^*}|}\Big]\Big\}=\dfrac12\exp(-\dfrac{\Delta}{\eps}).$$
Since $\alpha^\eps(t)$ is ergodic, for any sufficiently small $\eps$, $$\PP\{\alpha^\eps(t)=i^* \mbox{ for some } t\in [0, 1]|\alpha^\eps(0)=i\}>{1 \over 2}, \ \forall i\in\M.$$ By the strong Markov property,
\begin{equation}
\PP\left\{\eta^{\eps, \delta}_{x, i}<1+\dfrac{\Delta}{|q_{i^*i^*}|}\right\}\geq {1\over 4}\exp\left(-\dfrac{\Delta}{\eps}\right), \ \forall \,(x,i)\in N\times\M.
\end{equation}
It is easy to see that if $|y|\geq h_\Delta,$ by  (v) of Assumption \ref{asp1}, there exists a $T_{h_\Delta}>0$ such that $\big|\bar X_y\big(T_{h_\Delta}\big)\big|\notin G .$
In view of Lemma \ref{lm2.2}, for $\eps$
sufficiently small,
\begin{align*}
\PP\Big\{&\Big|X^{\eps,\delta}_{y, i}\big(T_{h_\Delta}\big)-\bar X_y\big(T_{h_\Delta}\big)\Big|<\gamma\Big\}> {1 \over 2}, \ \forall  y\in S, i\in \M.
\end{align*}
Since $\bar X_y\big(T_{h_\Delta}\big)\notin G $, it follows from the construction of $G $ and $N$,  when $\eps$ is small, $\PP\Big\{X^{\eps,\delta}_{y, i}\big(T_{h_\Delta}\big)\notin N\Big\}>1/2$ provided that $ h_\Delta\leq|y|\leq c_2.$ As a result,
\begin{equation}
\PP\left\{\tau^{\eps, \delta}_{y, i}<1+\dfrac{\Delta}{|q_{i^*i^*}|}+T_{h_\Delta}\right\}> {1 \over 8}\exp\left(-\dfrac{\Delta}{\eps}\right), \ \forall  y\in N
\end{equation}

Using Lemma \ref{lm3.1},
for sufficiently small $\eps$,
$$\PP\left\{\tau^{\eps, \delta}_{x, i}<H^\Delta_1\exp\Big(\dfrac{\Delta}{\eps}\Big)\right\}\geq {1\over 2}\,\forall \,(x,i)\in N\times\M,$$
where $H^\Delta_1:=8\Big(1+\dfrac{\Delta}{|q_{i^*i^*}|}+T_{h_\Delta}\Big).$
\end{proof}

\begin{lm}\label{lm3.3}
Suppose that $\lim\limits_{\eps\to0}{\delta\over\eps}=l>0$ and $f(0, i)=0$ for all $i$, and there is an $i^*$ satisfying $\sigma(0, i^*)\ne 0.$ Then, for every $\Delta>0$, there are $H^\Delta_2>0$ and $\eps_{2}(\Delta)>0$ such that for $\eps<\eps_{2}(\Delta)$,
$$\PP\left\{\tau^{\eps, \delta}_{x,i}<H^\Delta_2\exp\Big(\dfrac{\Delta}\delta\Big)\right\}>1/2, \ \forall  \,(x,i)\in N\times\M.$$
\end{lm}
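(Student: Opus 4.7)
I would follow the template of Lemma \ref{lm3.2}, replacing the deterministic drift push by a diffusion excursion and balancing the additional Gaussian cost against the switching cost. Fix $\beta\in\R^d$ with $|\beta|=1$ and $a_2:=|\beta'\sigma(0,i^*)|^2>0$; by continuity $|\beta'\sigma(x,i^*)|^2\ge a_2/2$ on a neighborhood $\{|x|\le c_4\}$. Since $f(0,i)=0$ and $f(\cdot,i)$ is locally Lipschitz, $|f(x,i)|\le L|x|$ there. Given $\Delta>0$, I choose a small fixed time $T_0=T_0(\Delta)$ and a small threshold $h_\Delta\in(0,c_4\wedge c_3)$ so that $LT_0\le 1$ and
\[|q_{i^*i^*}|\,l\,T_0 \;+\; \frac{9\,h_\Delta^2}{a_2 T_0} \;<\; \Delta;\]
both summands can be driven to zero independently as the parameters shrink. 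Truncating $f,\sigma$ outside a large ball as in Lemma \ref{lm2.1}, I may assume they are globally bounded.

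\textbf{Single-trial lower bound.} Let $A=\{\alpha^\eps(t)=i^*,\,t\in[0,T_0]\}$. Because $\alpha^\eps$ is independent of $W$, on $A$ the process $X^{\eps,\delta}_{x,i^*}$ is distributed on $[0,T_0]$ as the solution $Z^\delta_x$ of $dZ=f(Z,i^*)dt+\sqrt{\delta}\,\sigma(Z,i^*)dW$, and $\PP\{A\}=\exp(-|q_{i^*i^*}|T_0/\eps)\ge\exp(-(|q_{i^*i^*}|lT_0+o(1))/\delta)$ since $\delta/\eps\to l$. For $|x|\le h_\Delta$ set $\eta:=\inf\{t:|Z(t)|\ge h_\Delta\}$ and $M_t:=\int_0^t\beta'\sigma(Z,i^*)dW$. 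The key step is the lower bound
\[\PP\{\eta\le T_0\}\;\ge\;c_0\sqrt{\delta}\,\exp\!\Big(-\frac{9h_\Delta^2}{a_2T_0\delta}\Big).\]
To obtain it I enlarge the probability space with an independent Brownian motion to define a continuous martingale $\tilde M$ that equals $M$ on $[0,\eta]$ and continues beyond $\eta$ at quadratic-variation rate $a_2/2$; then $\langle\tilde M\rangle_t\ge (a_2/2)\,t$ for all $t$. By Dambis--Dubins--Schwarz and the reflection principle,
\[\PP\!\Big\{\sup_{0\le t\le T_0}\sqrt{\delta}\,\tilde M_t\ge 3h_\Delta\Big\}\;\ge\;c_0\sqrt{\delta}\,\exp\!\Big(-\frac{9h_\Delta^2}{a_2T_0\delta}\Big).\]
On the latter event, if $\eta>T_0$ then $\tilde M=M$ on $[0,T_0]$ and the confinement $|Z|\le h_\Delta$ there yields $\big|\int_0^{t^*}\beta'f\,ds\big|\le Lh_\Delta T_0\le h_\Delta$, whence $\beta'Z(t^*)\ge-|x|-h_\Delta+3h_\Delta\ge h_\Delta$ at the time $t^*\le T_0$ realizing the supremum, contradicting $|Z(t^*)|<h_\Delta$. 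Hence the event forces $\eta\le T_0$, and combining with $\PP\{A\}$ and the parameter choice gives $\PP\{\eta^{\eps,\delta}_{x,i^*}\le T_0\}\ge c_1\exp(-\Delta/\delta)$ for $|x|\le h_\Delta$, where $\eta^{\eps,\delta}_{x,i^*}:=\inf\{t:|X^{\eps,\delta}(t)|\ge h_\Delta\}$.

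\textbf{Extension and conclusion.} Ergodicity of $\alpha^\eps$ yields $\PP\{\alpha^\eps(t)=i^*\text{ for some }t\in[0,1]\}>1/2$ for $\eps$ small, and the strong Markov property gives $\PP\{\eta^{\eps,\delta}_{x,i}<1+T_0\}\ge (c_1/2)\exp(-\Delta/\delta)$ for every $i\in\M$ (the case $|x|\ge h_\Delta$ being trivial). Once $|X^{\eps,\delta}|\ge h_\Delta$, Assumption \ref{asp1}(v) applied to $\{h_\Delta\le|y|\le c_3\}$ supplies $T_{h_\Delta}$ with $\bar X_y(T_{h_\Delta})\notin G$, and Lemma \ref{lm2.2} then gives $|X^{\eps,\delta}(T_{h_\Delta})-\bar X(T_{h_\Delta})|<\gamma<c_2-c_3$ with probability $>1/2$, so $X^{\eps,\delta}(T_{h_\Delta})\notin N$. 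Concatenating,
\[\PP\{\tau^{\eps,\delta}_{x,i}\le 1+T_0+T_{h_\Delta}\}\;\ge\;c_2\exp(-\Delta/\delta)\quad\text{for all }(x,i)\in N\times\M,\]
and Lemma \ref{lm3.1} applied with $a^{\eps,\delta}=c_2\exp(-\Delta/\delta)$ and $\ell=1+T_0+T_{h_\Delta}$ delivers the claim with $H_2^\Delta:=\ell/c_2$.

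\textbf{Main difficulty.} The hard part is the inner-ball exit estimate: the exit is genuinely random rather than driven by a deterministic push as in Lemma \ref{lm3.2}, so one must extract a sharp Gaussian lower bound of order $\exp(-\Theta(1/\delta))$. The stopped-martingale/independent-enlargement trick sidesteps the circular dependence between $\langle M\rangle$ and the event $\{\eta>T_0\}$, while the tight bookkeeping of the three scales $T_0$, $h_\Delta$, and $\Delta$ --- using $\delta/\eps\to l>0$ to express the switching cost in units of $1/\delta$ --- is what makes the two exponential penalties add up to strictly less than $\Delta$.
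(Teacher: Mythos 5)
Your proof is correct, but it takes a genuinely different route from the paper's at the key step. You pay for the switching explicitly: you condition on $\alpha^\eps\equiv i^*$ on a short window $[0,T_0]$ (cost $\exp(-|q_{i^*i^*}|T_0/\eps)\approx\exp(-|q_{i^*i^*}|lT_0/\delta)$, which is only affordable because $\delta/\eps\to l<\infty$), and then extract the exit of the frozen diffusion from the small ball via a stopped-martingale/Dambis--Dubins--Schwarz argument with the reflection principle and a Gaussian tail lower bound, balancing the two exponents $|q_{i^*i^*}|lT_0$ and $9h_\Delta^2/(a_2T_0)$ below $\Delta$ by taking $T_0$ and then $h_\Delta$ small (the polynomial prefactor $c_0\sqrt\delta$ is absorbed by the strict slack). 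The paper never freezes the chain: it time-changes the stochastic integral along the actual switching trajectory, uses the occupation-time large deviation bound $\PP\{\frac1T\int_0^T\1_{\{\alpha^\eps(s)=i^*\}}ds>\nu_{i^*}/2\}\geq1-\exp(-a_3/\eps)$ to guarantee the quadratic-variation clock reaches $1$ by time $T$, and then needs only the one-point Gaussian bound $\PP\{\sqrt\delta M(1)>\Delta\}\geq\frac12\exp(-\Delta/\delta)$, choosing the exit radius $\theta$ so that $2\theta+T\sup|f|<\Delta$. What each buys: your argument is more self-contained in the diffusion part (explicit exponent bookkeeping, no occupation-time estimate beyond the holding-time probability) and in fact only uses $f(0,i^*)=0$; the paper's argument is the one that survives in case 3 ($\delta/\eps\to\infty$, Lemma \ref{lm3.4}), where freezing the chain costs $\exp(-cT_0/\eps)=o(\exp(-\Delta/\delta))$ and your route would fail, so the time-change/occupation approach is what makes Lemmas \ref{lm3.3} and \ref{lm3.4} share one proof skeleton. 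Two cosmetic points: you reuse the symbol $c_2$ both for the radius of $G$ and for your final constant, which should be renamed, and your annulus for Assumption \ref{asp1}(v) should run from $h_\Delta$ out to (at least) $c_2$ rather than $c_3$, exactly as in \eqref{e3.6}; neither affects correctness.
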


\begin{proof}
Without loss of generality, we consider only the case  $\lim\limits_{\eps\to0}{\delta\over\eps}=1$. $\sigma(0, i^*)\ne 0$ implies that there is $\beta\in\R^d$, $|\beta|=1$ such that $\beta'\sigma(0, i^*)>0$.
Suppose that $0<a_2<\beta'(\sigma\sigma')(y, i^*)\beta$, $\forall y\in S$.
Define
$$\zeta_t:=\inf\Big\{u>0:\int_0^u\beta'(\sigma\sigma')\big(X^{\eps, \delta}_{x, i}(s\wedge\tau^{\eps,\delta}_{x, i}),\alpha^\eps(s\wedge\tau^{\eps,\delta}_{x, i})\big)\beta ds\geq t\Big\}.$$
Then
 for all $t$, $\zeta_t<\infty$ a.s. and $$M(t)=\int_0^{\zeta_t}\beta'\sigma\big(X^{\eps, \delta}_{x, i}(s\wedge\tau^{\eps,\delta}_{x, i}),\alpha^\eps(s\wedge\tau^{\eps,\delta}_{x, i})\big)dW(s)$$ is a Brownian motion. This claim stems from the fact that $M(t)$ is a continuous martingale with quadratic variation $t$. Since $M(1)$ is standard normally distributed, for $\Delta<1$ and $\delta$ is sufficiently small, we have the estimate
$$\PP\{\sqrt{\delta}M(1)>\Delta\}\geq\dfrac12\exp\Big(-\dfrac\Delta{\delta}\Big).$$
Using the large deviation principle (see \cite{HYZ}), we
%can
claim that there is $a_3=a_3(T)>0$ such that
$$\PP\left\{\dfrac{1}{T}\int_0^{T}\1_{\{\alpha^\eps(s)=i^*\}}ds>\dfrac{\nu_{i^*}}2\right\}\geq1-\exp\left(-\dfrac{a_3}{\eps}\right).$$
Let $T,
\Delta $ be such that $\dfrac{a_2\nu_{i^*}T}{2}>1$ and $\Delta<a_3$. Since $\lim\limits_{\eps\to0}{\delta\over\eps}=1$, for sufficiently small $\eps$
we have
\bea \ad \PP\left\{\int_0^{T}\beta'(\sigma\sigma')(X^{\eps,\delta}_{x, i}(s\wedge\tau^{\eps,\delta}_{x, i}))\beta ds
 \geq \dfrac{a_2\nu_{i^*}T}{2} \right\}\geq1-\exp\left(-\dfrac{a_3}{\eps}\right)\geq 1-\dfrac14\exp\left(-\dfrac{\Delta}{\delta}\right),\eea
so
$$\PP\{\zeta_{1}\leq T\}\geq1-\dfrac14\exp\left(-\dfrac{\Delta}{\delta}\right).$$
Since $f(0, i)=0, \ \forall  i\in\M$, we can find a $\theta=\theta(\Delta)>0$ such that $\{y:|y|\leq\theta\}\subset N$ and that $2\theta+T\times\sup\limits_{|x|<\theta, i\in\M}\{|f(x, i)|\}<\Delta.$
Let $\eta^{\theta}_{x,i}=\inf\{t>0: |X^{\eps,\delta}_{x,i}(t)|\geq\theta\}$.
Consider only the case $|x|<\theta$. Note that if $\sqrt{\delta}M(1)>\Delta$ and $\zeta_1\leq T$, we must have
$\eta^\theta_{x, i}<\zeta_1\leq T.$
Indeed, if the three events $\{\sqrt{\delta}M(1)>\Delta\}$, $\{\zeta_1\leq T\}$, and $\{\eta^\theta_{x, i}\geq\zeta_1\}$ happen simultaneously, it results in a contradiction that
\begin{align*}\Delta
& <\sqrt{\delta}M(1)=\sqrt{\delta}\int_0^{\zeta_1}\beta'\sigma(X^{\eps,\delta}_{x, i}(s), \alpha^\eps(s))dW(s)\\
& \leq |\beta'X^{\eps, \delta}_{x, i}(\zeta_1)|+|\beta'x|+\Big|\int_0^{\zeta_1}\beta'f(X^{\eps, \delta}_{x, i}(s),\alpha^\eps(s))ds\Big|\\
& \leq2\theta+\int_0^{\zeta_1}|f(X^{\eps, \delta}_{x, i}(s),\alpha^\eps(s))|ds< \Delta.
\end{align*}
Thus
\begin{equation}\label{e3.5}
\PP\{\eta^{\theta}_{x,i}<T\}\geq\PP\{\sqrt{\delta}M(1)>\Delta, \zeta_1\leq T\} \geq\dfrac14\exp(-\dfrac\Delta\delta)\,\forall\, |x|<\theta, i\in\M.
\end{equation}
By (v) of Assumption \ref{asp1}, there is a $T_\theta$ such that $|\bar X_y(T_\theta)|\geq c_2$ for all $y$ satisfying $|y|\geq\theta$.
In view of Lemma \ref{lm2.2}, when $\eps$ is sufficiently small, for all $\theta\leq |y|\leq c_2$,
\begin{equation}\label{e3.6}
\PP\{\tau^{\eps,\delta}_{y, i}<T_\theta\}\geq\PP\big\{\big|X^{\eps,\delta}_{y, i}(T_\theta)-\bar X_y(T_\theta)\big|<\gamma\big\}>1/2.
\end{equation}
Applying the strong Markov property,
\eqref{e3.5} and \eqref{e3.6} yield
$$\PP\big\{\tau^{\eps,\delta}_{x, i}\leq T+T_\theta\big\}\geq\dfrac18\exp\Big(-\dfrac\Delta\delta\Big)\,\forall\,(x,i)\in N\times\M.$$
Let $H^\Delta_2:=8(T+T_\theta).$
Then
it  follows from Lemma \ref{lm3.1} that when $\eps<\eps_2(\Delta)$ for some positive $\eps_2(\Delta)$,
\begin{equation}
\PP\Big\{\tau^{\eps,\delta}_{x, i}\leq H^\Delta_2\exp(\dfrac\Delta\delta)  \Big\}\geq {1\over 2}\,\forall\,(x,i)\in N\times\M.
\end{equation}
\end{proof}

\begin{lm}\label{lm3.4}
Suppose that $\lim\limits_{\eps\to0}\dfrac\delta\eps=\infty$. Moreover, $\sigma(0, i^*)\ne 0$. For every $\Delta>0$, there is
 an $H^\Delta_3>0$ and an $\eps_3(\Delta)>0$ such that for  $\eps<\eps_3(\Delta)$,
$$\PP\Big\{\tau^{\eps, \delta}_{x,i}<H^\Delta_3\exp\big(\dfrac{\Delta}\delta\big)\Big\}>\frac12\,\forall\,(x,i)\in N\times\M.$$
\end{lm}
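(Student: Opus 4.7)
My plan is to reproduce the structure of Lemma \ref{lm3.3} almost verbatim, since the only change needed is at the step where the large-deviation bound on the occupation time of $\alpha^\eps(\cdot)$ is compared with the target $\exp(-\Delta/\delta)$. In Lemma \ref{lm3.3} the identification $\delta\sim\eps$ forced the restriction $\Delta<a_3$; in the present regime $\eps/\delta\to 0$ makes the switching error $\exp(-a_3/\eps)$ negligible compared with $\exp(-\Delta/\delta)$ for every fixed $\Delta>0$, so that step simplifies rather than becoming harder.

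Concretely, I would proceed as follows. First, choose a unit vector $\beta$ with $\beta'\sigma(0,i^*)\neq 0$ and, by shrinking $S$ (and consequently $G$, $N$) and invoking continuity, obtain $a_2>0$ with $\beta'(\sigma\sigma')(y,i^*)\beta>a_2$ for all $y\in S$. Introduce the same time change $\zeta_t$ as in Lemma \ref{lm3.3}, which produces a standard Brownian motion
$$M(t)=\int_0^{\zeta_t}\beta'\sigma\bigl(X^{\eps,\delta}_{x,i}(s\wedge\tau^{\eps,\delta}_{x,i}),\alpha^\eps(s\wedge\tau^{\eps,\delta}_{x,i})\bigr)dW(s).$$
The Gaussian tail gives $\PP\{\sqrt{\delta}M(1)>\Delta\}\geq\tfrac12\exp(-\Delta/\delta)$ for small $\delta$. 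The LDP from Lemma \ref{lm3.3} then provides $T$ with $a_2\nu_{i^*}T/2>1$ and $\PP\{\zeta_1\leq T\}\geq 1-\exp(-a_3/\eps)\geq 1-\tfrac14\exp(-\Delta/\delta)$ for all sufficiently small $\eps$, where the last step uses $\eps/\delta\to 0$ and holds regardless of how $\Delta$ compares with $a_3$. Choosing $\theta=\theta(\Delta)$ with $2\theta+T\sup_{|x|<\theta,\,i\in\M}|f(x,i)|<\Delta$, the contradiction argument of Lemma \ref{lm3.3} gives $\PP\{\eta^\theta_{x,i}<T\}\geq\tfrac14\exp(-\Delta/\delta)$ for $|x|<\theta$. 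Coupling this with Lemma \ref{lm2.2} and Assumption \ref{asp1}(v) via the strong Markov property yields $\PP\{\tau^{\eps,\delta}_{x,i}\leq T+T_\theta\}\geq\tfrac18\exp(-\Delta/\delta)$ uniformly on $N\times\M$, and a final application of Lemma \ref{lm3.1} with $\ell=T+T_\theta$ and $a^{\eps,\delta}=\tfrac18\exp(-\Delta/\delta)$ completes the argument with $H_3^\Delta=8(T+T_\theta)$.

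The main technical point to watch is the Gaussian tail $\PP\{\sqrt{\delta}M(1)>\Delta\}\geq\tfrac12\exp(-\Delta/\delta)$: since $\PP\{N(0,1)>\Delta/\sqrt{\delta}\}$ behaves like $(\sqrt{\delta}/(\Delta\sqrt{2\pi}))\exp(-\Delta^2/(2\delta))$, this inequality really requires $\Delta<2$. This is not a true obstacle: for $\Delta\geq 2$ the conclusion of the lemma is strictly weaker than for any $\Delta'\in(0,2)$, so one applies the result for such a $\Delta'$ and absorbs the extra factor into the constant $H_3^\Delta$. Beyond this minor bookkeeping, the proof is a faithful transcription of Lemma \ref{lm3.3}, with the regime $\delta/\eps\to\infty$ doing nothing but making the large-deviation comparison automatic.
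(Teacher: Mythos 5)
There is a genuine gap, and it sits exactly at the step you describe as ``a faithful transcription of Lemma \ref{lm3.3}.'' Lemma \ref{lm3.3} carries the extra hypothesis $f(0,i)=0$ for \emph{every} $i\in\M$, and that hypothesis is what makes your choice of $\theta$ with $2\theta+T\sup_{|x|<\theta,\,i\in\M}|f(x,i)|<\Delta$ possible. Lemma \ref{lm3.4} does not assume this: under Assumption \ref{aspc3} the only information at the critical point is $\bar f(0)=\sum_i f(0,i)\nu_i=0$, while individual $f(0,i)$ may be nonzero. Since $T$ is pinned down by the requirement $a_2\nu_{i^*}T/2>1$ and cannot be shrunk, $\sup_{|x|<\theta,\,i}|f(x,i)|\to\max_i|f(0,i)|$ as $\theta\to0$, so your smallness condition on $\theta$ is unattainable whenever some $f(0,i)\neq0$ and $T\max_i|f(0,i)|\geq\Delta$; the contradiction chain $\Delta<\sqrt{\delta}M(1)\leq 2\theta+\int_0^{\zeta_1}|f(X^{\eps,\delta}_{x,i}(s),\alpha^\eps(s))|\,ds<\Delta$ then breaks at the last inequality. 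Falling back on Lemma \ref{lm3.2} in that situation does not rescue the claim either: it only yields the time scale $\exp(\Delta/\eps)$, which in the regime $\delta/\eps\to\infty$ is far larger than the $\exp(\Delta/\delta)$ asserted in Lemma \ref{lm3.4} (and is too large for the $T^{\eps,\delta}_\Delta\exp(-k/(\eps+\delta))\to0$ requirement in Section \ref{sec:4}).

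The missing idea, which is the actual new content of the paper's proof, is to split the drift as $f(x,i)=f(0,i)+\bigl(f(x,i)-f(0,i)\bigr)$. Because $\bar f(0)=0$, the large deviation principle for the fast chain gives an event $A=\bigl\{\bigl|\int_0^u f(0,\alpha^\eps(s))\,ds\bigr|<\Delta/2,\ \forall\, 0\leq u\leq T\bigr\}$ with $\PP(A)\geq 1-\exp(-\kappa/\eps)$, and $\theta$ is chosen so that $2\theta+T\sup_{|x|<\theta}|f(x,i)-f(0,i)|<\Delta/2$ (this supremum does vanish as $\theta\to0$). The contradiction argument is then run on $\{\sqrt{\delta}M(1)>\Delta\}\cap A$, and only here does the regime $\eps/\delta\to0$ enter in the way you anticipated: $\exp(-\kappa/\eps)$ is negligible against $\tfrac12\exp(-\Delta/\delta)$, giving $\PP\{\eta^\theta_{x,i}<T\}\geq\tfrac14\exp(-\Delta/\delta)$, after which your concluding steps (Lemma \ref{lm2.2}, Assumption \ref{asp1}(v), strong Markov property, Lemma \ref{lm3.1}, $H^\Delta_3=8(T+T_\theta)$) are fine. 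Your side remark about the Gaussian tail requiring small $\Delta$ is correct and harmless, but as written your proof only establishes Lemma \ref{lm3.3} under a hypothesis Lemma \ref{lm3.4} does not grant.
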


\begin{proof}
Let $a_2, M(t), T, \zeta_1$ be as in the proof of Lemma \ref{lm3.3}.
We have  $$\PP\big\{\zeta_1\leq T\big\}\geq 1-\exp\Big(-\dfrac{a_3}\eps\Big) \ \hbox{ for some  } \ a_3>0.$$
Since $\bar f(0)=0$, we can apply the large deviation principle (see \cite{HYZ}) to show that
there is $\kappa=\kappa(\Delta)>0$ satisfying
$$\PP\big\{A\big\}\geq 1-\exp\Big\{-\dfrac{\kappa}\eps\Big\}\mbox{ where } A=\left\{\left|\int_0^{u}f(0,\alpha^\eps(s))ds\right|<\dfrac{\Delta}2, \ \forall  0\leq u\leq  T\right\}.$$
We can choose $\theta=\theta(\Delta)>0$ such that $\{y:|y|\leq\theta\}\subset N$ and that $$2\theta+T\times\sup\limits_{|x|<\theta}\{|f(x, i)-f(0, i)|\}<\dfrac{\Delta}2.$$
Note that
\bea
M(1) \ad =\int_0^{\zeta_1}\beta'\sigma(X^{\eps,\delta}_{x, i}(s), \alpha^\eps(s))dW(s)\\
\ad \leq |\beta'X^{\eps, \delta}_{x, i}(\zeta_1)|+|\beta'x|+\Big|\int_0^{\zeta_1}\beta' f(0,\alpha^\eps(s))ds\Big|\\
\aad \ +\int_0^{\zeta_1}\big|\beta'\big(f(X^{\eps, \delta}_{x, i}(s),\alpha^\eps(s))-f(0,\alpha^\eps(s))\big)\big|ds.
\eea
Using arguments similar to those in the proof of Lemma \ref{lm3.3}, we can prove that
$$ \{\eta^\theta_{x, i}\leq T\}\supset\{\sqrt{\delta}M(1)>\Delta\}\cap A\,\forall\,|x|<\theta, i\in\M.$$
Since $\PP\{\delta M(1)>\Delta\}\geq\dfrac12\exp\Big(-\dfrac{\Delta}\delta\Big)$,
we have
\begin{align*} \PP\Big\{\eta^\theta_{x, i}<T\Big\}
 &\geq\PP\{\sqrt{\delta}M(1)>\Delta\}-\big(1-\PP(A)\big)\\
& \geq \dfrac12\exp\big(-\dfrac{\Delta}\delta\big)-\exp\Big(-\dfrac{\kappa}\eps\Big)\,\forall\,|x|<\theta, i\in\M.
\end{align*}
It follows from the hypothesis $\lim\limits_{\eps\to0}\dfrac\delta\eps=0$ that
$$\PP\big\{\eta^\theta_{x, i}<T\big\}\geq\dfrac14\exp\Big(-\dfrac{\Delta}\delta\Big)\,\forall\,|x|<\theta, i\in\M$$
for  sufficiently small $\eps$.
The desired result now can be obtained similar to that of the proof of Lemma \ref{lm3.3}.
\end{proof}

\section{Three Cases}\label{sec:4}
This section provides the proofs of the convergence of $\mu^{\eps,\delta}$ for the three cases
given in \eqref{eq:ep-dl}.
We first state the respective assumptions needed.

\begin{asp}\label{aspc1} {\rm
Assume $\lim\limits_{\eps\to0}\dfrac{\delta}\eps=l>0$. Suppose further that for any critical point $x^*$ of $\bar f(x)$, there is an $i^*$ such that either $f(x^*, i^*)\ne 0$ or $\sigma(x^*, i^*)\ne0$. In what follows, we suppose without loss of generality that $l=1$.}
\end{asp}

\begin{asp}\label{aspc2} {\rm
Assume $\lim\limits_{\eps\to0}\dfrac{\delta}\eps=0$. Moreover, for any critical point $x^*$ of $\bar f(x)$, there is an $i^*$ such that $f(x^*, i^*)\ne0$.
}\end{asp}

\begin{asp}\label{aspc3} {\rm
Assume $\lim\limits_{\eps\to0}\dfrac{\delta}\eps=\infty$. Furthermore, for any critical point $x^*$ of $\bar f(x)$, there is an $i^*$ such that $\sigma(x^*, i^*)\ne 0$.
}\end{asp}

Having estimates in Section 3,
we develop techniques in \cite{CH} to
 prove that for any critical point $x^*$, if one of the three above assumption holds, there is a neighborhood $N_{x^*}$ such that
$\lim\limits_{\eps\to0}\mu^{\eps,\delta}(N_{x^*})=0$.
We suppose that $x^*=0$ and $S, N, G , \gamma$ are defined as in Section 3. Assume that $\limsup\limits_{\eps\to0}\mu^{\eps,\delta}(N)>\vartheta>0$.
Let $R$
be so large that $\mu^{\eps,\delta}(B_R)>1-\dfrac{\vartheta}2$ and that $B_{R-\gamma}$ contains all critical points and the limit cycle $\Gamma$.
By (v) of Assumption \ref{asp1}, we can let $\hat T$
be such that
\begin{equation}\label{e4.1}
\begin{aligned}
\bar X_x(\hat T)&\in B_{R-\gamma}\setminus S, \ \forall  x\in B_R\setminus N.
\end{aligned}
\end{equation}
Recall that there is a $k=k(R)>0$ such that for $\eps$ sufficiently small,
\begin{equation}\label{e4.2}
\PP\left\{|X^{\eps, \delta}_{x, i}(\hat T)-\bar X_x(\hat T)|\geq\gamma\right\}<\exp\Big(-\dfrac{k}{\eps+\delta}\Big), \ \forall  |x|\leq R.
\end{equation}
Choose $\Delta<k/2$ and denote $T_\Delta^{\eps,\delta}=H^\Delta_j\exp\big(\dfrac\Delta\eps\big)$ or $T_\Delta^{\eps,\delta}=H^\Delta_j\exp\big(\dfrac\Delta\delta\big)$ $(j=1, 2$ or $3)$ as in Lemma \ref{lm3.2}, Lemma \ref{lm3.3}, or Lemma \ref{lm3.4} depending on the case we are considering. Clearly, $T_\Delta^{\eps,\delta}\exp(-\dfrac{k}{\eps+\delta})\to 0$ as $\eps\to0$.
Let $\widetilde X^{\eps, \delta}(t)$ be the stationary solution, whose distribution is $\mu^{\eps,\delta}$ at every $t$. Let  $\tau^{\eps,\delta}$
 be the first exit time of $\widetilde X^{\eps, \delta}(t)$ from $N$.
Define events
\begin{align*}
K_1^{\eps, \delta}&=\Big\{\widetilde X^{\eps, \delta}(T_\Delta^{\eps,\delta})\in N, \tau^{\eps,\delta}\geq T_\Delta^{\eps,\delta}, \widetilde X^{\eps,\delta}(0)\in N\Big\}\\
K_2^{\eps, \delta}&=\Big\{\widetilde X^{\eps, \delta}(T_\Delta^{\eps,\delta})\in N, \tau^{\eps,\delta}< T_\Delta^{\eps,\delta}, \widetilde X^{\eps,\delta}(0)\in N\Big\}\\
K_3^{\eps, \delta}&=\Big\{\widetilde X^{\eps, \delta}(T_\Delta^{\eps,\delta})\in N, \widetilde X^{\eps,\delta}(0)\in B_R\setminus N\Big\}\\
K_4^{\eps, \delta}&=\Big\{\widetilde X^{\eps, \delta}(T_\Delta^{\eps,\delta})\in N, \widetilde X^{\eps,\delta}(0)\notin B_R\Big\}.
\end{align*}
Note that $\mu^{\eps,\delta}(N)=\sum_{n=1}^4\PP\{K_n^{\eps, \delta}\}.$
We have $$\PP(K_1^{\eps, \delta})\leq \dfrac12\mu^{\eps,\delta}(N) \ \hbox{ and  }
\ \PP(K_4^{\eps, \delta})< \dfrac{\vartheta}2.$$
Now we estimate $\PP(K_3^{\eps, \delta}).$
It follows from \eqref{e4.1} and \eqref{e4.2} that
$\PP\{X^{\eps, \delta}_{x, i}(\hat T)\in B_R\setminus G \}\geq1-\exp(-\dfrac{k}{\eps+\delta})$ if $x\in B_R\setminus N$. Moreover, if $x\in B_R\setminus G $, $|\bar X_x(t)|\geq 2c_2\geq c_2+\gamma, \ \forall  t\geq0$ which implies that
$$\PP\left\{X^{\eps, \delta}_{x, i}(t)\notin N, \ \forall 0\leq t\leq\hat T\right\}\geq1-\exp\Big(-\dfrac{k}{\eps+\delta}\Big).$$
 Using the Markov property, for any $x\in B_R\setminus N, i\in\M$
\begin{equation}\label{e4.3}
\begin{aligned}
\PP\Big\{ X_{x,i}^{\eps, \delta}&(T_\Delta^{\eps,\delta})\in N\Big\}\\
=&\PP\Big\{ X_{x,i}^{\eps, \delta}(T_\Delta^{\eps,\delta})\in N,  X_{x,i}^{\eps,\delta}(\hat T)\notin B_R\setminus G \Big\}\\
&+\sum_{n=2}^{\lf T_\Delta^{\eps,\delta}/\hat T\rf }\PP\Big\{ X_{x,i}^{\eps, \delta}(T_\Delta^{\eps,\delta})\in N,  X_{x,i}^{\eps,\delta}(n\hat T)\notin B_R\setminus G ,  X_{x,i}^{\eps,\delta}(\iota\hat T)\in B_R\setminus G , \iota=1,...,n-1\Big\}\\
&+\PP\Big\{ X_{x,i}^{\eps, \delta}(T_\Delta^{\eps,\delta})\in N,  X_{x,i}^{\eps,\delta}(\iota\hat T)\in B_R\setminus G , \iota=1,...,[T_\Delta^{\eps,\delta}/\hat T]\Big\}\\
\leq& \PP\Big\{ X_{x,i}^{\eps,\delta}(\hat T)\notin B_R\setminus G\Big\}+\sum_{n=2}^{\lf T_\Delta^{\eps,\delta}/\hat T\rf}\PP\Big\{ X_{x,i}^{\eps,\delta}(n\hat T)\notin B_R\setminus G ,  X_{x,i}^{\eps,\delta}((n-1)\hat T)\in B_R\setminus G \}
\\
&+\PP\{ X^{\eps, \delta}_{x, i}(t)\notin N, \ \forall  \lf T_\Delta^{\eps,\delta}/\hat T\rf
\leq t\leq [T_\Delta^{\eps,\delta}/\hat T]+1,  X^{\eps, \delta}_{x, i}([T_\Delta^{\eps,\delta}/\hat T])\in B_R\setminus G \}\\
\leq&\Big([T_\Delta^{\eps,\delta}/\hat T]+1\Big)\exp\Big(-\dfrac{k}{\eps+\delta}\Big)\leq\Big(T_\Delta^{\eps,\delta}/\hat T+1\Big)\exp\Big(-\dfrac{k}{\eps+\delta}\Big),
\end{aligned}
\end{equation}
where $\lf T_\Delta^{\eps,\delta}/\hat T\rf $ denotes the integer part of $T_\Delta^{\eps,\delta}/\hat T$. It  follows from \eqref{e4.3} that
$$\PP(K_3^{\eps, \delta})=\PP\Big\{\widetilde X^{\eps, \delta}(T_\Delta^{\eps,\delta})\in N, \widetilde X^{\eps,\delta}(0)\in B_R\setminus N\Big\}\leq\Big(T_\Delta^{\eps,\delta}/\hat T+1\Big)\exp\Big(-\dfrac{k}{\eps+\delta}\Big)$$
Since $\lim\limits_{\eps\to0}T_\Delta^{\eps,\delta}\exp\Big(-\dfrac{k}{\eps+\delta}\Big)=0$, we have $\PP(K_3^{\eps,\delta})\to0$ as $\eps\to0$.

To estimate $\PP(K_2^{\eps,\delta})$, note that, similar to \eqref{e4.3}, for any $s\leq T_\Delta^{\eps,\delta}$, we have
$$\PP\Big\{ X_{x,i}^{\eps, \delta}(s)\in N\Big\}\leq\Big(T_\Delta^{\eps,\delta}/\hat T+1\Big)\exp\Big(-\dfrac{k}{\eps+\delta}\Big) \forall x\in B_R\setminus N, i\in\M.$$
The strong Markov property yields
\begin{align*}
\PP(K_2^{\eps, \delta})=&\PP\Big\{\widetilde X^{\eps, \delta}(T_\Delta^{\eps,\delta})\in N, \tau^{\eps,\delta}< T_\Delta^{\eps,\delta}, \widetilde X^{\eps,\delta}(0)\in N\Big\}\\
=&\int_0^{T_\Delta^{\eps,\delta}}\PP\{\tau^{\eps,\delta}\in dt\}\left[\sum_{i\in\M}\int_{\partial N}\PP\Big\{ X_{x,i}^{\eps, \delta}(T_\Delta^{\eps,\delta}-t)\in N\Big\}\PP\big\{\alpha^\eps(t)=i, \widetilde X^{\eps, \delta}(t)\in dx\big\}\right]\\
\leq&\Big(T_\Delta^{\eps,\delta}/\hat T+1\Big)\exp\Big(-\dfrac{k}{\eps+\delta}\Big)\to0\text{ as } \eps\to0.
\end{align*}
Then we have $\vartheta<\limsup\limits_{\eps\to0}\mu^{\eps,\delta}(N)\leq \dfrac12\limsup\limits_{\eps\to0}\mu^{\eps,\delta}(N)+0+0+\dfrac{\vartheta}2$. This contradiction implies that $\lim\limits_{\eps\to0}\mu^{\eps,\delta}(N)=0$.

\begin{thm}\label{thm4.1}
Let Assumption {\rm\ref{asp1}} and one of the three assumptions {\rm\ref{aspc1}}, {\rm\ref{aspc2}}, and
{\rm\ref{aspc3}} hold. In each of the three cases, the family of invariant probability measures $\mu^{\eps,\delta}$ converges weakly to the
 measure $\mu^0$ concentrated on $\Gamma$ of $\bar X(t)$ in the sense that for every bounded and continuous function $g(x, i)$, we have
$$\lim\limits_{\eps\to0}\sum_{i=1}^m\int_{\R^d} g(x, i)\mu^{\eps,\delta}(dx, i)=\dfrac1{T_\Gamma}\int_{0}^{T_\Gamma}\bar g(\bar X_y(t))dt,$$
where $T_\Gamma$ is the period of the cycle and $y\in\Gamma$ and $\bar g(x)=\sum_{i\in\M}g(x, i)\nu_i$.
\end{thm}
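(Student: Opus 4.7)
The plan is to reduce the theorem to showing that every weak subsequential limit of $\mu^{\eps,\delta}$ equals $\nu\otimes\mu^0$. Tightness in Assumption \ref{asp1}(vi), together with the finiteness of $\M$, guarantees that any sequence $\eps_n\to 0$ admits a subsequence (not relabelled) along which $\mu^{\eps_n,\delta_n}$ converges weakly to some probability measure $\mu^*$ on $\R^d\times\M$. Since $\alpha^\eps$ is an autonomous ergodic chain with stationary distribution $\nu$ (its evolution is decoupled from $X^{\eps,\delta}$), the $\M$-marginal of $\mu^{\eps,\delta}$ equals $\nu$ for every $\eps$, hence so does the $\M$-marginal of $\mu^*$.

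Next I would identify the $\R^d$-marginal $\pi^*$ of $\mu^*$ with $\mu^0$. For any bounded continuous $F:\R^d\to\R$ and any fixed $T>0$, invariance of $\mu^{\eps,\delta}$ gives
$$\int F(x)\,\pi^{\eps,\delta}(dx)=\sum_{i\in\M}\int \E\, F\big(X^{\eps,\delta}_{x,i}(T)\big)\,\mu^{\eps,\delta}(dx,i).$$
Fix $\eta>0$; choose $R$ large enough that tightness yields $\mu^{\eps,\delta}(B_R^c)<\eta$, cover the finitely many critical points in $B_R$ by small neighborhoods $N_j$, and note that the discussion preceding the theorem shows $\mu^{\eps,\delta}(\cup N_j)<\eta$ for all small $\eps$. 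Off $\cup N_j\cup B_R^c$, Lemma \ref{lm2.2} makes $\E F(X^{\eps,\delta}_{x,i}(T))$ uniformly close to $F(\bar X_x(T))$ up to an error $2\|F\|_\infty\exp(-k/(\eps+\delta))$. Sending $\eps\to 0$ and then $\eta\to 0$ yields
$$\int F\,d\pi^*=\int F(\bar X_x(T))\,d\pi^*(x)\quad\text{for every }T>0,$$
so $\pi^*$ is invariant under the averaged flow. Because $\pi^*$ places no mass on any neighborhood of a critical point, Assumption \ref{asp1}(v) drives the support of $\pi^*$ under $\bar X(\cdot)$ into every neighborhood of $\Gamma$; combined with flow-invariance, this forces $\pi^*$ to be concentrated on $\Gamma$. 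Since $\Gamma$ is a single periodic orbit of period $T_\Gamma$, the only invariant probability measure of $\bar X(\cdot)$ on $\Gamma$ is the normalized arc-length $\mu^0$, so $\pi^*=\mu^0$.

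To upgrade from marginals to the full joint limit, I repeat the invariance argument with an arbitrary bounded continuous test function $g(x,i)$:
$$\int g\,d\mu^{\eps,\delta}=\sum_{j\in\M}\int \E\, g\big(X^{\eps,\delta}_{x,j}(T),\alpha^\eps_j(T)\big)\,\mu^{\eps,\delta}(dx,j).$$
For fixed $T>0$, Lemma \ref{lm2.2} handles the $X$-component outside neighborhoods of critical points, while exponential ergodicity of the $\eps$-accelerated chain, namely $|\PP(\alpha^\eps_j(T)=i)-\nu_i|=O(e^{-cT/\eps})$ uniformly in $j$, decouples $\alpha^\eps(T)$ from the Brownian motion in the limit. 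Hence
$$\E\, g\big(X^{\eps,\delta}_{x,j}(T),\alpha^\eps_j(T)\big)\longrightarrow\sum_{i}\nu_i\,g(\bar X_x(T),i)=\bar g(\bar X_x(T))$$
uniformly for $x$ in the good set. Removing the small error from a neighborhood of the critical points as before, passing to the weak limit, and using the invariance of $\mu^0$ under $\bar X(T)$, I obtain
$$\int g\,d\mu^*=\int \bar g(\bar X_x(T))\,d\mu^0(x)=\int \bar g\,d\mu^0=\frac{1}{T_\Gamma}\int_0^{T_\Gamma}\bar g(\bar X_y(t))\,dt,$$
which is independent of the extracted subsequence, so $\mu^{\eps,\delta}\Rightarrow\nu\otimes\mu^0$ as claimed.

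The step I expect to be most delicate is the uniform-in-$x$ decoupling of $(X^{\eps,\delta}_{x,j}(T),\alpha^\eps_j(T))$ into its averaged-flow value and the stationary distribution $\nu$. The near-critical-point contribution is already controlled by the vanishing-mass estimate from Section \ref{sec:4}, and Lemma \ref{lm2.2} supplies the required uniform deterministic approximation for the $X$-component; the remaining piece is the quantitative fast-mixing bound for $\alpha^\eps$, which is standard for finite-state ergodic chains whose generator has been sped up by $1/\eps$, but must be combined carefully with the Markov-property decomposition at time $T$ so that the joint expectation factorises in the limit.
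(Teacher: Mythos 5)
Your proposal is correct, and it relies on exactly the ingredients the paper assembles for this theorem --- the Section \ref{sec:4} estimate that the $\mu^{\eps,\delta}$-mass of a small neighborhood of each critical point vanishes, the uniform finite-horizon approximation of Lemma \ref{lm2.2}, tightness from Assumption \ref{asp1}(vi), and the attraction property (v) --- but it organizes the final identification differently. The paper simply defers to Holland's argument in \cite{CH}: fix a large time horizon, use invariance of $\mu^{\eps,\delta}$ to rewrite $\int g\,d\mu^{\eps,\delta}$ as an expectation at later times, replace the perturbed process by the averaged flow on the good region, and use that time averages of $\bar g$ along trajectories attracted to $\Gamma$ approach the cycle average; this yields the limit directly, with no subsequence extraction. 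You instead use Prohorov compactness, show that any subsequential limit has $\M$-marginal $\nu$ and an $\R^d$-marginal invariant under the averaged flow (for every fixed $T$), kill the mass near critical points via the portmanteau inequality for open sets, conclude concentration on $\Gamma$ from Assumption (v) combined with flow-invariance and tightness of the limit, and then invoke uniqueness of the flow-invariant probability measure on a periodic orbit to get $\mu^0$; the joint statement follows since, once the $X$-component is replaced by $\bar X_x(T)$, the only remaining randomness is $\alpha^\eps(T)$, whose law converges to $\nu$ uniformly in the initial state (no genuine decoupling from the Brownian motion is required at that point). Your route is softer and more self-contained --- it avoids quantitative long-time averaging along orbits at the price of an abstract uniqueness step --- while the paper's route produces the limit value directly. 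Two cosmetic remarks: the error in your flow-invariance step is $2\|F\|_\infty\exp\big(-k/(\eps+\delta)\big)$ \emph{plus} a modulus-of-continuity term coming from the $\gamma$ in Lemma \ref{lm2.2} (harmless, since $\gamma$ is arbitrary and $x$ ranges over a compact set), and $\mu^0$ is the normalized occupation (time) measure on $\Gamma$ rather than normalized arc-length, which does not affect the argument.
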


\begin{proof}
We have already proved that for each critical point $x^*$ of $\bar f$, we can find a neighborhood $N^*$ such that
$\lim\limits_{\eps\to0}\mu^{\eps,\delta}(N^*)=0.$
Using this fact together with Assumption \ref{asp1} and Lemma \ref{lm2.2}, we establish
the desired results in the same way as in the proof of \cite[Theorem 1]{CH}.
\end{proof}

\section{Applications to A Predator-Prey Model}\label{sec:5}
We consider a stochastic predator-prey model with regime switching
 \begin{equation}\label{ex1}
\left\{\begin{array}{lll}d{X^{\eps,\delta}}(t)=X^{\eps,\delta}(t)\varphi\big(X^{\eps,\delta}(t), Y^{\eps,\delta}(t), \alpha^\eps(t))dt+\sqrt{\delta}\lambda(\alpha^\eps(t))X^{\eps,\delta}(t)dW_1(t)\\
d{Y^{\eps,\delta}}(t)=Y^{\eps,\delta}(t)\psi\big(X^{\eps,\delta}(t), Y^{\eps,\delta}(t), \alpha^\eps(t))dt+\sqrt{\delta}\rho(\alpha^\eps(t))Y^{\eps,\delta}(t)dW_2(t),\end{array}\right.
\end{equation}
where
\bea \ad \varphi(x, y, i)=a(i)-b(i)x-yh(x, y, i) \ \hbox{
and }\\
\ad \psi(x, y, i)=-c(i)-d(i)y+f(i)xh(x, y, i), \eea
where
$a, b, c, d, f, \lambda, \rho$ are positive functions defined on $\M$,
 $\delta$ depends on $\eps$ and $\lim\limits_{\eps\to0}\delta=0$,
$W_1(t)$ and $W_2(t)$ are independent Brownian motions, and
$\alpha^\eps$ is a Markov chain with generator $Q/\eps$, which is independent of $(W_1(t), W_2(t))$.
Assume that $h(x, y, i)$ is positive, bounded, and continuous on $\M\times\R^2_+$.
Note that $xh(x, y, i)$ and
$yh(x, y, i)$ are normally called the functional responses of the predator-prey.
For instance, if $h(x, y, i)$ is constant, the model is the classical Lotka-Volterra.
If $$h(x, y, i)=\dfrac{m_1(i)}{m_2(i)+m_3(i)x+m_4(i)y},$$ the functional response is of Beddington-DeAngelis type.
Let $\bar g=\sum g(i)\nu_i$, $g_m=\min\{g(i):i\in\M\}, g_M=\max\{g(i):i\in\M\}$ where $g=a, b, c, d, f, \varphi, \psi$ and
$h_1(x, y)=\sum h(x, y, i)\nu_i$, $h_2(x, y)=\sum f(i)h(x, y, i)\nu_i.$
The existence and uniqueness of a global positive solution to \eqref{ex1} can be proved in the same manner as in \cite{JJ} or \cite{CDN}.
We denote $Z^{\eps,\delta}_{z, i}(t)=(X^{\eps, \delta}_{z, i}(t), Y^{\eps, \delta}_{z, i}(t))$ the solution to \eqref{ex1} with initial value $\alpha^{\eps}(0)=i\in\M, Z^{\eps,\delta}_{z, i}(0)=z\in\R^{2}_+.$
Consider the averaged equation
\begin{equation}\label{ex2}
\left\{\begin{array}{lll}
\disp {d \over dt}{X}(t)=X(t)\bar\varphi(X(t), X(t))
=X(t)\left[\bar a-\bar bX(t)-Y(t)h_1(X(t), Y(t))\right]\\
\disp {d \over dt}
Y(t)=Y(t)\bar\psi(X(t), Y(t)))=Y(t)\left[-\bar c-\bar d Y(t)+X(t)h_2(X(t), Y(t))\right]
.\end{array}\right.
\end{equation}
We denote by $\bar Z_z(t)=(\bar X_z(t), \bar Y_z(t))$ the solution to \eqref{ex2} with initial value $\bar Z_z(0)=z$.

\begin{asp}\label{asp5.1} {\rm
\begin{enumerate}[(i)]$\text{}$
  \item \eqref{ex2} has a finite number of positive equilibria and any positive solution not starting at an equilibrium converges to a stable limit cycle.
  \item $\frac{\bar b}{\bar a}h_2(\frac{\bar b}{\bar a}, 0)>\bar c$.
\end{enumerate}
}\end{asp}

Note that the Jacobian of
$\Big(x\bar\phi(x,y), y\bar\psi(x,y)\Big)^\top$
at $\left(\frac{\bar b}{\bar a}, 0\right)$
has two eigenvalues: $-\bar c+\frac{\bar b}{\bar a}h_2(\frac{\bar b}{\bar a}, 0)$
and $-\frac{\bar b^2}{\bar a}<0$.
If $-\bar c+\frac{\bar b}{\bar a}h_2(\frac{\bar b}{\bar a}, 0)<0$
then $\left(\frac{\bar b}{\bar a}, 0\right)$ is a stable equilibrium of \eqref{ex2}, which violates the condition (i).
Thus
 condition (ii) is often contained in (i).

We can apply theorem \ref{thm4.1} to our model if we can verify (vi) of Assumption \ref{asp1} since the other conditions are clearly satisfied.
Since the process $\alpha^\eps(t)$ is ergodic and the diffusion is nondegenerate, an invariant probability measure of the solution $Z^{\eps,\delta}(t)$ is unique if it exists.
As mentioned in the introduction, it is unlikely to find a Lyapunov-type function satisfying the hypothesis of \cite[Theorem 3.26]{YZ} in order to prove the existence of an invariant probability measure.
Note that the tightness of family of invariant probability measures cannot be proved using the method of \cite{DDT, DDY}. We overcome
 the difficulties by using a new technical tool. Dividing the domain into several parts and then constructing a suitable truncated Lyapunov-type function, we can estimate the average probability that the solution belongs to each part of our partition.
 %As a result, we will be able to obtain our desired result.
 To be more precise, instead of proving (vi) of Assumption \ref{asp1}, we derive a slightly weaker property, namely, the family of invariant probability measures being eventually tight in the sense that for any $\Delta>0$, there are  $0<\eps_0,\delta_0<1< L$ such that
for all $\eps<\eps_0,\delta<\delta_0$, the unique invariant measure $\mu^{\eps,\delta}$ of $(Z^{\eps,\delta}(t), \alpha^\eps(t))$ satisfying
$$\mu^{\eps,\delta}([ L^{-1}, L]^2)\geq 1-\Delta.$$
It can be seen in the proof in Section \ref{sec:4} that the eventual tightness is sufficient to obtain the weak convergence of $\mu^{\eps,\delta}$ to the  measure on the limit cycle.
To proceed, we first need some auxiliary results.

\begin{lm}\label{lm5.1}
There are $ K_1, K_2>0$ such that for any $0<\eps,\delta<1$ and any $(i_0, z_0)\in\M\times\R^{2,\circ}_+$
$($where $\R^{2,\circ}_+$ denotes the interior of $\R^2_+)$,
we have
$$\dfrac1t\E\int_0^t|Z_{z_0, i_0}^{\eps,\delta}(s)|^2ds\leq  K_1|z_0|+ K_1, \ \forall  t\geq1$$
and
$$\limsup\limits_{t\to\infty}\E|Z_{z_0, i_0}^{\eps,\delta}(t)|^2\leq K_2.$$
\end{lm}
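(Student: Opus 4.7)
The plan is to construct a Lyapunov function that does two things at once: (a) it is independent of $i\in\M$, so the rapid-switching generator $Q/\eps$ contributes nothing to $\Lom V$ and all bounds are automatically uniform in $\eps$; (b) it is weighted so that the bilinear predator--prey coupling $xyh(x,y,i)$ is cancelled (or rendered non-positive) in $\Lom V$. Concretely, set
$$V(x,y)=x+\frac{y}{f_M},\qquad f_M=\max_{i\in\M}f(i).$$
Since $V$ is linear in $(x,y)$ and does not depend on $i$, both the diffusion correction and the $Q$-action vanish, so
$$\Lom V(x,y,i)=x\varphi(x,y,i)+\frac{1}{f_M}y\psi(x,y,i)=a(i)x-b(i)x^2-\frac{c(i)y+d(i)y^2}{f_M}+\Big(\frac{f(i)}{f_M}-1\Big)xyh(x,y,i).$$
The last term is non-positive because $f(i)\le f_M$, and Young's inequality on $a(i)x$ yields
$$\Lom V(x,y,i)\le C_0-\eta(x^2+y^2)=C_0-\eta|z|^2$$
for constants $C_0,\eta>0$ that depend only on the model coefficients, not on $\eps,\delta$.

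For the first assertion I would apply Dynkin's formula with the standard localization $\tau_n=\inf\{s:|Z(s)|>n\}\wedge t$ and pass to the limit $n\to\infty$ by monotone convergence (using non-explosion of $Z^{\eps,\delta}$, which follows from the SDE structure). Since $V\ge0$,
$$\eta\,\E\int_0^t|Z^{\eps,\delta}_{z_0,i_0}(s)|^2ds\le V(z_0)+C_0 t\le C'|z_0|+C_0 t,$$
and dividing by $t\ge1$ produces the required bound with $K_1=(C'+C_0)/\eta$.

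For the second assertion I would apply It\^o to $V_2:=V^2=(x+y/f_M)^2$. Because $V$ does not depend on $i$, a direct computation gives
$$\Lom V_2=2V\,\Lom V+\delta\Big(\lambda^2(i)x^2+\rho^2(i)y^2/f_M^2\Big).$$
Using the lower bound $V\ge\min(1,1/f_M)|z|$ (which follows from $x+y\ge\sqrt{x^2+y^2}$), the product $2V\,\Lom V$ contributes a dominant cubic term $-2\eta\min(1,1/f_M)|z|^3$, while the diffusion correction grows only like $|z|^2$ and is uniformly controlled by $\delta<1$. A routine estimate (cubic beats quadratic for large $|z|$, bounded regions handled by constants) then yields
$$\Lom V_2\le C_1-\eta_1 V_2$$
for constants $C_1,\eta_1>0$ uniform in $\eps,\delta$. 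Dynkin (with the same localization) combined with a Gr\"onwall argument on $\phi(t):=\E V_2(Z(t))$ gives
$$\E V_2(Z(t))\le e^{-\eta_1 t}V_2(z_0)+\frac{C_1}{\eta_1}(1-e^{-\eta_1 t}),$$
and since $V_2\ge\tfrac12\min(1,1/f_M^2)|z|^2$, the second assertion follows with $K_2=2C_1/[\eta_1\min(1,1/f_M^2)]$.

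The main obstacle is the bilinear interaction term: a naive choice like $V=x+y$ leaves an indefinite cross term $(f(i)-1)xyh$ that cannot be dominated by $|z|^2$, and any choice $V(\cdot,i)$ that depends on the chain produces a $Q/\eps$-term scaling like $1/\eps$ that breaks the uniform-in-$\eps$ bounds. The weighting $1/f_M$ together with the $i$-independence of $V$ is what resolves both issues simultaneously; the rest is Dynkin--Gr\"onwall bookkeeping.
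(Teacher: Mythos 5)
Your proposal is correct and follows essentially the same route as the paper: the paper's Lyapunov function is $\hat V(x,y)=f_Mx+y$ (a scalar multiple of your $x+y/f_M$), chosen for the same reason—$i$-independence kills the $Q/\eps$ contribution and the weighting makes the $xyh$ cross term non-positive—yielding $\Lom\hat V\le \hat K_1-\theta|z|^2$, from which the time-average bound follows by the localized It\^o/Dynkin argument, and the second-moment bound follows from $\Lom(\hat V^2)\le \hat K_2-\hat V^2$ applied to $e^t\hat V^2$ (the paper's equivalent of your Gr\"onwall step). No substantive differences.
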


\begin{proof}
Let
$\theta<\min\{f_Mb(i), d(i): i\in\M\}$.
Define $$\hat  K_1=\sup\limits_{(x, y, i)\in\M\times\R^2_+}\{f_Mx(a(i)-b(i))-y(c(i)-d(i)y)+ \theta(x^2+y^2)\}<\infty.$$
Consider $\hat V(x, y, i)=f_M x+y.$
We can  check that $\mathcal{L}^{\eps,\delta}\hat V(x, y, i)\leq\hat  K_1-\theta(x^2+y^2),$
where $\mathcal{L}^{\eps,\delta}$ the operator associated with \eqref{ex1} (see \cite[p. 48]{MY} or \cite{YZ} for the formula of $\mathcal{L}^{\eps,\delta}$).
Similarly, we can verify that there is $\hat K_2>0$ such that for all $\eps<1,\delta<1$,
$\mathcal{L}^{\eps,\delta} (\hat V^2(x, y, i))\leq\hat  K_2-\hat V^2(x, y, i)$.
For each $k >0$,
define the stopping time $\sigma_k=\inf\{t: x(t)+y(t)>k\}.$
By the generalized It\^o  formula for $\hat V(x(t), y(t),\alpha^\eps(t))$.
\begin{equation}\label{ex3}
\begin{aligned}
\E \hat V(Z_{z_0, i_0}^{\eps,\delta}(t\wedge\sigma_k), \alpha^\eps(t\wedge\sigma_k))
&= \hat V(z_0, i_0)+\E\int_0^{t\wedge\sigma_k}\mathcal{L}^{\eps,\delta}\hat V(Z_{z_0, i_0}^{\eps,\delta}(s), \alpha^\eps(s))ds\\
&\leq f_M x_0+y_0+\E\int_0^{t\wedge\sigma_k}\big[\hat K_1-\theta|Z_{z_0, i_0}^{\eps,\delta}(s)|^2\big]ds.
\end{aligned}
\end{equation}
Hence
$$\theta\E\int_0^{t\wedge\sigma_k}|Z_{z_0, i_0}^{\eps,\delta}(s)|^2ds\leq f_M x_0+y_0+\hat K_1t.$$
Letting $k\to\infty$ and dividing both sides by $\dfrac\theta{t}$ we have
\begin{equation}\label{ex3b}
\dfrac1t\E\int_0^t|Z_{z_0, i_0}^{\eps,\delta}(s)|^2ds\leq \dfrac{f_M x_0+y_0}{\theta t}+\dfrac{\hat K_1}\theta.
\end{equation}
Applying the generalized It\^o  formula to $e^{t}\hat V^2(Z_{z_0, i_0}^{\eps,\delta}(t), \alpha^{\eps}(t))$.
\begin{equation}\label{ex3a}
\begin{aligned}
\E e^{t\wedge\sigma_k}&\hat V^2(Z_{z_0, i_0}^{\eps,\delta}(t\wedge\sigma_k), \alpha^{\eps}(t\wedge\sigma_k))\\
&= \hat V^2(z_0, i_0)+\E\int_0^{t\wedge\sigma_k}e^s\big[(\hat V^2(Z_{z_0, i_0}^{\eps,\delta}(s), \alpha^\eps(s))+\mathcal{L}^{\eps,\delta}\hat V^2(Z_{z_0, i_0}^{\eps,\delta}(s), \alpha^\eps(s))\big]ds\\
&\leq(f_M x_0+y_0)^2+\hat K_2\E\int_0^{t\wedge\sigma_k}e^sds\leq (f_M x_0+y_0)^2+\hat K_2 e^t.
\end{aligned}
\end{equation}
Letting $k\to\infty$, then dividing both sides by $e^t$, we have
\begin{equation}\label{ex3c}
\E \big[f_MX_{z_0, i_0}^{\eps,\delta}(t)+Y_{z_0, i_0}^{\eps,\delta}(t)\big]^2\leq (f_M x_0+y_0)^2e^{-t}+\hat K_2.
\end{equation}
The assertions follows directly from \eqref{ex3b} and \eqref{ex3c}.
\end{proof}

\begin{lm}\label{lm5.1a}
There is a $ K_3>0$ such that
$$\dfrac1t\E\int_0^t\Big[ \varphi^2(Z_{z, i}^{\eps,\delta}(s), \alpha^\eps(s))+\psi^2(Z_{z, i}^{\eps,\delta}(s), \alpha^\eps(s))\Big]ds\leq K_3(1+|z|^2)$$
for all $\eps,\delta\in (0, 1], z\in\R^2_+$.
\end{lm}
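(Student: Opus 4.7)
The plan is to reduce the claimed bound to Lemma \ref{lm5.1} by a pointwise estimate on $\varphi^2+\psi^2$.

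First, I would exploit the structural form of $\varphi$ and $\psi$. Since $a,b,c,d,f$ are finite functions on the finite state space $\M$ and $h$ is assumed positive, bounded, and continuous on $\M\times\R^2_+$, there is a constant $C_0>0$ (independent of $(x,y,i)$) such that, for every $(x,y,i)\in\R^2_+\times\M$,
\begin{equation*}
|\varphi(x,y,i)|=|a(i)-b(i)x-yh(x,y,i)|\leq C_0(1+x+y),
\end{equation*}
and similarly $|\psi(x,y,i)|\leq C_0(1+x+y)$. Squaring and summing gives a pointwise inequality of the form $\varphi^2(x,y,i)+\psi^2(x,y,i)\leq C_1(1+x^2+y^2)=C_1(1+|(x,y)|^2)$ for some $C_1>0$.

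Next I would apply this inequality to the trajectory $Z^{\eps,\delta}_{z,i}(s)$ and integrate. This yields, for every $t\geq 1$,
\begin{equation*}
\frac{1}{t}\E\int_0^t\bigl[\varphi^2(Z^{\eps,\delta}_{z,i}(s),\alpha^\eps(s))+\psi^2(Z^{\eps,\delta}_{z,i}(s),\alpha^\eps(s))\bigr]\,ds\leq C_1+\frac{C_1}{t}\E\int_0^t|Z^{\eps,\delta}_{z,i}(s)|^2\,ds.
\end{equation*}
Lemma \ref{lm5.1} bounds the last integral by $K_1|z|+K_1$, and the elementary estimate $|z|\leq \tfrac{1}{2}(1+|z|^2)$ combined with $C_1\leq C_1(1+|z|^2)$ lets me absorb everything into a single constant $K_3$ so that the right-hand side is at most $K_3(1+|z|^2)$. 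For $t\in[0,1)$ the same bound holds after possibly enlarging $K_3$, since in that range one can use instead the moment bound \eqref{ex3} (or rather its version before dividing by $t$) together with the pointwise estimate, so that the time-averaged integral is still controlled by $C(1+|z|^2)$.

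There is no real obstacle here; the entire content of the lemma is the polynomial-growth estimate on $\varphi,\psi$ and an appeal to Lemma \ref{lm5.1}. The only minor point to be careful about is combining the linear-in-$|z|$ bound from Lemma \ref{lm5.1} with the constant term and the factor $C_1$ so that the final statement is quadratic in $|z|$, and handling small $t$ separately. Both are routine.
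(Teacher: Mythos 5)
Your proposal is correct and matches the paper's own proof, which likewise bounds $\varphi^2+\psi^2$ pointwise by $C(1+|z|^2)$ using the boundedness of $h$ and then invokes Lemma \ref{lm5.1}. The extra care you take with small $t$ and with absorbing the linear-in-$|z|$ bound into a quadratic one is routine bookkeeping that the paper leaves implicit.
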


\begin{proof}
Since $h(x, y, i)$ is bounded, there is a $C>0$ such that
$$\varphi^2(z, i)+\psi^2(z, i)\leq C(1+|z|^2).$$
The claim therefore follows directly from Lemma \ref{lm5.1}.
\end{proof}

Define
$\Upsilon(z,i)=\frac{2\bar c}{\bar a}\varphi(z,i)+\psi(z,i)$
and
$\bar\Upsilon(z)=\frac{2\bar c}{\bar a}\bar\varphi(z)+\bar\psi(z)$.
We have the following lemma.
\begin{lm}\label{lm5.2}
Let $\gamma_0=\dfrac12 \Big(\bar c\wedge \big(-\bar c+\frac{\bar b}{\bar a}h_1(\frac{\bar b}{\bar a}, 0)\big)\Big)>0.$ For any $H>\frac{\bar b}{\bar a}+1$,  there are $T>0$ and $\beta>0$ satisfying
\begin{equation}\label{e0-lm5.2}
\dfrac1{T}\int_0^{T}\bar\Upsilon(\bar Z_z(t))dt\geq\gamma_0, \,\text{ if }\, z=(x,y)\in\R^2_+\,\text{ satisfying }\, x\wedge y\leq\beta,  x\vee y\leq H.
\end{equation}
\end{lm}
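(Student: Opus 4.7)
The plan is to work with the time average $F_T(z) := \frac{1}{T}\int_0^T \bar\Upsilon(\bar Z_z(t))\,dt$ and exploit the fact that the only boundary critical points the trajectory could approach, namely $(0,0)$ and the $x$-axis equilibrium $x^\star$ of the logistic equation $\dot u = u(\bar a - \bar b u)$, both satisfy $\bar\Upsilon \geq 2\gamma_0$ by the very definition of $\gamma_0$ together with Assumption \ref{asp5.1}(ii). The useful perspective is the identity $\int_0^T \bar\Upsilon(\bar Z_z(t))\,dt = \frac{2\bar c}{\bar a}\log\frac{\bar X_z(T)}{\bar X_z(0)} + \log\frac{\bar Y_z(T)}{\bar Y_z(0)}$ on the interior, which comes from $\frac{d}{dt}\log \bar X_z = \bar\varphi$ and $\frac{d}{dt}\log \bar Y_z = \bar\psi$, though the actual argument proceeds in two stages: (i) show $F_T(z) \geq 3\gamma_0/2$ uniformly for $z$ in the boundary set $B := ([0,H]\times\{0\}) \cup (\{0\}\times[0,H])$ when $T$ is large; (ii) extend to $\{z \in [0,H]^2 : x \wedge y \leq \beta\}$ by a uniform-continuity argument.

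For stage (i), I split $B$ into its two axis pieces. On $\{0\}\times[0,H]$ the trajectory stays on the $y$-axis and satisfies $\dot v = -v(\bar c + \bar d v)$, which yields $v(t) \leq y_0 e^{-\bar c t}$; hence $v(t)\to 0$ uniformly in $y_0 \in [0,H]$, and by continuity of $\bar\Upsilon$ with $\bar\Upsilon(0,0) = \bar c \geq 2\gamma_0$, one gets $F_T \geq 3\gamma_0/2$ on this piece for $T$ large. On $[0,H]\times\{0\}$ the trajectory stays on the $x$-axis and obeys the logistic equation, with $u(t) \to x^\star$ when $u_0 > 0$ but $u \equiv 0$ when $u_0 = 0$; this convergence is not uniform as $u_0 \downarrow 0$. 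To handle this, I fix a threshold $\delta' > 0$ so small that $\bar\Upsilon(x,0) \geq 7\gamma_0/4$ for all $x \in [0,\delta']$, which is possible by continuity at $(0,0)$. Then for $u_0 \in [\delta', H]$ the convergence to $x^\star$ is uniform, so there is a common $T_1$ beyond which $\bar\Upsilon(u(t),0) \geq 7\gamma_0/4$. For $u_0 \in [0,\delta')$ the trajectory is monotone and remains in $[0,\delta']$ until its (possibly infinite) exit time, during which the integrand is already $\geq 7\gamma_0/4$; after the exit time, the previous case applies with the new initial value $\delta'$. Combining with the uniform bound $|\bar\Upsilon| \leq M$ on $[0,H]^2$, one finds a single $T_0$ with $F_T((x_0,0)) \geq 3\gamma_0/2$ for all $T \geq T_0$ and $x_0 \in [0,H]$.

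For stage (ii), continuous dependence of the ODE \eqref{ex2} on initial data makes $(z,t)\mapsto \bar Z_z(t)$ jointly continuous on $[0,H]^2\times[0,T]$, so $F_T$ is continuous on the compact set $[0,H]^2$ and hence uniformly continuous. Choose $\beta \in (0,1)$ such that $|z-z'| \leq \beta$ implies $|F_T(z) - F_T(z')| \leq \gamma_0/2$. For any $z = (x,y) \in [0,H]^2$ with $x \wedge y \leq \beta$, set $z' = (x,0)$ if $y \leq \beta$ and $z' = (0,y)$ otherwise; then $z' \in B$ and $|z-z'| \leq \beta$, so $F_T(z) \geq F_T(z') - \gamma_0/2 \geq \gamma_0$, which is \eqref{e0-lm5.2}.

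The main obstacle I anticipate is the non-uniform convergence of the logistic flow as $u_0 \to 0^+$: no single $T$ drives every trajectory close to $x^\star$. The threshold-splitting at $\delta'$ is what resolves this, since in the slow regime near the origin the integrand $\bar\Upsilon$ is already close to its favorable value $\bar c$, so no motion of the trajectory is needed; the monotonicity of $u$ ensures the trajectory cannot escape into a zone where $\bar\Upsilon$ might dip.
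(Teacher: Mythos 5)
Your proposal is correct and follows essentially the same route as the paper: it establishes the time-average bound separately on the two boundary axes, handling the non-uniform convergence near the origin on the $x$-axis by a small threshold together with monotonicity of the logistic flow (the paper's $\beta_1$ and $\bar t_x$ case analysis), and then extends to the region $x\wedge y\leq\beta$ by continuous dependence on initial data. The only cosmetic difference is your pointwise-after-$T_1$ formulation versus the paper's direct time-average estimates, which amount to the same bookkeeping.
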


\begin{proof}
Since $\lim\limits_{t\to\infty}\bar Z_{(0, y)}(t)\to (0,0), \ \forall  y\in\R_+$
and
\begin{equation}\label{e1-lm5.2}
\bar\Upsilon(0,0)=\frac{2\bar c}{\bar a}\bar\varphi(0,0)+\bar\psi(0,0)=\frac{2\bar c}{\bar a}\bar a- \bar c=\bar c\geq 2\gamma_0,
\end{equation}
there exists $T_1>0$ such that
\begin{equation}\label{e2-lm5.2}
\dfrac1t\int_0^t\bar\Upsilon(\bar Z_{(0, y)}(s))ds\geq\frac32\gamma_0\,\text{
for }\,t\geq T_1,\,y\in[0,H].
\end{equation}

By \eqref{e1-lm5.2} and the continuity of $\bar\Upsilon(\cdot)$, there exists $\beta_1\in (0, \frac{\bar b}{\bar a})$
such that
\begin{equation}\label{e3-lm5.2}
\bar\Upsilon(x,0)\geq \frac74\gamma_0,\text{ if } x\leq\beta_1.
\end{equation}
Since
$$\bar\Upsilon\left(\frac{\bar b}{\bar a},0\right)=\frac{2\bar c}{\bar a}\bar\varphi \left(\frac{\bar b}{\bar a},0\right)+\bar\psi \left(\frac{\bar b}{\bar a},0\right)=-\bar c+\frac{\bar b}{\bar a}h_1 \left(\frac{\bar b}{\bar a},0\right)\geq2\gamma_0$$
and
$$\lim\limits_{t\to\infty}\bar Z_{(x, 0)}(t)\to \left(\frac{\bar b}{\bar a},0\right), \ \forall  x>0,$$
there exists a $T_2>0$ such that
\begin{equation}\label{e5-lm5.2}
\dfrac1t\int_0^t\bar\Upsilon(\bar Z_{(x, 0)}(s))ds\geq\frac74\gamma_0\,\text{
for }\,t\geq T_2,\,x\in[\beta_1,H].
\end{equation}

Let $\bar M_H=\sup_{x\in[0,H]}\left\{|\Upsilon(x,0)|\right\}
$,
$\bar t_{x}=\inf\{t\geq0: X_{x,0}\geq\beta_1\}$
and $T_3=\left(4\frac{\bar M_H}{\gamma_0}+7\right)T_2$
it can be seen from the equation of $\bar X(t)$ that  $\bar X_{(x,0)}(t)\in[\beta_1,H]$
if $t\geq \bar t_x, x\in(0,\beta_1]$.
For $t\geq T_3$, we can use \eqref{e3-lm5.2} and \eqref{e5-lm5.2} to estimate $\frac1t\int_0^t\bar\Upsilon(\bar Z_{(x, 0)}(s))ds$ in
the following three cases.

{\bf Case 1}.  If $t-T_2\leq \bar t_x\leq t$
then
$$
\begin{aligned}
\int_0^t\bar\Upsilon(\bar Z_{(x, 0)}(s))ds
&=
\int_0^{\bar t_x}\bar\Upsilon(\bar Z_{(x, 0)}(s))ds
+\int_{\bar t_x}^t\bar\Upsilon(\bar Z_{(x, 0)}(s))ds\\
&\geq \frac74\gamma_0(t-T_2)-T_2\bar M_H\geq\frac32\gamma_0t,\,\,\bigg(\text{since }\, t\geq \Big(4\frac{\bar M_H}{\gamma_0}+7\Big)T_2\bigg).
\end{aligned}
$$

{\bf Case 2}.  If  $\bar t_x\leq t-T_2$
then
$$
\begin{aligned}
\int_0^t\bar\Upsilon(\bar Z_{(x, 0)}(s))ds
&=
\int_0^{\bar t_x}\bar\Upsilon(\bar Z_{(x, 0)}(s))ds
+\int_{\bar t_x}^t\bar\Upsilon(\bar Z_{(x, 0)}(s))ds\\
&\geq \frac74\gamma_0(t-\bar t_x)+\frac74\gamma_0\bar t_x\geq\frac32\gamma_0t.
\end{aligned}
$$

{\bf Case 3}.  If  $\bar t_x\geq t$
then
$$
\begin{aligned}
\int_0^t\bar\Upsilon(\bar Z_{(x, 0)}(s))ds
&=
\int_0^{\bar t_x}\bar\Upsilon(\bar Z_{(x, 0)}(s))ds
\geq \frac74\gamma_0\bar t_x\geq\frac32\gamma_0t.
\end{aligned}
$$

Thus, in any case, we have
\begin{equation}\label{e6-lm5.2}
\dfrac1t\int_0^t\bar\Upsilon(\bar Z_{(x, 0)}(s))ds\geq\frac32\gamma_0t,\,\text{ if } t\geq T_3, x\in(0,H].
\end{equation}
Let $T=T_1\vee T_3$.
By the continuous dependence
of solutions to initial values,
there is $\beta>0$
such that
\begin{equation}\label{e7-lm5.2}
\dfrac1T\int_0^T\left|\bar\Upsilon(\bar Z_{z_1}(s))-\bar\Upsilon(\bar Z_{z_2}(s))\right|ds\leq \frac12\gamma_0\,\text{ given that }\, |z_1-z_2|\leq\beta, z_1,z_2\in[0,H]^2.
\end{equation}
Combining \eqref{e2-lm5.2}, \eqref{e6-lm5.2} and \eqref{e7-lm5.2}
we obtain the desired result.
\end{proof}

Generalizing the techniques in \cite{DY},
we divide the proof of the eventual tightness into two lemmas.

\begin{lm}\label{lm5.3}
For any $\Delta>0$, there exist $\eps_0>0$, $\delta_0$,   a compact set $\mathcal K\subset\R^{2,\circ}_+$,
and $T>0$ such that
$$\liminf\limits_{k\to\infty}\sum_{n=0}^{k-1}\PP\left\{Z^{\eps,\delta}_{z_0, i_0}(nT)\in \mathcal K\right\}\geq 1-\dfrac\Delta3\, \text{ for any }\, \eps<\eps_0, \delta<\delta_0.$$
\end{lm}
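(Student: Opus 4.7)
The plan is to take $\mathcal K=[L^{-1},L]^2$ with $L$ chosen large, and to bound the Ces\`aro average of $\PP\{Z^{\eps,\delta}(nT)\notin\mathcal K\}$ by splitting the complement into a ``far'' region $\{|z|>L\}$ and a ``near-axis'' region $A_\beta:=\{(x,y):x\wedge y\le\beta,\ x\vee y\le L\}$. Here $T,\beta$ are produced by Lemma \ref{lm5.2} applied with $H=L$, and by enlarging $L$ we arrange $L^{-1}\le\beta$ so that $\mathcal K^c\cap\R^{2,\circ}_+\subset\{|z|>L\}\cup A_\beta$.

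For the far region, Lemma \ref{lm5.1} gives $\limsup_{t\to\infty}\E|Z^{\eps,\delta}_{z_0,i_0}(t)|^2\le K_2$ uniformly in $\eps,\delta\in(0,1)$. Markov's inequality followed by Ces\`aro averaging yields
$$\limsup_{k\to\infty}\frac{1}{k}\sum_{n=0}^{k-1}\PP\{|Z^{\eps,\delta}(nT)|>L\}\le\frac{K_2}{L^2}\le\frac{\Delta}{6}$$
once $L$ is large enough.

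For the near-axis region, I would use the Lyapunov candidate $U(x,y):=-\tfrac{2\bar c}{\bar a}\log x-\log y$. Since $U$ does not depend on $i$, a direct computation gives
$$\mathcal L^{\eps,\delta}U(z,i)=-\Upsilon(z,i)+\tfrac{\delta}{2}\Bigl(\tfrac{2\bar c}{\bar a}\lambda^2(i)+\rho^2(i)\Bigr),$$
so Dynkin's formula produces
$$\E U\bigl(Z^{\eps,\delta}_{z,i}(T)\bigr)-U(z)\le -\E\int_0^T\Upsilon\bigl(Z^{\eps,\delta}_{z,i}(s),\alpha^\eps(s)\bigr)\,ds+C\delta T.$$
For $z\in A_\beta$, Lemma \ref{lm2.2} implies that $Z^{\eps,\delta}_{z,i}(\cdot)$ is uniformly $\gamma$-close to $\bar Z_z(\cdot)$ on $[0,T]$ with probability at least $1-\exp(-k/(\eps+\delta))$; an averaging estimate for the rapidly switching chain $\alpha^\eps$ (in the exponential-martingale spirit of Section \ref{sec:3}) replaces $\Upsilon(\cdot,\alpha^\eps)$ by $\bar\Upsilon$ in expectation; and Lemma \ref{lm5.2} furnishes $\tfrac1T\int_0^T\bar\Upsilon(\bar Z_z(s))\,ds\ge\gamma_0$. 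Putting these together, for all small $\eps,\delta$,
$$\E U\bigl(Z^{\eps,\delta}_{z,i}(T)\bigr)-U(z)\le-\tfrac{\gamma_0 T}{2}\qquad\text{for every }z\in A_\beta.$$

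Finally, to convert this pointwise drift into the desired time-average statement, I would assemble a truncated Lyapunov function $V(z,i):=\bigl(\alpha_1\widehat V(z,i)+U(z)\bigr)\wedge M$, where $\widehat V$ and the bound $\mathcal L^{\eps,\delta}\widehat V\le\widehat K_1-\theta|z|^2$ come from the proof of Lemma \ref{lm5.1} and $\alpha_1>0$ is small, $M$ large, so that $V$ is bounded and
$$\E V\bigl(Z^{\eps,\delta}(T),\alpha^\eps(T)\bigr)-V(z,i)\le -\rho_0\,\mathbf{1}_{A_\beta\cup\{|z|>L\}}(z)+C_0\,\mathbf{1}_{\mathcal C}(z)$$
for a compact $\mathcal C\subset\mathcal K$ and constants $\rho_0,C_0>0$; the term $-\alpha_1\theta|z|^2$ kills the drift on $\{|z|>L\}$, the near-axis estimate above kills it on $A_\beta$, and the truncation absorbs the fluctuations of $U$. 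Telescoping $n=0,\dots,k-1$, dividing by $k$, using boundedness of $V$, and combining with the far-region bound gives $\liminf_{k\to\infty}\tfrac1k\sum_{n=0}^{k-1}\PP\{Z(nT)\notin\mathcal K\}\le \Delta/3$ after the parameters $L$, $\alpha_1$, $M$ are tuned to force $C_0/\rho_0$ small. The principal obstacle is the near-axis drift estimate: the pathwise approximation of $Z^{\eps,\delta}$ by $\bar Z$ (rate $\eps+\delta$), the ergodic averaging of $\alpha^\eps$ (rate $\eps$), and Lemma \ref{lm5.2} must be dovetailed uniformly in $(z,i)\in A_\beta\times\M$, while the $O(\delta)$ correction coming from the generator of the logarithmic Lyapunov must be kept below $\gamma_0/2$ so as not to destroy the restoring drift that saves us at the axes.
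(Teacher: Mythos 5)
Your overall skeleton (log-type Lyapunov function tied to $\Upsilon$, the averaged drift from Lemma \ref{lm5.2} near the axes, the moment bounds of Lemma \ref{lm5.1} for the far region, and a one-step/telescoping argument over intervals of length $T$) is the same as the paper's, but the step that converts the near-axis drift into the Ces\`aro bound is genuinely flawed. You truncate from \emph{above}, $V=(\alpha_1\widehat V+U)\wedge M$, and claim $\E V(Z^{\eps,\delta}_{z,i}(T))-V(z)\le-\rho_0$ for all $z$ in the near-axis strip $A_\beta$. This cannot hold: since $h$ is bounded, $\Upsilon$ is bounded on $A_\beta$, so $U$ decreases at a bounded rate per unit time; hence for $z$ with $U(z)\gg M+CT$ (points arbitrarily close to the axes, which $A_\beta$ contains) the process remains past the cap at time $T$ with overwhelming probability, $V(Z(T))=M=V(z)$, and the expected decrement is essentially $0$, not $-\rho_0$. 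This is exactly why the paper truncates in the opposite direction, $U=V\vee L_1$: the function stays unbounded near the axes (so the restoring drift \eqref{ex10} is still registered there), is constant $=L_1$ on the central region away from the axes, and is bounded \emph{below}, which is all the telescoping needs; the unboundedness above is then absorbed by the $L^1$-type exceptional-set estimate \eqref{ex11} built on Lemma \ref{lm5.1a}. Relatedly, even if your master drift inequality held, your final step needs $C_0/\rho_0$ small, and none of the parameters you list ($L,\alpha_1,M$) makes it so: on the central compact set $\mathcal C$ the quantity $\E U(Z(T))-U(z)$ is of order one (e.g.\ along phases of the limit cycle where $\bar\Upsilon<0$), so a Foster--Lyapunov bound with a fixed positive gain on $\mathcal C$ only yields a constant bound on the occupation of the bad set, not $\Delta/3$. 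The paper avoids this by arranging that the one-step increase of the truncated function on the good event is exactly $0$ on $D\setminus D_2$ (choice of $L_1\geq\sup_{D\setminus D_1}V+\Lambda T$, estimates \eqref{ex10a}, \eqref{ex14}, \eqref{ex16}), so the only increase comes from the exceptional set and the far region, both tunable via $\varsigma$ and $H$.

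Two further points. First, your claimed expected-drift bound $\E U(Z(T))-U(z)\le-\gamma_0T/2$ on $A_\beta$ is plausible but not proved as written: on the complement of the good event from Lemma \ref{lm2.2} and of the chain-averaging event, $\int_0^T\Upsilon$ can be of size $\int_0^T(1+|Z(s)|)ds$, and you need a Cauchy--Schwarz/H\"older argument with the second-moment bound (this is the role of \eqref{ex11} and Lemma \ref{lm5.1a} in the paper); you gesture at the ingredients but omit this control of the bad event inside the expectation. Second, your choice $\mathcal K=[L^{-1},L]^2$ with ``enlarge $L$ so that $L^{-1}\le\beta$'' is circular, because $\beta=\beta(H)$ with $H=L$ comes from Lemma \ref{lm5.2} and shrinks as $L$ grows; this one is harmless, since the lemma only asks for some compact $\mathcal K\subset\R^{2,\circ}_+$ and you may take $\mathcal K=\{(x,y):x\wedge y\ge\beta,\ x\vee y\le L\}$ (the paper's $D\setminus D_2$ plays this role), but the truncation direction and the $C_0/\rho_0$ issue above are real gaps that your argument, as proposed, does not close.
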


\begin{proof}
For any $\Delta>0$, let $H=H(\Delta)>\frac{\bar b}{\bar a}+1$ be chosen later and define
$D=\{(x, y): 0<x, y\leq H\}$.
Let $T>0$ and $\beta>0$ such that \eqref{e0-lm5.2} is satisfied
and $D_1=\{(x, y): 0<x, y\leq H, x\vee y<\beta\}\subset D$.
Define $V(x, y)=-\frac{2\bar c}{\bar a}\ln x-\ln y+ C$ where $C$ is a positive constant such that $V(z)\geq0\,\forall\, z\in D$. In view of the generalized It\^o  formula,
$$
\begin{aligned}
V(Z_{z, i}^{\eps,\delta}(t))-V(z)=&\int_0^t\left[-\Upsilon\big(Z_{z, i}^{\eps,\delta}(s), \alpha^\eps(s)\big)+\dfrac{\delta}2\left(\frac{2\bar c}{\bar a}\lambda^2(\alpha^\eps(s))+\rho^2(\alpha^\eps(s))\right)\right]ds\\
&-\frac{2\bar c}{\bar a}\int_0^t\sqrt{\delta}\lambda(\alpha^\eps(s))dW_1(s)-\int_0^t\sqrt{\delta}\rho(\alpha^\eps(s))dW_2(s).
\end{aligned}
$$
For $A\in\F$, using Holder's inequality and the Burkholder-Davis-Gundy inequality, we have	 	
\begin{equation}\label{ex11}
\begin{aligned}\E\Big(\1_{A}&\big|V(Z_{z, i}^{\eps,\delta}(T))-V(z)\big|\Big)\\
\leq&\left|\E\1_A\int_0^{T}\Upsilon\big(Z_{z, i}^{\eps,\delta}(t), \alpha^\eps(t)\big)dt\right|+\E\1_A\int_0^{T}\dfrac{\delta}2\left(\frac{2\bar c}{\bar a}\lambda^2(\alpha^\eps(t))+\rho^2(\alpha^\eps(t))\right)dt\\&+\frac{2\bar c}{\bar a}\E\1_A\left|\int_0^{T}\sqrt{\delta}\lambda(\alpha^\eps(t))dW_1(t)\right|+\E\1_A\left|\int_0^{T}\sqrt{\delta}\rho(\alpha^\eps(t))dW_2(t)\right|\\
\leq&T(\E\1_A)^{\frac12}\E\int_0^{T}\left[\Upsilon\big(Z_{z, i}^{\eps,\delta}(t), \alpha^\eps(t)\big)+\dfrac\delta2\left(\frac{2\bar c}{\bar a}\lambda^2(\alpha^\eps(t))+\rho^2(\alpha^\eps(t))\right)\right]dt\\
&+\delta\PP(A)\E\int_0^T\left(\frac{2\bar c}{\bar a}\lambda^2(\alpha^\eps(t))+\rho^2(\alpha^\eps(t))\right)dt\\
\leq&  K_4T(1+|z|)\sqrt{\PP(A)}
\end{aligned}
\end{equation}
where the last inequality follows from \eqref{lm5.1a} and the boundedness of $\rho(i)$ and $\lambda(i)$.
If $A=\Omega$, we have
\begin{equation}\label{ex11a}
\dfrac1{T}\E\Big(\big|V(Z_{z, i}^{\eps,\delta}(T))-V(z)\big|\Big)\leq  K_4(1+|z|).
\end{equation}
Let $\hat H_{T}>H$ such that $\bar X_z(t)\vee \bar Y_z(t)\leq \hat H_{T}$ for all $z\in[0,H]^2,\, 0\leq t\leq {T}$ and
 $$\bar{d}_H=\sup\left\{\left|\dfrac{\partial \bar\Upsilon}{\partial x}(x, y)\right|, \left|\dfrac{\partial \bar\Upsilon}{\partial y}(x, y)\right|: (x,y)\in\R^2_+, x\vee y\leq \hat H_{T}\right\}.$$
Lemma \ref{lm2.2} implies that there are $\delta_0,\eps_0$ such that if $\eps<\eps_0,\delta<\delta_0$,
\begin{equation}\label{ex5}
\PP\left\{|\bar X_z(t)-X_{z, i}^{\eps,\delta}(t)|+|\bar Y_z(t)-Y_{z, i}^{\eps,\delta}(t)|<1\wedge\dfrac{\gamma_0}{2\bar{d}_H}, \ \forall  t\in[0,{T}]\right\}>1-\dfrac\varsigma6, \ \forall \,z\in \bar D.
\end{equation}
On the other hand, if $|\bar X_z(t)-X_{z, i}^{\eps,\delta}(t)|+|\bar Y_z(t)-Y_{z, i}^{\eps,\delta}(t)|<1\wedge\dfrac{\gamma_0}{2\bar d_H}$, we have
\begin{equation}\label{ex6}
\begin{aligned}
\bigg|\dfrac{1}{T}\int_0^{T}&\Upsilon(Z_{z, i}^{\eps,\delta}(t), \alpha^\eps(t))dt
- \dfrac{1}{T}\int_0^{T}\bar\Upsilon(\bar Z_{z, i}(t))dt\bigg|\\
\leq&\dfrac1{T}\left|\int_0^{T}\Big(\bar\Upsilon(Z_{z, i}^{\eps,\delta}(t))-\bar\Upsilon(\bar Z_{z, i}(t))\Big)dt\right|\\
& +\dfrac1{T}\left|\int_0^{T}\Big(\Upsilon(Z_{z, i}^{\eps,\delta}(t), \alpha^\eps(t))-\bar\Upsilon(Z_{z, i}^{\eps,\delta}(t))\Big)dt\right|\\
\leq&\dfrac{\gamma_0}2+\dfrac{F_H}{T}\bigg|\int_0^{T}\sum_{j\in\M}\big(\1_{\{\alpha^\eps(t)=j\}}-v_j\big)dt\bigg|
\end{aligned}
\end{equation}
where
$F_H=\sup\{|\Upsilon(z, i)| i\in\M, z\in[0, K_{T}+1]^2\}.$
In view of \cite[Lemma 2.1]{HYZ},
\begin{equation}\label{ex7}
\E\bigg|\dfrac1T\int_0^{T}\sum_{j\in\M}\big(\1_{\{\alpha^\eps(t)=j\}}-v_j\big)dt\bigg|^2\leq \dfrac{\kappa}{T}\eps\text{ for some constant }\kappa>0.
\end{equation}	
On the one hand,
\begin{equation}\label{ex9}
\E\dfrac1{T}\left|\int_0^{T}\Big(-\frac{2\bar c}{\bar a}\lambda(\alpha^\eps(t))dW_1(t)+\rho(\alpha^\eps(t))dW_2(t)\Big)\right|^2\leq\frac{4\bar c^2}{\bar a^2}\lambda_M^2+\rho_M^2.
\end{equation}

Combining \eqref{e0-lm5.2}, \eqref{ex5}, \eqref{ex6}, \eqref{ex7}, and \eqref{ex9}, we can reselect $\eps_0$ and $\delta_0$ such that for $\eps<\eps_0,\delta<\delta_0$ we obtain \eqref{ex8}--\eqref{ex8a} as below.
\begin{equation}\label{ex8}
\PP\left\{\dfrac{-1}{T}\int_0^{T}\Upsilon(\alpha^\eps(t), Z_{z, i}^{\eps,\delta}(t))dt\leq-0.5\gamma_0\right\}\geq 1-\dfrac\varsigma3\forall\,z\in D_1, i\in\M,
\end{equation}
\begin{equation}\label{ex8b}
\PP\Big\{X_{z, i}^{\eps,\delta}(T)\vee Y_{z, i}^{\eps,\delta}(T)\leq H \mbox{ (or equivalently } Z_{z, i}^{\eps,\delta}(T)\in D)\Big\}\geq1-\dfrac\varsigma3
,\end{equation}
and
\begin{equation}\label{ex8a}
\PP\left\{\delta\vartheta+\dfrac{\sqrt{\delta}}{T}\left|\int_0^{T}\Big(\frac{2\bar c}{\bar a}\lambda(\alpha^\eps(t))dW_1(t)+\rho(\alpha^\eps(t))dW_2(t)\Big)dt\right|<0.25\gamma_0\right\}>1-\dfrac\varsigma3
\end{equation}
where
$\vartheta=\frac12\left(\frac{2\bar c}{\bar a}\lambda_M^2+\rho_M^2\right).$
Consequently, for any $(z, i)\in D_1\times\M$, there is a subset $\Omega_{z, i}^{\eps,\delta}\subset\Omega$ with $\PP(\Omega_{z, i}^{\eps,\delta})\geq1-\varsigma$
in which we have $Z_{z, i}^{\eps,\delta}(T)\in D$ and
\begin{equation}\label{ex10}
\begin{aligned}
\dfrac1{T}\big(V(Z_{z, i}^{\eps,\delta}({T}))-V(z)\big)\leq&\dfrac{-1}{T}\int_0^{T}\Upsilon(\alpha^\eps(t), Z_{z, i}^{\eps,\delta}(t))dt+\delta\vartheta\\&+\dfrac1{T}\Big|\int_0^{T}\sqrt{\delta}\Big(\frac{2\bar c}{\bar a}\lambda(\alpha^\eps(t))dW_1(t)+\rho(\alpha^\eps(t))dW_2(t)\Big)\Big|\\
\leq& -0.25\gamma_0
\end{aligned}
\end{equation}
On the other hand, we deduce from \eqref{ex11a} that for $z\in D$
\begin{equation}\label{ex10a}
\PP\left\{\dfrac1{T}\big(V(Z_{z, i}^{\eps,\delta}({T}))-V(z)\big)\leq\Lambda:=\frac{ K_4(1+2H)}\varsigma\right\}\geq 1-\varsigma
\end{equation}
Moreover, it also follows from \eqref{ex11a} that for $z\in D\setminus D_1$
$$\E V(Z_{z, i}^{\eps,\delta}({T})\leq \sup_{z\in D\setminus D_1}\big(V(z)+ K_4{T}|z|\big).$$
Let
\begin{equation}\label{ex10d}
L_1=\sup_{z\in D\setminus D_1}V(z)+\Lambda T,\,\text{ and } L_2=L_1+0.25\gamma_0.
\end{equation}
Define $D_2=\{(x, y)\in\R^{2,\circ}_+: (x, y)\in D, V(x, y)>L_2\}$.
 Define $U(z)=V(z)\vee L_1.$ It is clear that
\begin{equation}\label{ex12a}
U(z_2)-U(z_1)\leq|V(z_2)-V(z_1)| \text{ for any }z_1, z_2\in\R^{2\circ}_+.
\end{equation}
It follows from  \eqref{ex11}
that for any $\delta,\eps<1$ and $A\in\F$, $z\in D$ we have
\begin{equation}\label{ex12}
\dfrac1{T}\E\1_{A}\Big|V(Z_{z, i}^{\eps,\delta}(T))-V(z)\Big|\leq K_4(2H+1)\sqrt{\PP(A)}
.\end{equation}
Applying \eqref{ex12} and \eqref{ex12a} for $A=\Omega\setminus\Omega_{z, i}^{\eps,\delta}$
we have
\begin{equation}\label{ex12b}
\dfrac1{T}\E\1_{\Omega\setminus\Omega_{z, i}^{\eps,\delta}}\Big[U(Z_{z, i}^{\eps,\delta}(T))-U(z)\Big]\leq K_4(2H+1)\sqrt{\varsigma},\text{ if } z\in D_1.
\end{equation}
In view of \eqref{ex10}, for
 $z\in D_2\text{ we have }V\big(Z_{z, i}^{\eps,\delta}(T)\big)<V(z)-0.25\gamma_0T.$
 By the definition of $D_2$,
 we also have $L_1\leq V(z)-0.25\gamma_0T.$
Thus, for $z\in D_2$ and $\omega\in\Omega^{\eps,\delta}_{z, i}$, we have
$$U\big(Z_{z, i}^{\eps,\delta}(T)\big)=L_1\vee V\big(Z_{z, i}^{\eps,\delta}(T)\big)\leq V(z)-0.25\gamma_0T=U(z)-0.25\gamma_0T,$$
which implies
\begin{equation}\label{ex10c}
\dfrac1{T}\Big[\E\1_{\Omega_{z, i}^{\eps,\delta}}U(Z_{z, i}^{\eps,\delta}({T}))-\E\1_{\Omega_{z, i}^{\eps,\delta}}U(z)\Big]\leq-0.25\gamma_0\PP(\Omega_{z, i}^{\eps,\delta})\leq -0.25\gamma_0(1-\varsigma).
\end{equation}

Combining \eqref{ex12b} with \eqref{ex10c}, we obtain
\begin{equation}\label{ex13}
\dfrac1{T}\Big[\E U(Z_{z, i}^{\eps,\delta}({T}))-U(z)\Big]\leq-0.25\gamma_0(1-\varsigma)+ K_4(2H+1)\sqrt{\varsigma}, \ \forall  z\in D_2.
\end{equation}
For $z\in D_1\setminus D_2$,
 and $\omega\in\Omega_{z, i}^{\eps,\delta}$, we have from \eqref{ex10}  that $ V(Z_{z, i}^{\eps,\delta}({T}))\leq V(z)$, from which we derive
$U(Z_{z, i}^{\eps,\delta}({T}))=L_1\vee V(Z_{z, i}^{\eps,\delta}({T}))\leq U(z)=V(z)\vee L_1.$
Hence, for $z\in D_1\setminus D_2$, $U(Z_{z, i}^{\eps,\delta}({T}))-U(z)\leq 0$ in $\Omega_{z, i}^{\eps,\delta}$.
This and \eqref{ex12b} imply
\begin{equation}\label{ex14}
\dfrac1{T}\Big[\E U(Z_{z, i}^{\eps,\delta}({T}))-U(z)\Big]\leq K_4(2H+1)\sqrt{\varsigma}, \ \forall  z\in D_1\setminus D_2.
\end{equation}
If $z\in D\setminus D_1$, $U(z)=L_1$ and we have from \eqref{ex10a} and \eqref{ex10d} that
$$\PP\big\{U(Z_{z, i}^{\eps,\delta}({T}))=L_1\big\}= \PP\big\{V(Z_{z, i}^{\eps,\delta}({T}))\leq L_1\big\}\geq 1-\varsigma.$$
Thus
$$\PP\{U(Z_{z, i}^{\eps,\delta}({T}))=U(z)\}\geq 1-\varsigma.$$
Use \eqref{ex12} and \eqref{ex12a} again to arrive at
\begin{equation}\label{ex16}
\dfrac1{T}\Big[\E U(Z_{z, i}^{\eps,\delta}({T}))-U(z)\Big]\leq K_4(2H+1)\sqrt{\varsigma}, \ \forall \,z\in D\setminus D_1.
\end{equation}
On the other hand, we deduce from \eqref{ex11a} and \eqref{ex12a} that
\begin{equation}\label{ex17}
\dfrac1{T}\Big[\E U(Z_{z, i}^{\eps,\delta}({T}))-U(z)\Big]\leq  K_4(1+|z|), \ \forall  z\in\R^{2,\circ}_+.
\end{equation}
Now, choose  arbitrary $(z_0, i_0)$ in $\R^{2,\circ}_+\times\M$,
it follows from the Markov property that
$$
\begin{aligned}
\dfrac1{T}\Big[&\E U(Z_{z_0, i_0}^{\eps,\delta}((n+1){T}))-\E U(Z_{z_0, i_0}^{\eps,\delta}({nT}))\Big]\\
&=\sum_{i\in\M}\int_{\R^{2,\circ}_+}\dfrac1{T}\Big[\E U(Z_{z, i}^{\eps,\delta}({T}))-U(z)\Big]\PP\Big\{Z_{z_0, i_0}^{\eps,\delta}(n{T})\in dz, \alpha^\eps(t)=i\Big\}.
\end{aligned}
$$
Subsequently, using \eqref{ex13}, \eqref{ex14},  \eqref{ex16}, and \eqref{ex17}, we obtain
$$
\begin{aligned}
\dfrac1{T}\Big[\E& U(Z_{z_0, i_0}^{\eps,\delta}((n+1){T}))-\E U(Z_{z_0, i_0}^{\eps,\delta}({nT}))\Big]\\
\leq&-\big[0.25\gamma_0(1-\varsigma)- K_4(2H+1)\sqrt{\varsigma}\big]\PP\big\{Z_{z_0, i_0}^{\eps,\delta}(n{T})\in D_2\big\}\\
&  + K_4(2H+1)\sqrt{\varsigma}\PP\big\{Z_{z_0, i_0}^{\eps,\delta}(n{T})\in D\setminus D_2\big\}\\
&+  K_4	\E\1_{\{Z_{z_0, i_0}^{\eps,\delta}(n{T})\notin D\}}\big(1+|Z_{z_0, i_0}^{\eps,\delta}(n{T})|\big)\\
\leq&-0.25\gamma_0(1-\varsigma)\PP\big\{Z_{z_0, i_0}^{\eps,\delta}(n{T})\in D_2\big\}+ K_4(2H+1)\sqrt{\varsigma}\\
&+  K_4\PP\big\{Z_{z_0, i_0}^{\eps,\delta}(nT)\notin D\big\}\E\big(1+|Z_{z_0, i_0}^{\eps,\delta}(nT)|\big)
\end{aligned}
$$
%if
%$0.25\gamma_0(1-\varsigma)\leq 0.2\gamma_0.$
Note that
\bea \ad \liminf\limits_{k\to\infty}\dfrac1k\sum_{n=0}^{k-1}\dfrac1{T}\Big[\E U(Z_{z_0, i_0}^{\eps,\delta}((n+1){T}))-\E U(Z_{z_0, i_0}^{\eps,\delta}({nT}))\Big]\\
\aad \
=\liminf\limits_{k\to\infty}\dfrac1{kT}\E U(Z_{z_0, i_0}^{\eps,\delta}(k{T}))\geq0,
\eea which implies
\begin{equation}\label{ex17a}
\begin{aligned}
0.25\gamma_0(1-\varsigma)&\limsup\limits_{k\to\infty}\dfrac1k\sum_{n=0}^{k-1}\PP\big\{Z_{z_0, i_0}^{\eps,\delta}(n{T})\in D_2\big\}\\
\leq & K_4(2H+1)\sqrt{\varsigma}+ K_3\limsup\limits_{k\to\infty}\dfrac1k\sum_{n=1}^k\PP\big\{Z_{z_0, i_0}^{\eps,\delta}(nT)\notin D\big\}\E\big(1+|Z_{z_0, i_0}^{\eps,\delta}(nT)|\big).
\end{aligned}
\end{equation}
In view of Lemma \ref{lm5.1}, we can choose $H=H(\Delta)$ independent of $(z_0, i_0)$ such that
\begin{equation}\label{ex18}
\limsup\limits_{t\to\infty}\PP\big\{Z_{z_0, i_0}^{\eps,\delta}(t)\notin D\big\}\leq \limsup\limits_{t\to\infty}\dfrac{\E(|Z_{z_0, i_0}^{\eps,\delta}(t)|)}{H}\leq\dfrac{\Delta}{6}
\end{equation}
and
\bea\ad  K_4\limsup\limits_{t\to\infty}\PP\big\{Z_{z_0, i_0}^{\eps,\delta}(t)\notin D\big\}\E\big(1+|Z_{z_0, i_0}^{\eps,\delta}(t)|\big)\\
\aad \ \leq  K_4\limsup\limits_{t\to\infty}\dfrac{\Big[\E\big(1+|Z_{z_0, i_0}^{\eps,\delta}(t)|\big)\Big]^2}{H}\leq\dfrac{0.1\gamma_0}{6}\Delta.
\eea
Hence, we have
\begin{equation}\label{ex19}
 K_4\limsup\limits_{k\to\infty}\dfrac1k\sum_{n=1}^k\PP\big\{Z_{z_0, i_0}^{\eps,\delta}(nT)\notin D\big\}\E\big(1+|Z_{z_0, i_0}^{\eps,\delta}(nT)|\big)\leq\dfrac{0.1\gamma_0}{6}\Delta.
\end{equation}
Choose $\varsigma=\varsigma(H)>0$ such that $0.25\gamma_0(1-\varsigma)\geq0.2\gamma_0$ and $ K_4(2H+1)\sqrt{\varsigma}\leq\dfrac{0.1\gamma_0}{6}\Delta$
and let $\eps_0=\eps_0(\varsigma, H),\delta_0(\varsigma, H)$ such that \eqref{ex8}, \eqref{ex8b}, and \eqref{ex8a} hold.
Thus, it follows from \eqref{ex17a} and \eqref{ex19} that
\begin{equation}\label{ex20}
\limsup\limits_{k\to\infty}\dfrac1k\sum_{n=1}^k\PP\big\{Z_{z_0, i_0}^{\eps,\delta}(nT)\in D_2\big\}\leq \dfrac{\Delta}{6}.
\end{equation}
Therefore, it follows from \eqref{ex18} and \eqref{ex20} that for any $\eps<\eps_0, \delta<\delta_0$, we have
\begin{equation}\label{ex21}
\liminf\limits_{k\to\infty}\dfrac1k\sum_{n=1}^k\PP\big\{Z_{z_0, i_0}^{\eps,\delta}(nT)\in D\setminus D_2\big\}\geq1- \dfrac{\Delta}3.
\end{equation}
The lemma is proved by noting that the set $D\setminus D_2$ is a compact subset of $\R^{2,\circ}_+$.
\end{proof}

\begin{lm}\label{lm5.4}
There are $ L>1$, $\eps_1=\eps(\Delta)>0$, and $\delta_1=\delta_1(\Delta)>0$ such that as long as $0<\eps<\eps_1, 0<\delta<\delta_1$, we have
$$\liminf\limits_{T\to\infty}\dfrac1T\int_0^T\PP\{Z^{\eps,\delta}_{z_0, i_0}(t)\in [ L^{-1}, L]^2\}\geq 1-\Delta, \ \forall  (z_0, i_0)\in\R^{2,\circ}_+\times\M.$$
\end{lm}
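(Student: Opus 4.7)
My plan is to bootstrap the discrete-time Cesàro estimate furnished by Lemma~\ref{lm5.3} into the claimed continuous-time one, controlling excursions of $Z^{\eps,\delta}$ within each window $[nT,(n+1)T]$ via Lemma~\ref{lm2.2}. Fix $\Delta>0$ and apply Lemma~\ref{lm5.3} to obtain a compact $\mathcal K\subset\R^{2,\circ}_+$, a time $T>0$, and thresholds $\eps_0,\delta_0>0$ such that, for every $(z_0,i_0)\in\R^{2,\circ}_+\times\M$ and all $\eps<\eps_0,\delta<\delta_0$,
$$\liminf_{k\to\infty}\frac1k\sum_{n=0}^{k-1}\PP\{Z^{\eps,\delta}_{z_0,i_0}(nT)\in\mathcal K\}\geq 1-\Delta/3.$$
Because $\mathcal K$ sits in the open first quadrant and the averaged flow of \eqref{ex2} preserves $\R^{2,\circ}_+$, the set $\widetilde{\mathcal K}:=\{\bar Z_z(t):z\in\mathcal K,\,t\in[0,T]\}$ is again a compact subset of $\R^{2,\circ}_+$. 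I then fix $\gamma>0$ small and pick $L>1$ large enough so that the open $\gamma$-neighborhood of $\widetilde{\mathcal K}$ lies inside $[L^{-1},L]^2$.

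The next step is to invoke Lemma~\ref{lm2.2} with this $\gamma$, $T$, and any $R$ satisfying $\mathcal K\subset B_R$: there exists $k>0$ such that
$$\PP\left\{\sup_{t\in[0,T]}|Z^{\eps,\delta}_{z,i}(t)-\bar Z_z(t)|\geq\gamma\right\}<\exp\Big(-\frac{k}{\eps+\delta}\Big)$$
for every $z\in\mathcal K$ and $i\in\M$. Shrinking $\eps_0,\delta_0$ to $\eps_1,\delta_1$ so that the right-hand side is at most $\Delta/3$, the choice of $L$ forces $Z^{\eps,\delta}_{z,i}(s)\in[L^{-1},L]^2$ for all $s\in[0,T]$ with probability at least $1-\Delta/3$, uniformly in $(z,i)\in\mathcal K\times\M$. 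Conditioning on $(Z^{\eps,\delta}_{z_0,i_0}(nT),\alpha^\eps(nT))$ and using the strong Markov property then yields, for every $t\in[nT,(n+1)T]$,
$$\PP\{Z^{\eps,\delta}_{z_0,i_0}(t)\in[L^{-1},L]^2\}\geq(1-\Delta/3)\,\PP\{Z^{\eps,\delta}_{z_0,i_0}(nT)\in\mathcal K\}.$$

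Integrating this inequality over $[nT,(n+1)T]$, summing $n=0,\dots,k-1$, dividing by $kT$, and taking $\liminf_{k\to\infty}$ against the Cesàro bound from Lemma~\ref{lm5.3} gives
$$\liminf_{k\to\infty}\frac1{kT}\int_0^{kT}\PP\{Z^{\eps,\delta}_{z_0,i_0}(t)\in[L^{-1},L]^2\}\,dt\geq(1-\Delta/3)^2\geq 1-\Delta.$$
To upgrade from the subsequence $\{kT\}$ to arbitrary $S\to\infty$, I would write $S=k_S T+r_S$ with $r_S\in[0,T)$; since the integrand is bounded by $1$, the tail $\frac1S\int_{k_S T}^S(\cdot)\,dt$ is $O(T/S)\to0$ and $k_S T/S\to1$, so the continuous-time $\liminf$ agrees with the discrete one.

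The main technical point is the geometric step of choosing a single box $[L^{-1},L]^2$ that contains a uniform $\gamma$-tube around the deterministic orbits issuing from $\mathcal K$: this relies crucially on $\mathcal K=D\setminus D_2$ being compact \emph{inside} the open quadrant (as noted at the end of the proof of Lemma~\ref{lm5.3}) and on $\R^{2,\circ}_+$ being forward-invariant for \eqref{ex2}. Once that is established, the rest is routine Markov-property and Cesàro bookkeeping.
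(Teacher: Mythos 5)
Your proof is correct and shares the paper's overall skeleton: Lemma~\ref{lm5.3} supplies the discrete-time Ces\`aro mass on the compact set $\mathcal K=D\setminus D_2$, a uniform containment estimate over one window $[0,T]$ converts this into the continuous-time statement via the Markov property at the times $nT$, and the passage from horizons $kT$ to arbitrary horizons is routine. Where you genuinely differ is in how the key estimate $\PP\{Z^{\eps,\delta}_{z,i}(s)\in[L^{-1},L]^2\}\geq 1-\Delta/3$ for $z\in\mathcal K$, $i\in\M$, $s\in[0,T]$ is obtained: the paper gets it by a modification of the proof of \cite[Theorem 2.1]{JJ}, i.e.\ a direct stochastic estimate on \eqref{ex1} that does not involve the averaged system and needs no further shrinking of $\eps,\delta$, whereas you derive it from Lemma~\ref{lm2.2}, comparing the trajectory with the averaged flow of \eqref{ex2} started in $\mathcal K$ and choosing $L$ so that a $\gamma$-tube around the compact set $\widetilde{\mathcal K}$ of averaged orbits sits inside $[L^{-1},L]^2$. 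Your route is self-contained within the paper's own lemmas (and mirrors how Lemma~\ref{lm2.2} is already applied to this model in \eqref{ex5}), at the price of requiring $\eps,\delta$ small — which the statement of the lemma permits — and of the geometric point you correctly flag: $\gamma$ must be smaller than the distance from $\widetilde{\mathcal K}$ to the coordinate axes, which is positive because the axes are invariant for \eqref{ex2}, so orbits issued from $\mathcal K\subset\R^{2,\circ}_+$ remain in a compact subset of the open quadrant on $[0,T]$. The paper's citation-based estimate buys independence from the averaging machinery and from the smallness of the parameters; your version is arguably more transparent given the tools already established. The remaining bookkeeping (conditioning at the deterministic times $nT$, the bound $(1-\Delta/3)^2\geq 1-\Delta$, and the interpolation to arbitrary $S\to\infty$) matches the paper's argument.
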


\begin{proof}
Let $D$ and $T$ as in Lemma \ref{lm5.3}.
Since $D\setminus D_2$ is a compact set in $\R^{2,\circ}_+$, by a modification of the proof of \cite[Theorem 2.1]{JJ}, we can  show that there is a positive constant $ L>1$ such that
$\PP\{Z_{z, i}(t)\in [ L^{-1}, L]^2\}>1-\dfrac\Delta3, \ \forall  z\in D\setminus D_2, i\in M, 0\leq t\leq T$.
Hence, it follows from the Markov property of the solution that
\bea \ad\PP\{Z_{z_0, i_0}^{\eps, \delta}(t)\in[ L^{-1}, L]^2\}\\
\aad\quad \geq \Big(1-\dfrac{\Delta}3\Big)\PP\big\{Z_{z_0, i_0}^{\eps,\delta}(jT)\in D\setminus D_2\big\}\forall t\in[jT, jT+T].\eea
Consequently,
\begin{align*}
\liminf\limits_{k\to\infty}\dfrac1{kT}&\int_{0}^{kT}\PP\big\{Z_{z_0, i_0}^{\eps,\delta}(t)\in [ L^{-1}, L]^2\big\}\\&\geq \Big(1-\dfrac{\Delta}3\Big)\liminf\limits_{k\to\infty}\dfrac1{k}\sum_{j=0}^{k-1}\PP\big\{Z_{z_0, i_0}^{\eps,\delta}(jT)\in D\setminus D_2)\big\}\geq 1-\Delta,
\end{align*}
It is readily seen from this estimate that
$$\liminf\limits_{T\to\infty}\dfrac1{T}\int_{0}^{T}\PP\big\{Z_{z_0, i_0}^{\eps,\delta}(t)\in [ L^{-1}, L]^2\big\}dt\geq 1-\Delta.$$
\end{proof}
The conclusion of Lemma \ref{lm5.4}
 is sufficient for the existence of a unique invariant probability measure
 $\mu^{\eps,\delta}$ in $\R^{2,\circ}_+\times\M$ of $(Z^{\eps,\delta}(t), \alpha^\eps(t))$ (see \cite{LB} or \cite{MT}). Moreover, the empirical measure
 $\dfrac1t\int_0^t\PP\big\{Z^{\eps,\delta}_{z_0,i_0}(s)\in \cdot\big\}ds$
 converges weakly to the invariant measure $\mu^{\eps,\delta}$ as $t\to\infty$.
Applying Fatou's lemma to the above estimate yields
$$\mu^{\eps,\delta}([ L^{-1}, L]^2)\geq \Delta, \ \forall \,\eps<\eps_0, \delta<\delta_0.$$
The eventual tightness obtained above enables us to conclude the following result.

\begin{thm}
Under Assumption {\rm\ref{asp5.1}}, the process $(X^{\eps,\delta}(t), Y^{\eps,\delta}(t), \alpha^{\eps}(t))$ satisfying \eqref{ex1} has  a unique invariant probability measure $\mu^{\eps,\delta}$ for sufficiently small $\delta$ and $\eps$. Moreover the invariant measure $\mu^{\eps,\delta}$ converges weakly to the measure of the limit cycle of \eqref{ex2} as $\eps\to0$ in the sense
 of Theorem {\rm\ref{thm4.1}} if $\lim\limits_{\eps\to0}\dfrac\delta\eps=l\in(0,\infty]$. In case $\lim\limits_{\eps\to0}\dfrac\delta\eps=0$, to obtain the same conclusion, we need an additional condition that at each critical point $(x^*, y^*)$ of $(\bar\varphi(x, y), \bar\psi(x, y))$, there is $i^*\in\M$ such that either $\varphi(x^*, y^*, i^*)\ne0$ or $\psi(x^*, y^*, i^*)\ne0$.
\end{thm}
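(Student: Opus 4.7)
The proof is essentially a consolidation of the preparatory results in Lemmas~\ref{lm5.1}--\ref{lm5.4} with Theorem~\ref{thm4.1}. First, I would establish existence and uniqueness of $\mu^{\eps,\delta}$. Uniqueness follows because $(Z^{\eps,\delta}(t),\alpha^\eps(t))$ is a non-degenerate, strong Feller diffusion on $\R^{2,\circ}_+\times\M$ coupled to an irreducible chain. For existence, Lemma~\ref{lm5.4} gives a compact set $K_\Delta=[L^{-1},L]^2\subset\R^{2,\circ}_+$ on which the Ces\`aro time average of the transition probability exceeds $1-\Delta$ in the limit $t\to\infty$; the Krylov--Bogolyubov-style criterion of \cite{LB,MT} then produces an invariant probability measure, which is then unique. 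Applying Fatou's lemma to the same estimate gives $\mu^{\eps,\delta}(K_\Delta)\ge 1-\Delta$ for all $\eps<\eps_1,\ \delta<\delta_1$, i.e.\ the eventual tightness stated just before Lemma~\ref{lm5.1}.

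Next I would verify that Theorem~\ref{thm4.1} applies to \eqref{ex1}. Conditions (i)--(v) of Assumption~\ref{asp1} are immediate: local Lipschitz continuity of the coefficients is clear; the Lyapunov function $\hat V$ from Lemma~\ref{lm5.1} delivers (ii) after passing to $\Phi=\hat V+\hat V^2$; the existence of a unique limit cycle $\Gamma$, the finiteness of the set of critical points and the uniform attraction property (v) are exactly Assumption~\ref{asp5.1}(i). Condition (vi) is the point where I would replace genuine tightness by the eventual tightness proved in the previous paragraph. Inspecting the contradiction argument that precedes Theorem~\ref{thm4.1}, tightness is used only to fix an $R$ such that $\mu^{\eps,\delta}(B_R)>1-\vartheta/2$; under eventual tightness this inequality holds once $\eps$ and $\delta$ are small enough, a harmless restriction because the conclusion is a limit as $\eps\to 0$. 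Hence the Section~\ref{sec:4} contradiction argument yields $\lim_{\eps\to 0}\mu^{\eps,\delta}(N_{x^*})=0$ for every critical point $x^*$ of $(\bar\varphi,\bar\psi)$ that lies in $K_\Delta$.

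Finally I would dispatch the regime-dependent non-degeneracy hypotheses at each interior critical point $(x^*,y^*)\in K_\Delta$. The diffusion matrix of \eqref{ex1} is $\sigma(x,y,i)=\mathrm{diag}(\lambda(i)x,\rho(i)y)$ with $\lambda(i),\rho(i)>0$, so $\sigma(x^*,y^*,i)\neq 0$ for every $i\in\M$ at every interior critical point. This verifies Assumption~\ref{aspc1} when $l\in(0,\infty)$ and Assumption~\ref{aspc3} when $l=\infty$ without further work. In the degenerate regime $l=0$ the noise is too weak to force the exit, and Assumption~\ref{aspc2} asks for a drift non-degeneracy at each critical point, which is precisely the additional hypothesis imposed in the theorem. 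With the appropriate version of Assumption~\ref{aspc1}/\ref{aspc2}/\ref{aspc3} in hand, Theorem~\ref{thm4.1} yields $\mu^{\eps,\delta}\Rightarrow\mu^0$ in the stated sense.

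The main obstacle I anticipate is the bookkeeping around eventual tightness versus the stronger uniform tightness demanded by Assumption~\ref{asp1}(vi); one has to re-run the contradiction of Section~\ref{sec:4} while tracking that every choice of threshold on $\eps,\delta$ is consistent with choices made earlier. A subsidiary concern is that the boundary equilibria such as $(0,0)$ and $(\bar b/\bar a,0)$ (the latter being a saddle by Assumption~\ref{asp5.1}(ii)) sit outside $K_\Delta$ once $L$ is chosen large enough, and therefore do not enter the contradiction argument — so only the interior critical points, at which the non-degeneracy conditions are already verified, matter.
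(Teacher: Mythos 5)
Your proposal follows essentially the same route as the paper: use Lemmas \ref{lm5.1}--\ref{lm5.4} to get existence, uniqueness (from nondegeneracy of the noise and ergodicity of the chain, via \cite{LB,MT}) and the eventual tightness $\mu^{\eps,\delta}([L^{-1},L]^2)\ge 1-\Delta$ by Fatou, then observe that eventual tightness can replace Assumption \ref{asp1}(vi) in the Section \ref{sec:4} contradiction argument, and finally note that $\sigma(x,y,i)=\mathrm{diag}(\lambda(i)x,\rho(i)y)$ is nonzero at interior critical points, so Assumptions \ref{aspc1}/\ref{aspc3} hold automatically while case $l=0$ needs the stated drift condition. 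Your extra remarks (only interior critical points of $(\bar\varphi,\bar\psi)$ matter, boundary equilibria being handled by the confinement to $[L^{-1},L]^2$) are consistent with, and slightly more explicit than, the paper's own brief proof.
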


\newcommand{\xx}{x^{\eps,\delta}}
\newcommand{\yy}{y^{\eps,\delta}}

\section{An Example}
%{Numerical Examples}
\label{sec:6}
This section further demonstrates our results by providing an
%numerical
example.
We consider the following stochastic predator-prey model with Holling functional response in a switching regime
%\begin{equation}\label{nex1}
%\left\{\begin{array}{lll}d{x}(t)=\bigg[r(\alpha^\eps(t))x(t)\Big(1-\dfrac{x(t)}{K(\alpha^\eps(t)}\Big)-\dfrac{m(\alpha^\eps(t))x(t)y(t)}{a(\alpha^\eps(t))+b(\alpha^\eps(t))}\bigg]dt+\sqrt{\delta}\lambda(\alpha^\eps(t))x(t)dW_1(t)\\
%d{y}(t)=y(t)\Big[-d(\alpha^\eps(t))+\dfrac{e(\alpha^\eps(t))
%m(\alpha^\eps(t))x(t)}{a(\alpha^\eps(t))+b(\alpha^\eps(t))}-f(\alpha^\eps(t))y(t)\Big]dt+\sqrt{\delta}\rho(\alpha^\eps(t))x(t)dW_2(t).\end{array}\right.
%\end{equation}
\begin{equation}\label{nex1}
\left\{\begin{array}{ll} \disp d\xx (t)&\!\!\! \disp =\bigg[r(\alpha^\eps(t))\xx(t)\Big(1-\dfrac{\xx(t)}{K(\alpha^\eps(t)}\Big)-\dfrac{m(\alpha^\eps(t))\xx(t)\yy(t)}{a(\alpha^\eps(t))
+b(\alpha^\eps(t))}\bigg]dt\\
& \quad \ +\sqrt{\delta}\lambda(\alpha^\eps(t))\xx(t)dW_1(t)\\
\disp d\yy(t)&\!\!\! \disp =\yy(t)\bigg[-d(\alpha^\eps(t))+\dfrac{e(\alpha^\eps(t))
m(\alpha^\eps(t))\xx(t)}{a(\alpha^\eps(t))+b(\alpha^\eps(t))}-f(\alpha^\eps(t))\yy(t)\bigg]dt\\
&\quad \ +\sqrt{\delta}\rho(\alpha^\eps(t))\xx(t)dW_2(t).\end{array}\right.
\end{equation}
In \eqref{nex1}, $W_1$ and $W_2$ are two independent Brownian motions, $\alpha^\eps(t)$ is a Markov chain
 independent of the Brownian motions and having  state space $\M=\{1, 2\}$ and generator $Q/\eps$, where
$$Q =
\left( \begin{array}{rr}  -1 &  1 \\
 1  &  -1 \\ \end{array} \right),$$
$r(1)=0.9, r(2)=1.1, K(1)=4.737, K(2)=5.238, m(1)=1.2, m(2)=0.8, a(1)=a(2)=1, b(1)=b(2)=1, d(1)=0.85, d(2)=1.15, e(1)=1.5, e(2)=2, f(1)=0.03, f(2)=0.01, \lambda(1)=1, \lambda(2)=2, \rho(1)=3, \rho(2)=1$.
As $\eps$ and $\delta$ tend to 0, a solution of equation \eqref{nex1} converges  to the corresponding solution of
\begin{equation}\label{nex2}
\left\{\begin{array}{lll}\disp {d \over dt} {x}(t)=x(t)\Big(1-\dfrac{x(t)}{5}\Big)-\dfrac{x(t)y(t)}{1+x(t)},
\\
\disp {d \over dt} {y}(t)=y(t)\Big(-1-\dfrac{1.6x(t)}{1+x(t)}-0.02y(t)\Big)
\end{array}\right.
\end{equation}
on each finite time interval.
Using \cite[Theorem 2.6]{SR}, the solution of equation
\eqref{nex2} has a unique limit cycle attracting all positive solution except for
 $(x^*, y^*)=(1.836, 1.795)$, which is a unique equilibrium of \eqref{nex2}.
Moreover, it is easy to check that the drift
 \begin{equation}\label{nex4}
\left(\begin{array}{l}r(i)x(t)\Big(1-\dfrac{x(t)}{K(i)}\Big)-\dfrac{m(i)x(t)y(t)}{a(i)+b(i)}\\
y(t)\Big(-d(i)-\dfrac{e(i)m(i)x(t)}{a(i)+b(i)}\Big)\end{array}\right)
\end{equation}
does not vanish at $(1.836, 1.795)$. Thus, $\mu^{\eps,\delta}$ converges weakly to the stationary distribution of \eqref{nex2} concentrated on the limit cycle as $\eps\to0$.
We illustrate this convergence by some figures showing sample paths of $\eqref{nex1}$ with different values of $(\eps,\delta)$.

\begin{figure}[h]
\centering
\includegraphics[totalheight=2.2in,width=2.1in]{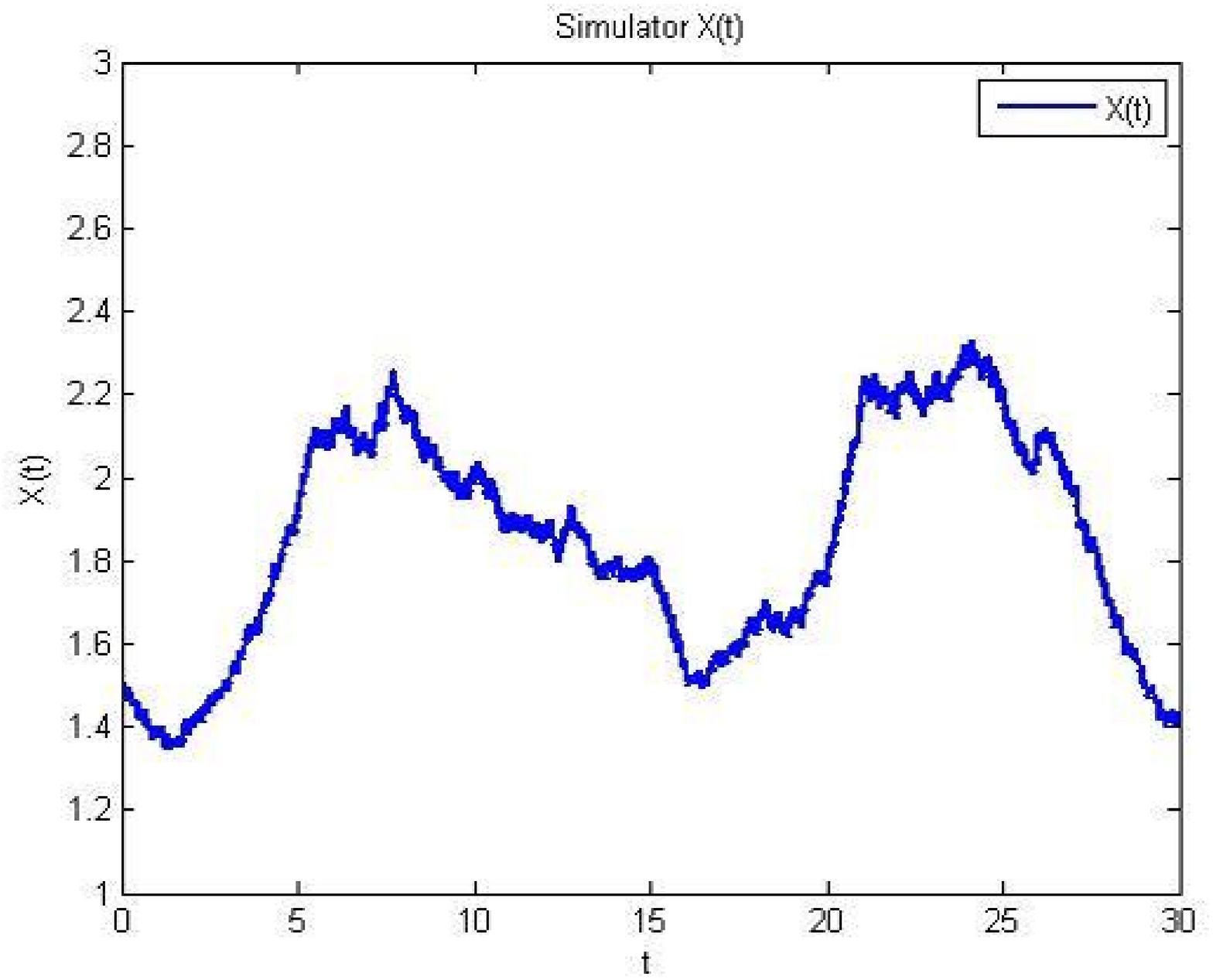}
\includegraphics[totalheight=2.2in,width=2.1in]{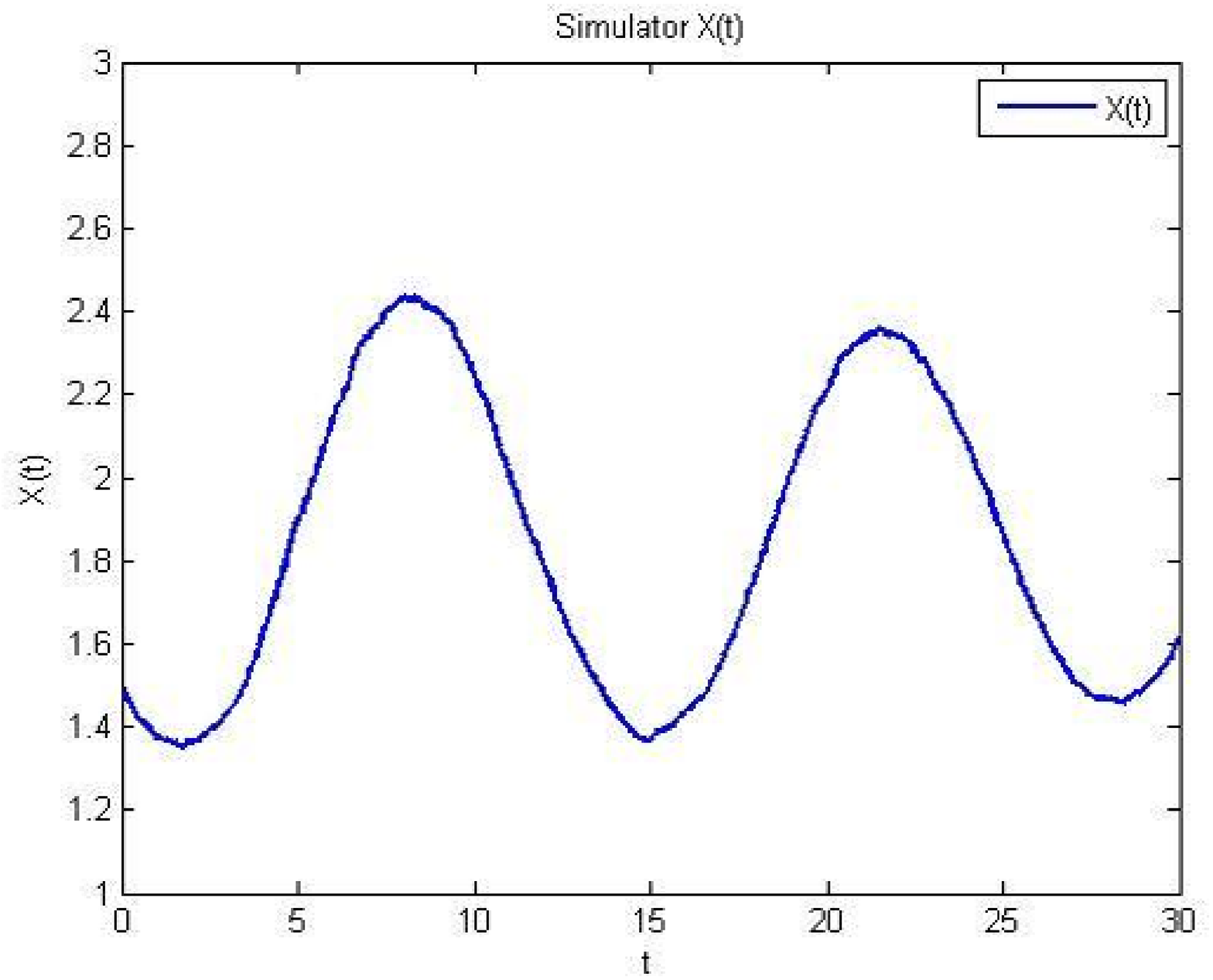}
\includegraphics[totalheight=2.2in,width=2.1in]{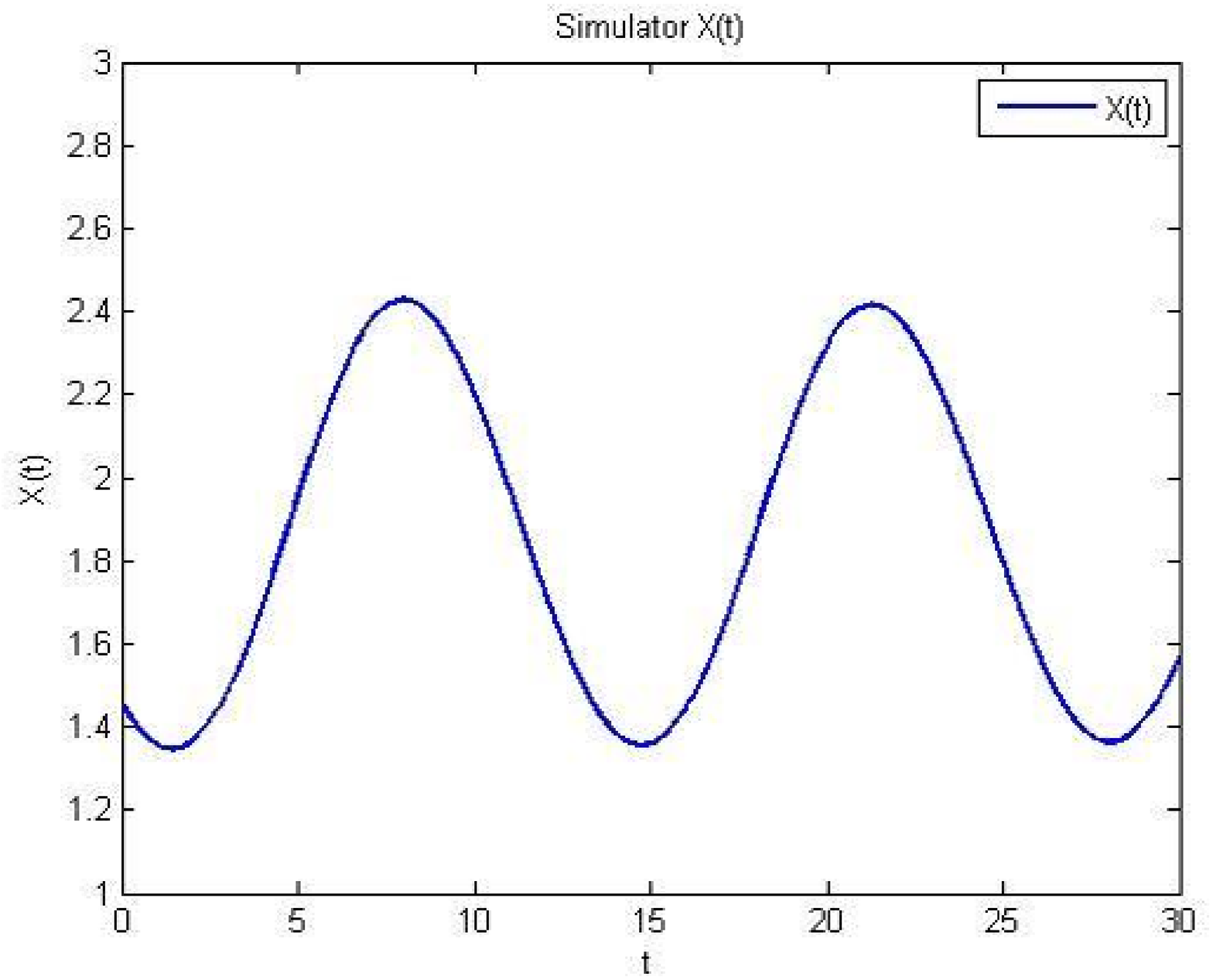}
\caption{From left to right: Graphs of $x^{\eps,\delta}(t)$ of solutions to Eq. \eqref{nex1} with $(\eps,\delta)=(0.01, 0.01)$, $(\eps,\delta)=(0.001, 0.001)$ and $x(t)$ to the averaged Eq. \eqref{nex2} respectively.}
\end{figure}

\begin{figure}[h]
\centering
\includegraphics[totalheight=2.2in,width=2.1in]{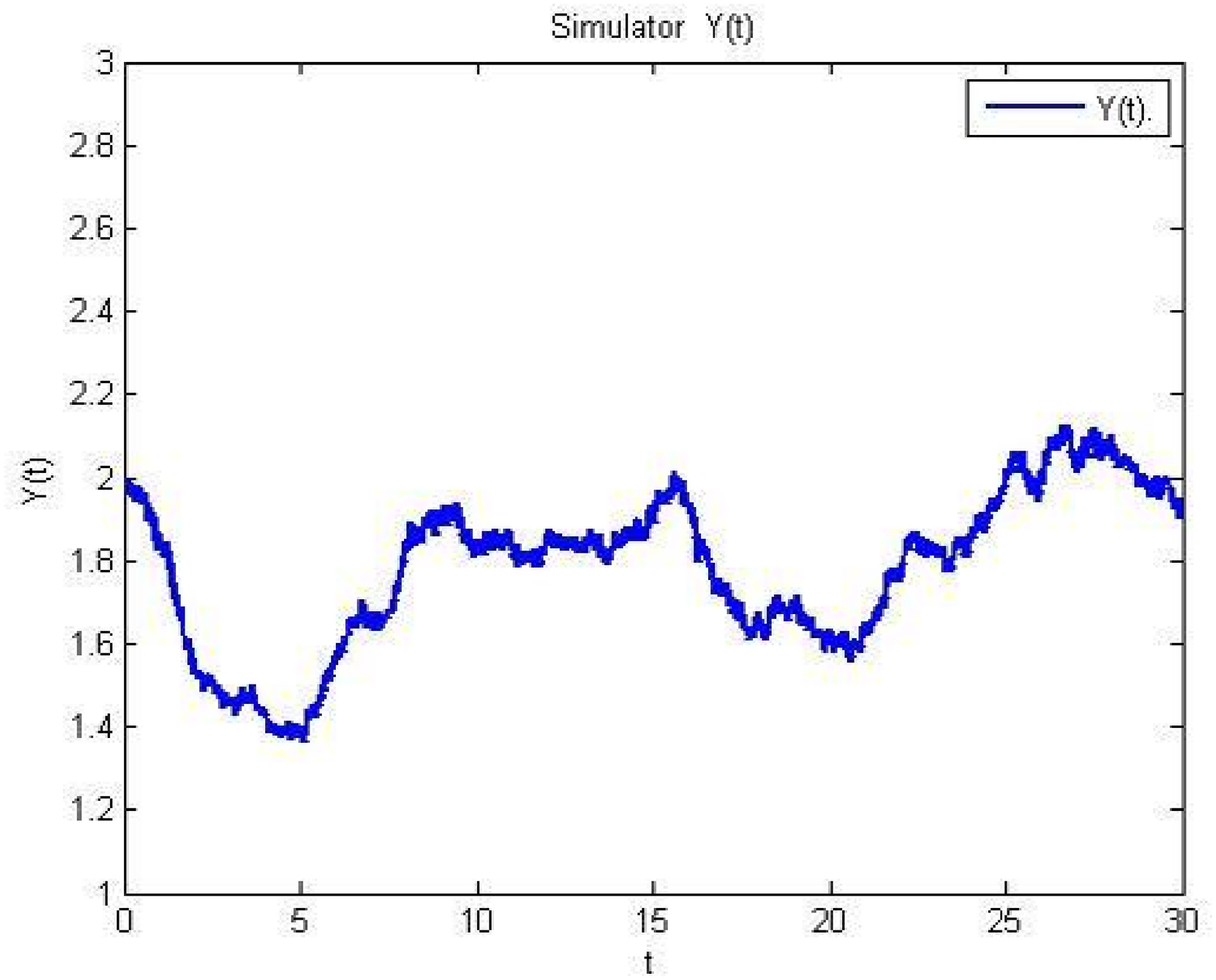}
\includegraphics[totalheight=2.2in,width=2.1in]{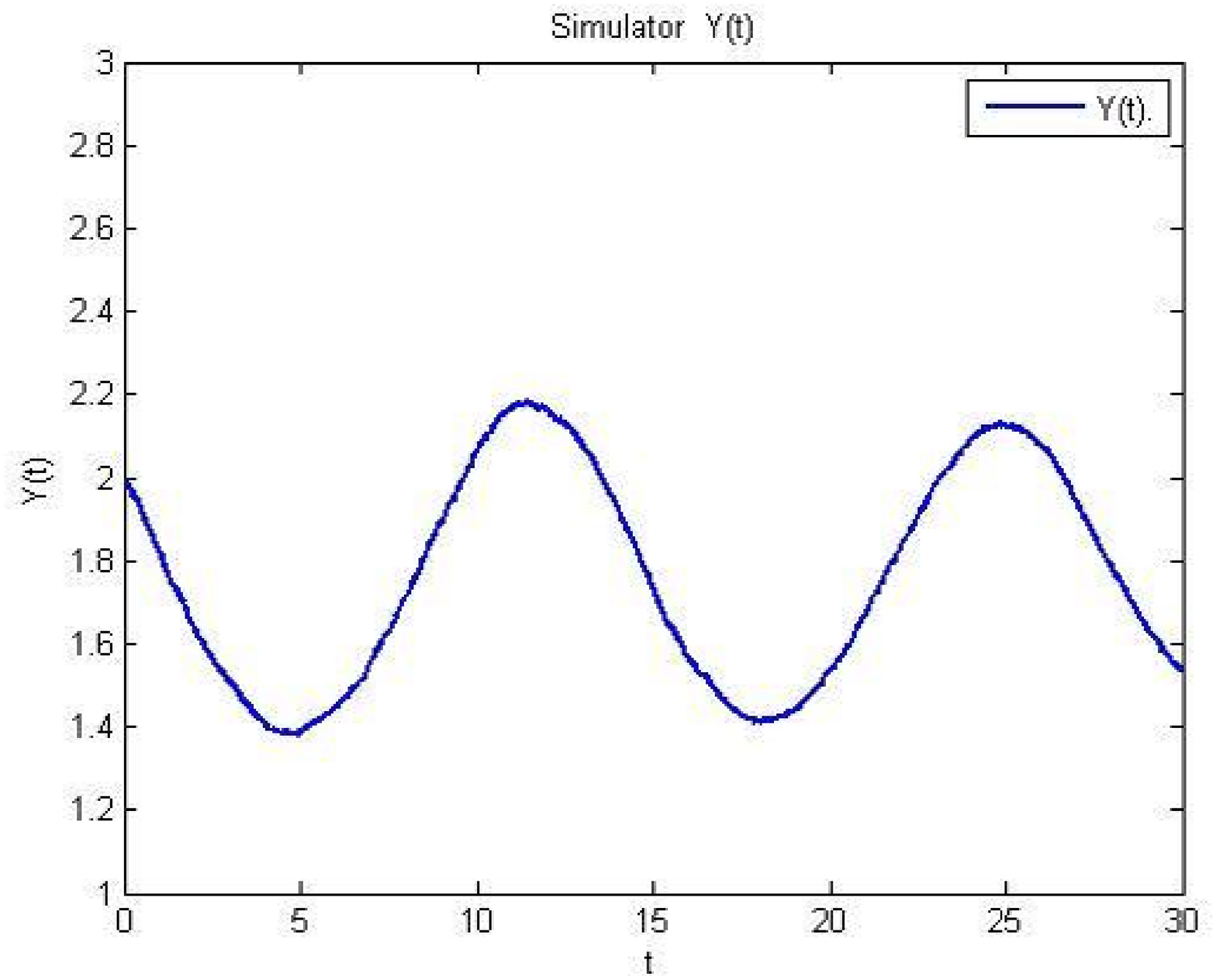}
\includegraphics[totalheight=2.2in,width=2.1in]{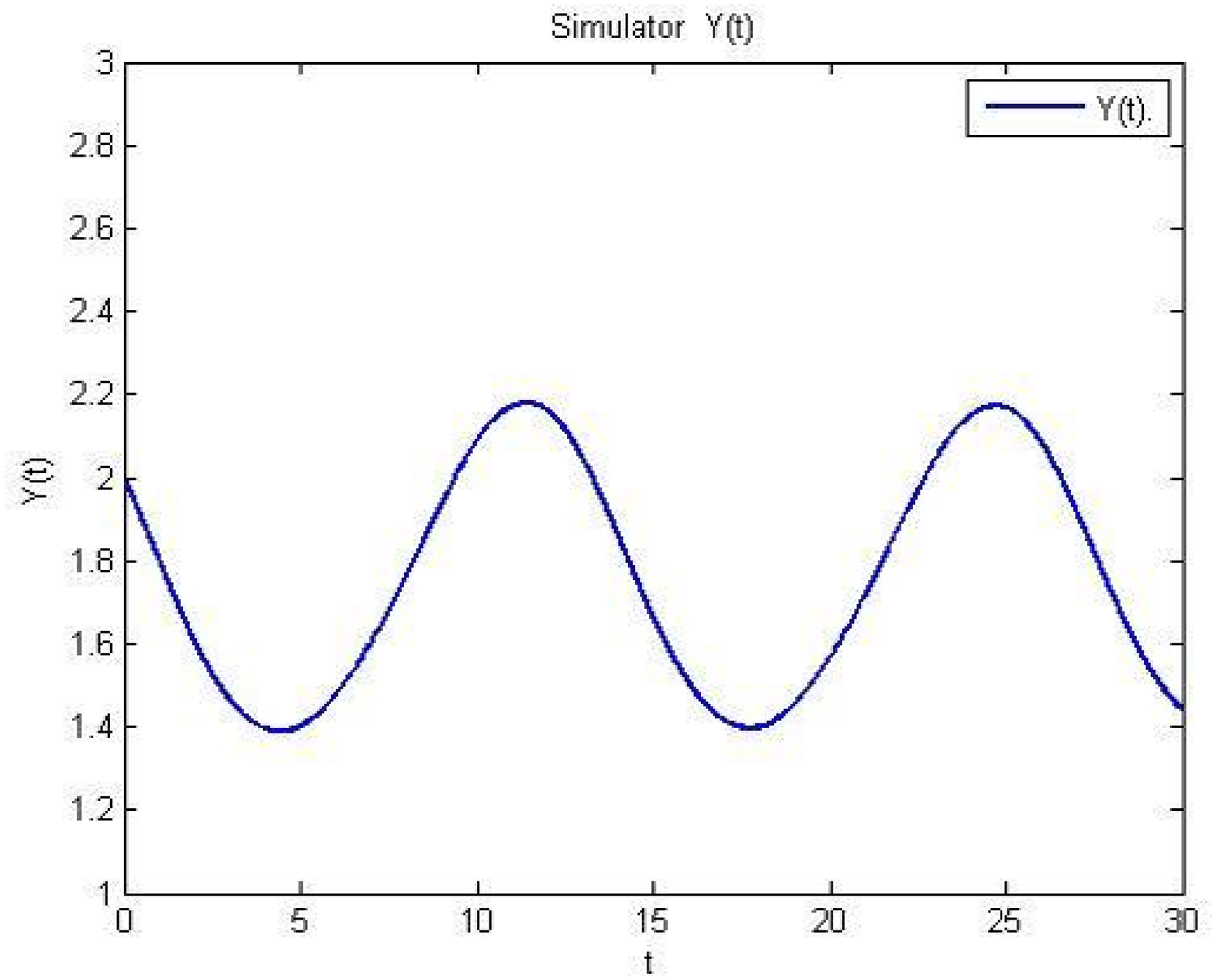}
\caption{From left to right: Graphs of $y^{\eps,\delta}(t)$ of solutions to Eq. \eqref{nex1} with $(\eps,\delta)=(0.01, 0.01)$, $(\eps,\delta)=(0.001, 0.001)$ and $y(t)$ to the averaged Eq. \eqref{nex2} respectively.}
\end{figure}

\begin{figure}[h]
\centering
\includegraphics[totalheight=2.2in,width=2.1in]{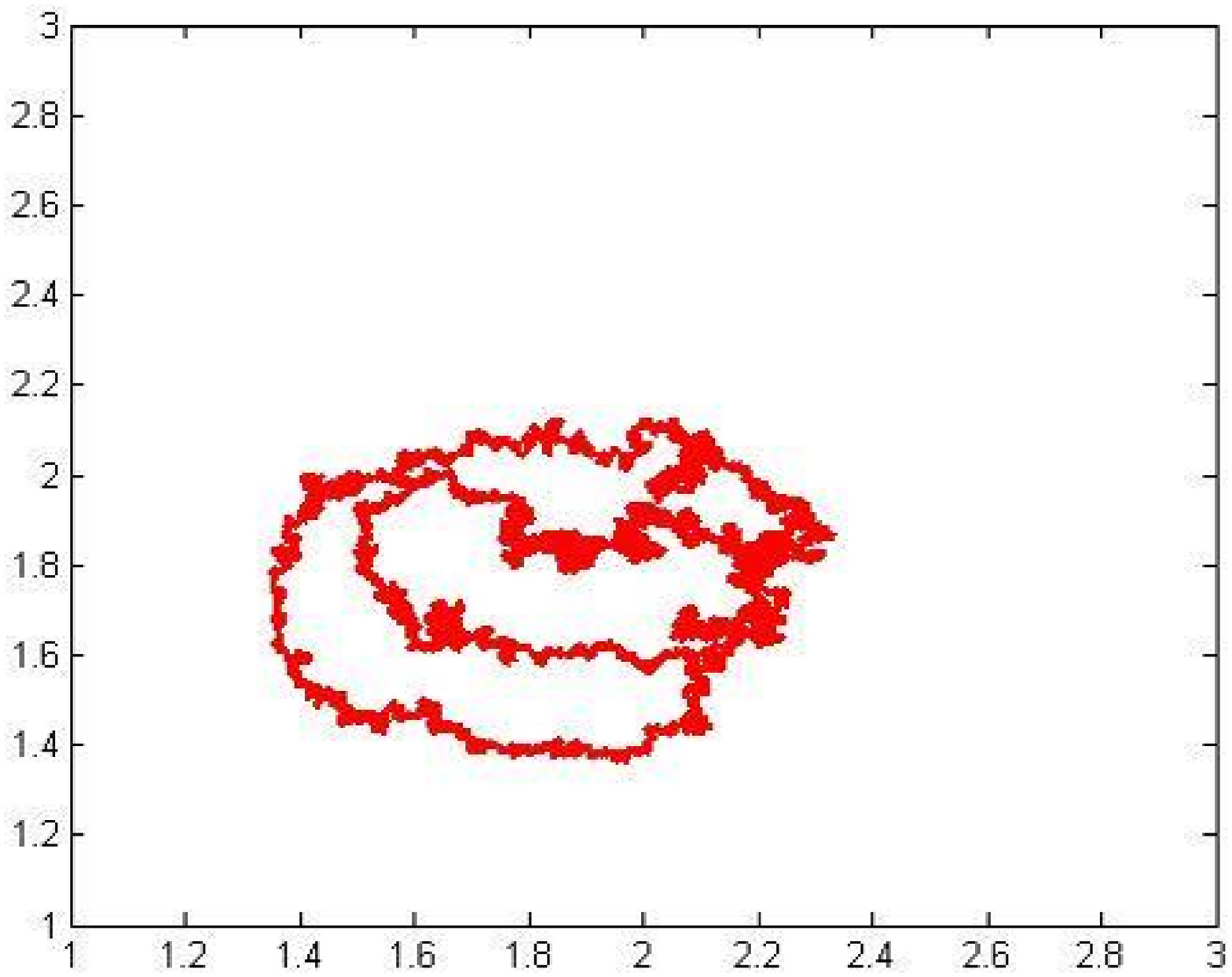}
\includegraphics[totalheight=2.2in,width=2.1in]{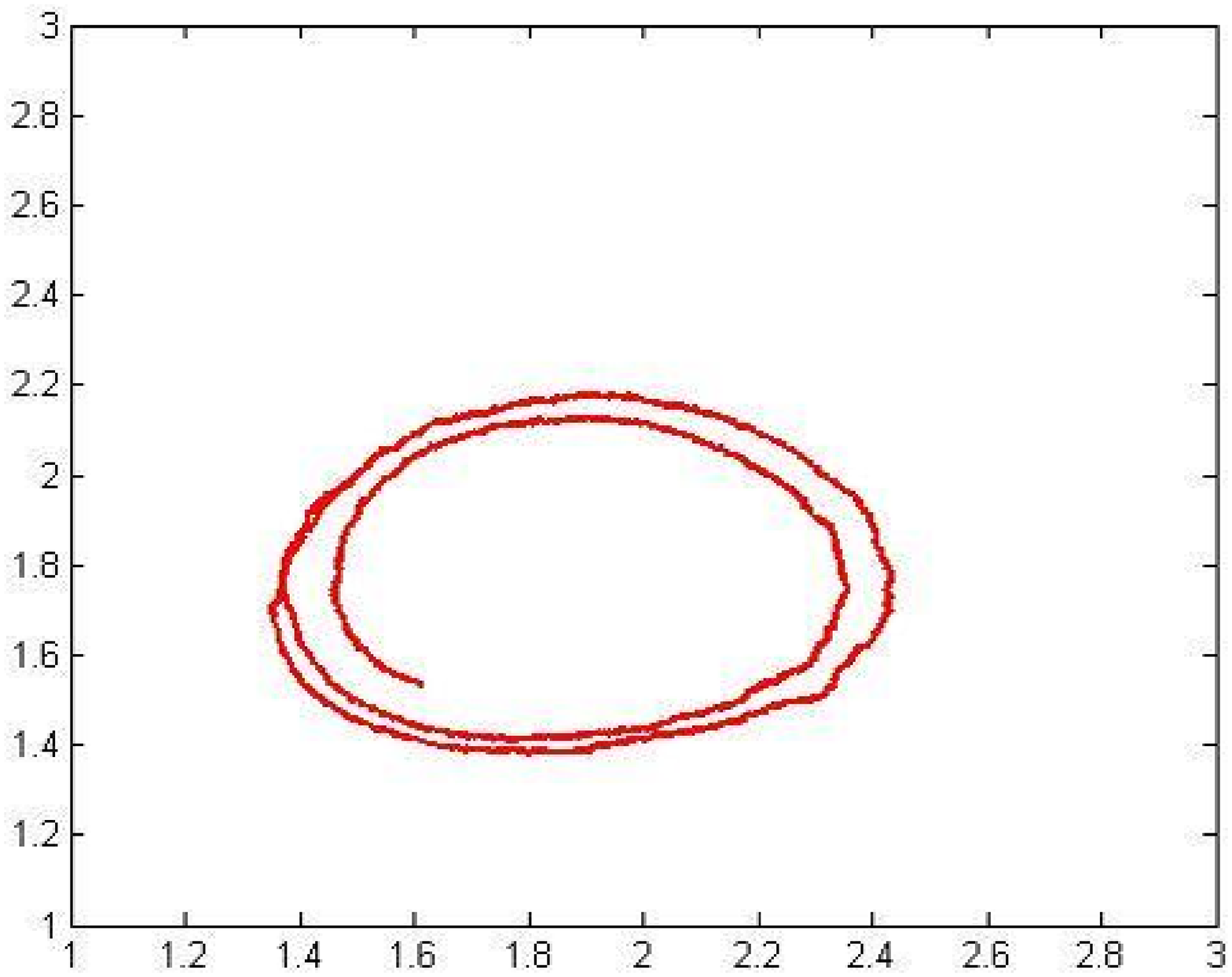}
\includegraphics[totalheight=2.2in,width=2.1in]{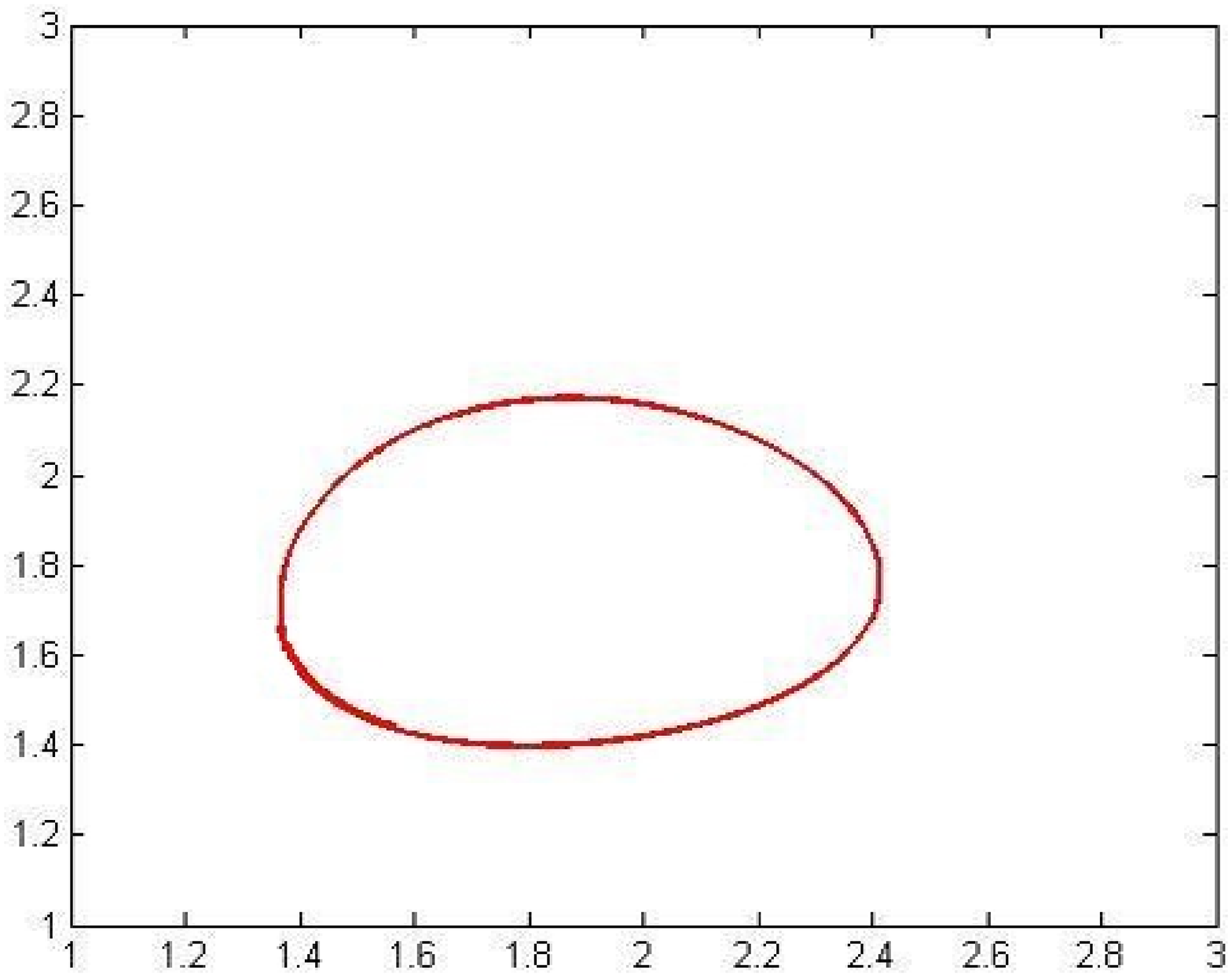}
\caption{From left to right: Phase portraits of sample solutions to Eq. \eqref{nex1} with $(\eps,\delta)=(0.01, 0.01)$, $(\eps,\delta)=(0.001, 0.001)$ and to the averaged Eq. \eqref{nex2} respectively.}
\end{figure}

\end{document}